\definecolor{bananamania}{rgb}{0.98, 0.91, 0.71}
\definecolor{indigo}{rgb}{0.0, 0.25, 0.42}
\definecolor{pistachio}{rgb}{0.58, 0.77, 0.45}
\definecolor{lightgrey}{rgb}{0.9, 0.9, 0.9}
\definecolor{cornflowerblue}{rgb}{0.39, 0.58, 0.93}
\newtcolorbox{box1}{colback=bananamania, colframe=black, fontupper=\color{indigo}}
\newtcolorbox{box2}{colback=blue!20!white, colframe=black}
\newtcolorbox{box3}{colback=pistachio, colframe=black, fontupper=\color{black}}
\newtcolorbox{cbox}{colback=lightgrey, colframe=black}
\newtcolorbox{rbox}{colback=red!75!green!50, colframe=black}
\theoremstyle{plain}
\newtheorem{theorem}{Theorem}[section]
\newtheorem{corollary}[theorem]{Corollary}
\newtheorem{lemma}[theorem]{Lemma}
\newtheorem{proposition}[theorem]{Proposition}
\newtheorem{definition}[theorem]{Definition}
\newtheorem{assumption}[theorem]{Assumption}
\newtheorem{problem}{Problem}
\theoremstyle{remark}
\def\@endtheorem{\hfill$\diamond$\endtrivlist\@endpefalse }
\newtheorem{remark}[theorem]{Remark}
\newcommand{\N}{\mathbb{N}}
\newcommand{\R}{\mathbb{R}}
\newcommand{\Z}{\mathbb{Z}}
\newcommand{\dd}{\mathop{}\!\mathrm{d}}
\newcommand\e{\varepsilon}
\newcommand\dist{\operatorname{dist}}
\newcommand\sym{\operatorname{sym}}
\newcommand{\SO}[1]{\operatorname{SO}(#1)}
\newcommand\per{{\operatorname{per}}}
\newcommand\II{{\operatorname{I\!I}}}
\newcommand\deform{v} 
\newcommand{\wto}{\rightharpoonup}
\DeclarePairedDelimiter{\abs}{\lvert}{\rvert}
\DeclarePairedDelimiter{\norm}{\lVert}{\rVert}
\def\eff{{\rm eff}}
\def\iso{{\rm iso}}
\def\cA{\mathcal{A}}
\def\cT{\mathcal{T}}
\def\cP{\mathcal{P}}
\def\cQ{\mathcal{Q}}
\def\cG{\mathcal{G}}
\def\cE{\mathcal{E}}
\def\cI{\mathcal{I}}
\def\cN{\mathcal{N}}
\def\cV{\mathcal{V}}
\def\cW{\mathcal{W}}
\def\pRed{\mathcal{P}_3^{\text{red}}}
\def\plateh{\mathsf{\eta}}
\newcommand{\Qhom}{Q_{\textup{hom}}^{\gamma} }
\newcommand{\bQhom}{\mathbf{Q}_{\textup{hom}}^{\gamma} }
\newcommand{\Qhomh}{Q_{\textup{hom}}^{\gamma,h} }
\newcommand{\Beff}{B_{\textup{eff}}^{\gamma} }
\newcommand{\Beffh}{B_{\textup{eff}}^{\gamma,h} }
\newcommand{\Beffc}{\widehat{B}_{\textup{eff}}}
\newcommand{\Qhomc}{\widehat{Q}}
\newcommand{\Bc}{\widehat{B}}
\newcommand{\dgrad}{\nabla_H} 
\newcommand{\dhess}{\nabla\nabla_H} 
\newcommand{\corr}{\nabla_\gamma \vartheta}  
\newcommand{\energy}{\cE^\gamma} 
\newcommand{\tenergy}{\widetilde{\cE}^\gamma} 
\newcommand{\dtenergy}{\widetilde{\cE}_H^{\gamma,h}} 
\newcommand{\lagint}{\cI_H^{1}} 
\newcommand{\macroquad}[1]{\cG_\dom^H\left[#1\right]} 
\newcommand{\microgrid}{\cT_h^\Box} 
\newcommand{\macrogrid}{\cT_H} 
\newcommand{\dom}{S} 
\newcommand{\mac}{s} 
\newcommand{\Rsym}[1]{\R^{#1\times#1}_{\operatorname{sym}} }
\title{Finite element discretizations of bending plates with prestrained microstructure}
\author{Klaus Böhnlein\footnote{\href{mailto:klaus.boehnlein@tu-dresden.de}{klaus.boehnlein@tu-dresden.de}},
Stefan Neukamm\footnote{\href{mailto:stefan.neukamm@tu-dresden.de}{stefan.neukamm@tu-dresden}},
and Oliver Sander\footnote{\href{mailto:oliver.sander@tu-dresden.de}{oliver.sander@tu-dresden.de}}}
\affil{Faculty of Mathematics, Technische Universit\"at Dresden}
\begin{document}

\maketitle

\begin{abstract}
  We investigate a finite element discretization of an elastic bending-plate model with an effective prestrain.
  The model has been obtained via homogenization and dimension reduction by \textcite{boehnlein2023homogenized}.
  Its energy functional is the $\Gamma$-limit of a three-dimensional nonlinear microstructured
  elasticity functional. In the derived effective model, the microstructure is incorporated
  as a local corrector problem,
  a system of linear elliptic partial differential equations posed on a three-dimensional
  representative volume element. 
  The discretization uses Discrete Kirchhoff Triangle elements for the macroscopic bending-plate problem
  on a mesh of scale $H$, and first-order Lagrange elements for the microscopic corrector problem on an axis-aligned mesh of scale $h$.
  We show that the discretized model $\Gamma$-converges to the continuous one as $(h,H)\to 0$,
  provided that there exists a microstructure mesh such that the
  elasticity tensor is Lipschitz continuous on each mesh element.
  This extends earlier results by \textcite{rumpf2023two} to prestrained composites.
  Our argument does not require any rate of convergence for the microscopic discretization error.
  As a corollary, we also obtain convergence when $h \to 0$ and $H \to 0$
  consecutively, and we prove that these limit processes commute.
\end{abstract}
\smallskip

\textbf{Keywords:} nonlinear elasticity, microstructure, bending plates, isometry constraint, prestrain, DKT elements, $\Gamma$-convergence

\smallskip

\textbf{MSC-2020:} 74B20 35B27 74Q05 74S05

\tableofcontents

\section{Introduction}

In this paper we investigate a finite element discretization of the nonlinear
bending model for prestrained, elastic plates derived in \cite{boehnlein2023homogenized}.
The model is given by the effective non-convex energy functional
\begin{equation}\label{eq:Energy_intro}
  \cE^\gamma:H^2_{\iso}(S;\R^3)\to\R,
  \qquad
  \cE^\gamma(\deform) \colonequals \int_S \Qhom\big(\mac,\II_\deform(\mac)-\Beff(\mac)\big)\dd\mac,
\end{equation}
where $S\subset\R^2$ denotes a Lipschitz domain representing the reference configuration of the plate, and
$\II_{\deform}$ is the second fundamental form associated with the deformation $\deform :S \to \R^3$.
The admissible space $H^2_{\iso}$ of bending deformations consists of all maps with square-integrable
second weak derivative that satisfy the local isometry constraint
\begin{equation}
  (\nabla\deform)^\top\nabla\deform=I_{2\times 2}
  \label{IsometryConstraint}
\end{equation}
almost everywhere. Note that this space is nonlinear, which is a central difficulty of this type of models.

In~\cite{boehnlein2023homogenized}, the energy~\eqref{eq:Energy_intro} is derived
as a $\Gamma$-limit from a three-dimensional, nonlinear elasticity model with a prestrained
locally periodic microstructure, described by a multiplicative decomposition of the deformation gradient into an elastic
and an inelastic part. In the limit of zero thickness
and zero periodicity length, the material properties are captured
by two objects: A space-dependent quadratic positive-definite form $\Qhom(\mac,\cdot):\Rsym2\to[0,\infty)$, $s \in S$,
called the effective bending stiffness, and a matrix field $\Beff : S \to \Rsym2$ called the
effective prestrain. These quantities can be computed from the specification
of the three-dimensional model by solving particular microstructure problems, which are essentially
linear elasticity problems for the microstructure on representative volume elements.

For various reasons it is of interest to understand the relationship
between the microstructure and the energy minimizers of the effective bending model.
Generally, the presence of an effective prestrain $\Beff$ leads to curved energy minimizers,
whose curvature and bending directions depend on the structure of the composite in intricate ways.
In \cite{boehnlein2023homogenized} this has been studied for plates with globally periodic
microstructure, and without boundary conditions. In this case, $\Qhom$ and $\Beff$ do not depend
on $s\in S$, and energy minimizers are cylindrical deformations. These deformations
can be computed exactly by minimizing
a smooth function on a two-dimensional set.

In the present paper we extend the scenario to composites with a space-dependent microstructure,
and we also take Dirichlet boundary conditions for the plate into account. In this case,
the reduction to a finite-dimensional minimization problem is impossible, and numerical
approximation is required. However, this is challenging for several reasons:

\begin{enumerate}[(a)]
  \item \label{item:challenge_H2}
  The minimization of \eqref{eq:Energy_intro} constitutes an $H^2$-conforming problem
  that features the nonlinear isometry constraint $(\nabla\deform)^\top\nabla\deform=I_{2\times 2}$.
  
  \item \label{item:challenge_bilevel}
  The problem has a bilevel structure: In addition to the discretization of \eqref{eq:Energy_intro}
  on a macroscopic scale $H$, one needs to discretize the microstructure problems
  on a second scale $h$ to obtain approximations for $\Qhom(s,\cdot)$ and $\Beff(s)$.
  
  \item \label{item:challenge_regularity}
  Typical composite materials exhibit discontinuous elastic moduli on the microscopic scale.
  Also, the effective quantities $\Qhom(s,\cdot)$ and $\Beff(s)$ can be discontinuous in $s$.
  Consequently, the convergence analysis can assume only low regularity.
\end{enumerate}

Regarding~\ref{item:challenge_H2}, various finite element techniques have been proposed to approximate
deformations in $H^2_\iso(S;\R^3)$, but none of them fully captures the geometric nonlinear constraint.
A truly conforming discretization would require an $H^2$-conforming finite element space
that satisfies the pointwise isometry constraint~\eqref{IsometryConstraint} everywhere. Unfortunately, such a space of discrete isometric deformations is currently unknown.
Requiring the isometry~\eqref{IsometryConstraint} in standard finite element spaces stiffens the discrete model,
and one therefore has to introduce nonconformities to achieve a controlled softening.
One way is to enforce the isometry~\eqref{IsometryConstraint} weakly via a penalty approach.
This has been proposed for a mixed finite element
discretization of a Reissner--Mindlin plate model based on MINI and Crouzeix--Raviart elements in~\cite{bartels2013finite}, and for a spline-based model in~\cite{mohan2021minimal}.
Discontinuous Galerking (DG) methods are an alternative way to soften the discrete model,
and have been used in~\cite{bonito2022ldg,bonito2023numerical,bonito2024gamma,bonito2020discontinuous,bartels2024error,bonito2021dg}.

In this paper, we apply a discretization introduced by \citeauthor{bartels2013approximation}~\cite{bartels2013approximation,bartels2022simulating}. It is based on the
Discrete Kirchhoff Triangle (DKT) finite element~\cite{braessfem,batoz1980study},
and it enforces the isometry constraint exactly, but only at the nodes of the mesh.
It was used originally for bending plates featuring a flat reference configuration~\cite{bartels2013approximation},
but was extended by \textcite{rumpf2022finite} to the case of thin elastic shells
described by parametrized surfaces.
Extensions to bilayer plate models with a given fixed spontaneous curvature tensor
were studied in \cite{bonito2024gamma,bartels2017bilayer,palus2024finite}. Several works
\cite{bartels2020finite,bartels2022stable,mohan2021minimal} focused on
homogeneous and isotropic material layers in particular, which are a special case of the
non-homogeneous and non-isotropic materials of this paper.

Various authors have noted that the second fundamental form $\II_\deform$ that appears
in bending-plate models can be replaced by the deformation Hessian $\nabla^2 \deform$
if the deformations are constrained to be isometries
\cite{bartels2022stable,bartels2017bilayer,rumpf2023two,mohan2021minimal,bonito2024gamma}.
This simplifies the model, because a nonlinear quantity is replaced by a linear one.
We show that such a reformulation is possible even in the presence of prestrain,
and that it leads to replacing the effective prestrain $\Beff$ by
the product $n_{\deform}\otimes\Beff$ with the surface normal $n_\deform$.
We observe similar theoretical and numerical advantages as for the reformulated problem
without prestrain.

With regard to~\ref{item:challenge_bilevel}, the bilevel nature of the problem,
convergence of the model without prestrain has been shown by \textcite{rumpf2023two} under additional regularity assumptions on the microstructure.
Similar to our approach, their fully discrete model combines a DKT-discretization of
the macroscopic problem with a first-order Lagrange finite element discretization
of the microscopic problem, and they show $\Gamma$-convergence of the resulting fully discrete model
to the continuous energy as the microscopic- and macroscopic discretization parameters $h$ and $H$ tend to zero.
Their analysis is based on abstract error analysis for heterogeneous multiscale methods
in~\cite{ming2005analysis}, and crucially requires a linear rate of convergence
in the microscopic discretization parameter $h$. The latter needs $H^2$-regularity
of the microscopic linear elasticity problem and thus in \cite{rumpf2023two} Lipschitz-regularity
of the elastic moduli on the microscale is required. This rules out
applying their theory to many typical composites, which frequently involve jumping coefficients.

To deal with irregular composites as described in~\ref{item:challenge_regularity},
the proof of $\Gamma$-convergence of the fully discrete model we present in this paper
only requires qualitative convergence of the
microscopic problem as $h\to 0$, and it does not depend on any specific features of the
microstructure discretization. This makes the proof flexible in the sense that, besides
the first-order finite element discretization on axis-aligned grids used in this paper,
it also allows other discretizations such as spectral or particle methods for the microstructure
problems. Global convergence always follows as long as the microstructure discretization
converges (with or without a rate).

Our generalization allows us to treat composites that are discontinuous and locally Lip\-schitz
on the level of the elements of the mesh on scale $h$. To obtain the required qualitative
convergence of the scheme as $h\to 0$, we use $\Gamma$-convergence methods
which do not need $H^2$-regularity.
We also make no assumption of homogeneity in the out-of-plane direction of the material
in the original three-dimensional model. In particular, microstructured multilayer plates are covered.
This is important for applications such as simulating self-shaping wooden bilayers \cite{boehnlein2024}, as plates whose material and prestrain are homogeneous
in the out-of-plane direction only lead to flat equilibrium shapes.

The paper is organized as follows. In Section~\ref{sec:model} we recall the underlying
three-dimensional model and its limit under simultaneous homogenization and dimension reduction.
We explain the effective bending stiffness and prestrain, and the corrector problems
used to define them. We also present the previously mentioned reformulation
of the prestrain energy functional in terms of the Hessian.
In Section~\ref{sec:MicroNumerics} we introduce a discretization of the microscopic corrector problem
and prove its $\Gamma$-convergence.
In Section~\ref{sec:MacroDiscretization} we first recall DKT-elements and introduce the fully discrete model. Our main result is Theorem~\ref{T:main}, which proves
$\Gamma$-convergence of this fully discrete model based only on very weak assumptions
regarding the microstructure discretizations. As corollaries we obtain that
the discretizations also converge under the individual limits $h \to 0$ and $H \to 0$,
and that these consecutive limit processes commute.

\section{The effective bending model}
\label{sec:model}

We briefly describe the initial three-dimensional microstructured and prestrained nonlinear
elasticity model, and the derived effective two-dimensional bending plate.
See \cite{boehnlein2023homogenized} for more details.

A key property of the three-dimensional nonlinear model will be that the linearized material behavior is given,
for every macroscopic point $s \in S$ and microstructure point $y$
in the representative volume element $\Box$, by an elasticity tensor
$\mathbb L(s,y)\in\operatorname{Lin}(\R^{3\times 3};\R^{3\times 3})$,
which only acts on the symmetric part of its argument and is itself symmetric in the sense that
\begin{equation}
  \label{eq:L_symmetry}
  \big\langle \mathbb L(s,y) G, F \big\rangle
  =
  \big\langle \mathbb L(s,y) F, G \big\rangle
  =
  \big\langle \mathbb L(s,y)\sym F, \sym G \big\rangle
  \qquad
  \forall F,G\in\R^{3\times 3}.
\end{equation}
Here $\langle F,G \rangle \colonequals \sum_{i,j=1}^3F_{ij}G_{ij}$ denotes the standard
Frobenius inner product on matrices, with corresponding matrix norm $\abs{\cdot}$.
As we explain below, the effective coefficients of the derived two-dimensional model only depend on $\mathbb L$.
The crucial feature is that we assume that $\mathbb L$ is uniformly elliptic in the sense that
there exists a constant $c_0 > 0$ with
\begin{equation}
  \forall F,G\in\R^{3\times 3}\,:\qquad
  \big\langle \mathbb L(s,y)F, G \big\rangle
  \leq
  c_0\abs{\sym F} \abs{\sym G},
  \qquad
  \big \langle \mathbb L(s,y)G, G \big \rangle
  \geq
  \frac1{c_0}\abs{\sym G}^2 \label{eq:Lbounds}
\end{equation}
for all $s\in S$ and $y\in\Box$.

\subsection{The three-dimensional model}
\label{sec:3d_model}

Consider a three-dimensional nonlinear elasticity model on a cylindrical domain
of thickness $\plateh$ that describes a composite material that is locally in-plane periodic
on a length scale $\e$. More precisely,
let $\Omega_\plateh\colonequals S\times(-\frac \plateh2,\frac \plateh2)$,
where $S\subseteq\R^2$ is an open, bounded, connected Lipschitz domain,
and consider the elastic energy functional
\begin{equation}\label{def:ene}
  \mathcal E^{\e,\plateh}_\text{3d}(\deform)\colonequals
  \int_{\Omega_\plateh} W\Big(\mac,\{\tfrac{\mac}{\e}\},\tfrac{x_3}{\plateh},\nabla \deform\big(I_{3\times 3}-\plateh B(\mac,\{\tfrac{s}{\e}\},\tfrac{x_3}{\plateh})\big)\Big)\dd(\mac,x_3),
\end{equation}
where $s \colonequals (s_1,s_2)\in S$ and $x_3\in (-\frac \plateh2,\frac \plateh2)$
denote the in-plane and out-of-plane coordinates, respectively.
The map $\deform:\Omega_\plateh\to\R^3$ denotes a three-dimensional deformation
of $\Omega_\plateh$.
For $y' \in \R^2$, we define $\{y'\} \colonequals ( \{y'_1\}, \{y'_2\} ) \in [0,1)^2$ as the vector of fractional parts of the components of $y'$, i.e., $\{y'_i\} \colonequals y'_i - \lfloor y'_i \rfloor$ for $i=1,2$.
This induces a periodic continuation of $W$.

\begin{figure}
  \begin{center}
    \begin{tikzpicture}[scale=1.2,>=stealth]
      \node[anchor=south west] at (0.0,0) {\includegraphics[width=0.6\textwidth]{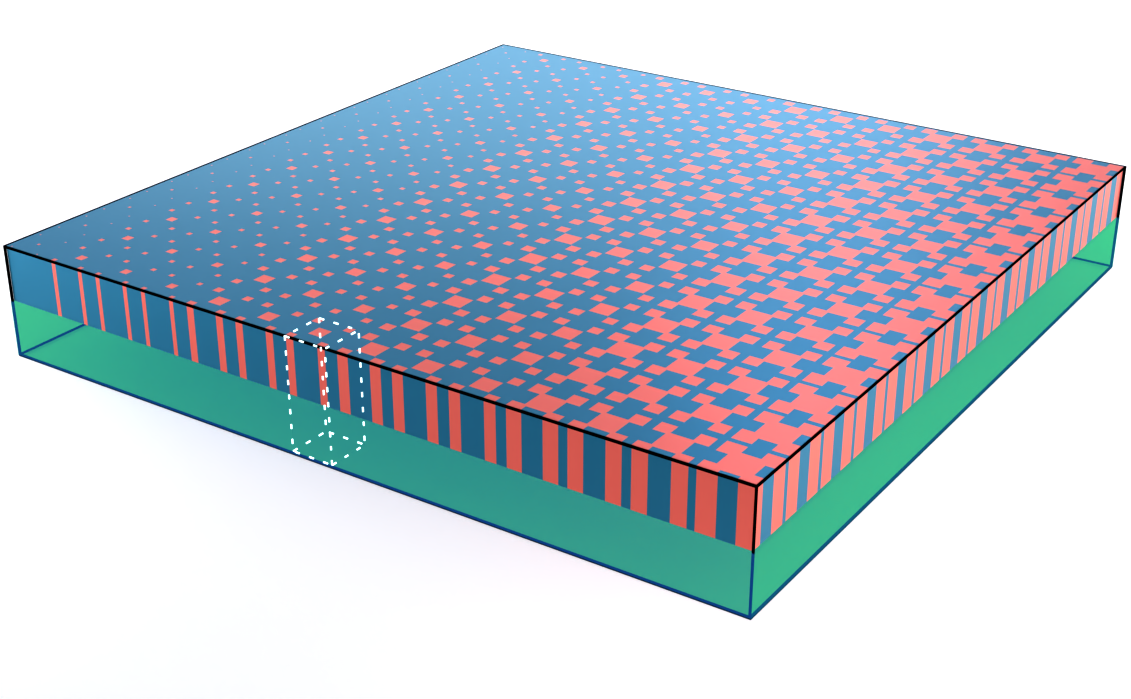}};;
      
      \coordinate (P) at (2.0,2.35,-1);
      
      \begin{scope}[
        shift={(0,2.25,0)}, 
        x={(1cm,-0.35cm)},        
        y={(-0.15cm,1cm)},        
        z={({cos(220)},{sin(220)})}, 
        scale=0.8 
        ]
        \coordinate (O) at (0.35,0.95,0); 
        \draw[->, thick, black] (O) -- (1.35,0.95,0) node[below] {$s_2$};
        \draw[->, thick, black] (O) -- (0.35, 0.95, 1.0) node[below] {$s_1$};
        \draw[->, thick, black] (O) -- (0.35, 1.995, -0.035) node[left] {$x_3$};
      \end{scope}
      
      \draw [decorate,decoration={brace, amplitude=5pt,raise=8pt}] (7.9,3.85) -- (7.82,3.2)
      node [right, midway, xshift=12pt, yshift=-4pt] {$\eta$};
      \draw [decorate,decoration={brace, amplitude=4pt,raise=6pt}] (2.5,1.8) -- (2.2,1.92)
      node [below, xshift=-2pt, yshift=-10pt] {$\e$};
      \node at (4,2,3) {\large$\Omega_\eta=S\times(-\tfrac\eta2,\tfrac\eta2)$};
      
      \draw[<->, thick, black] (P)+(0.05,0.1,0.0) to[out=65, in=180] (10.3,3.5,3);
      
      \node[anchor=south west, rotate=0] at (12.5,4,9) {\includegraphics[width=0.25\textwidth]{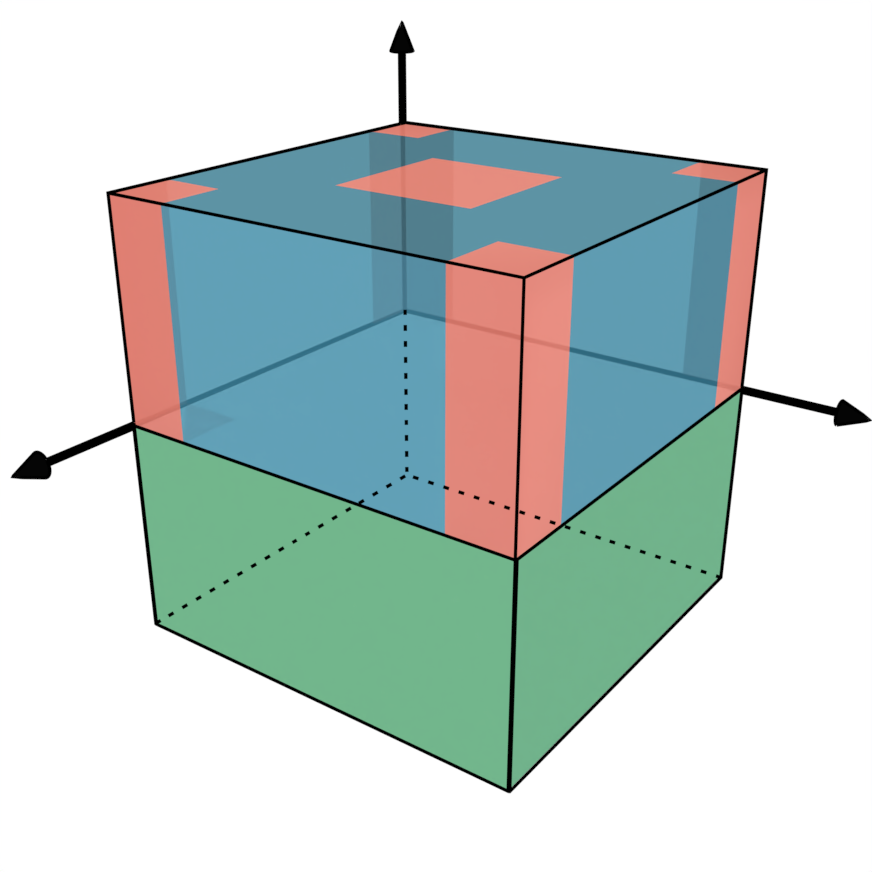}};;
      \node at (9.45,1.9,0) {$y_1$};
      \node at (12.35,2.15,0) {$y_2$};
      \node at (11.4,4.15,1) {$y_3$};
      
      \node at (10.75,0.5,0) {RVE $\Box$};
    \end{tikzpicture}
  \end{center}
  \caption{The two-dimensional domain $S$ and the representative volume element (RVE) $\Box$ are used to model the microstructure of the material on three-dimensional reference domain $\Omega_\eta$.}
  \label{fig:RVE}
\end{figure}

The two functions $W$ and $B$ that appear in~\eqref{def:ene} are a stored energy
density and a prestrain matrix field, respectively. Since they describe a material with a
microstructure, both depend on two types of coordinates: The global
coordinates $s \in S \subset \R^2$, and additional microstructure coordinates
$(\{\frac{s}{\epsilon}\}, \frac{x_3}{\plateh}) \in \Box\colonequals(0,1)^2\times(-\frac12,\frac12)$
(Figure~\ref{fig:RVE}). The microstructure domain $\Box$ can be regarded as a three-dimensional
representative volume element that describes the heterogeneity of the material
on the composite scale.  Formally, we get
\begin{align*}
  W : S\times\Box\times\R^{3\times 3}\to[0,\infty],
  \qquad \text{and} \qquad
  B : S\times\Box\to\R^{3\times 3}.
\end{align*}

The derivation in~\cite{boehnlein2023homogenized} of an effective bending model
with an elasticity tensor $\mathbb L$ that satisfies~\eqref{eq:Lbounds}
rests on the following assumptions:
\begin{assumption}[Three-dimensional energy density]
  The function $W$ is Borel-measurable, and there exist a constant $c_0>0$ and
  a monotone function $r:[0,\infty)\to[0,\infty]$  satisfying $\lim_{\delta\to0}r(\delta)=0$
  such that for all $s\in S$ and for almost every $y \in \Box$ the following properties hold:
  \begin{enumerate}[label=({W}\arabic*)]
    \item $W$ is frame indifferent:
    $$W(s,y,RF)=W(s,y,F)\qquad\text{for all $F\in\R^{3\times 3}$, $R\in \SO 3$.}$$
    
    \item \label{ass:nondegeneracy}
    $W$ is non-degenerate, i.e.:
    \begin{alignat*}{2}
      W(s,y,I_{3\times 3})&=0,&&\\
      W(s,y,F) &\geq \frac1{c_0} \dist^2(F,\SO 3) & \qquad & \text{for all $F\in\R^{3\times 3}$,}\\
      W(s,y,F) &\leq c_0 \dist^2(F,\SO 3)  &        & \text{for all $F\in\R^{3\times 3}$ with $\dist^2(F,\SO 3)\leq \frac1{c_0}$,}
    \end{alignat*}
    where $\dist^2(F,\SO 3)\colonequals\min_{R\in\SO 3}\abs{F-R}^2$.
    
    \item There is a quadratic form $Q(s,y,\cdot):\R^{3\times 3}\to\R$ such that
    \begin{equation*}
      \abs[\big]{W(x_3,y,I_{3\times 3}+G)-Q(x_3,y,G)}
      \leq
      \abs{G}^2r(\abs{G})\qquad\text{for all $G\in\R^{3\times 3}$.}
    \end{equation*}
  \end{enumerate}
\end{assumption}

Regarding the prestrain, we assume (with the same constant for simplicity):

\begin{assumption}[Three-dimensional prestrain]
  The matrix field $B$ is Borel-measurable, and there exists a constant $c_0>0$
  such that for all $s\in S$, the prestrain $B$ is integrable, and
  \begin{equation*}
    \int_\Box|B(s,y)|^2\dd y\leq c_0.
  \end{equation*}
\end{assumption}

Given these assumptions, for every $s \in S$ and $y \in \Box$ the elasticity tensor
$\mathbb L(s,y)\in\operatorname{Lin}(\R^{3\times 3};\R^{3\times 3})$
can be derived from the quadratic form $Q(s,y,\cdot)$ via the identity
\begin{equation*}
  \forall F,G\in\R^{3\times 3}\,:\quad
  \big\langle \mathbb L(s,y) F, G \rangle
  \colonequals
  \frac12\big(Q(s,y,F+G)-Q(s,y,F)-Q(s,y,G)\big).
\end{equation*}
Note that such an $\mathbb{L}$ indeed only acts on the symmetric part of its argument,
as required in~\eqref{eq:L_symmetry}.
It is proved in \cite[Lemma~2.7]{neukamm2012rigorous} that
the non-degeneracy of the energy density $W$ (Assumption~\ref{ass:nondegeneracy}) implies
the continuity and ellipticity of $\mathbb L$ as stated in~\eqref{eq:Lbounds}.

\subsection{Passing to the limit}

We replace~\eqref{def:ene} by an effective model for the limit $(\e,\plateh) \to (0,0)$.
In~\cite{boehnlein2023homogenized}, we showed $\Gamma$-convergence of the scaled
energy functional $\plateh^{-3}\mathcal E^{\e,\plateh}_\text{3d}$ to the two-dimensional prestrained bending energy
\begin{equation}\label{eq:Energy}
  \cE^\gamma:H^2_{\iso}(S;\R^3)\to\R,
  \qquad
  \cE^\gamma(\deform) \colonequals \int_S \Qhom\big(\mac,\II_\deform(\mac)-\Beff(\mac)\big)\dd\mac
\end{equation}
on the space
\begin{equation*}
  H^2_{\iso}(S;\R^3)
  \colonequals
  \Big\{\deform\in H^2(S;\R^3)\,:\, (\nabla\deform)^\top\nabla\deform=I_{2\times 2}\,\Big\}.
\end{equation*}
This combines homogenization $\e\to 0$ and dimension reduction $\plateh\to0$, and requires that the ratio $\plateh/\e$ converges to a finite limit $\gamma\in (0,\infty)$.
The effective energy depends on this limit.

\begin{remark}
  We note that the result of \cite{boehnlein2023homogenized} covers models
  that are slightly more general than the situation specified in Section~\ref{sec:3d_model}.
  In particular, we assume in our paper that the composite is locally $\Z^2$-periodic everywhere,
  whereas in \cite{boehnlein2023homogenized} the periodicity cell could change with every $s\in S$.
  Unlike the model of \cite{rumpf2023two}, we can handle prestrain
  and materials that depend on the out-of-plane variable $y_3$.
\end{remark}

The $\Gamma$-convergence result of \cite{boehnlein2023homogenized} also covers
affine displacement boundary conditions: Let $\Gamma_D\subset\partial S$ be relatively open and non-empty, and let $\widetilde{u}_D\in H^2_{\iso}(S;\R^3)$ be a bending deformation
with unit normal $n_{\widetilde{u}}\colonequals\partial_1\widetilde u\times \partial_2\widetilde u$.
Then one may consider clamped boundary conditions for the three-dimensional model
by restricting $\mathcal E^{\e,\plateh}_\text{3d}$ to the set
\begin{equation*}
  \cA^{\plateh}
  \colonequals
  \Big\{\deform \in H^1(\Omega_{\plateh};\R^3)\,:\,\deform(s,x_3)=(1-\plateh)\widetilde{u}_D(s)+x_3 n_{\widetilde{u}_D}(s)\text{ a.e.~on }\Gamma_D\times\big(\textstyle{-\frac\plateh2,\frac\plateh2}\big)\Big\}.
\end{equation*}
In \cite{boehnlein2023homogenized} it is shown under appropriate assumptions on $\Gamma_D$
that $\mathcal E^{\e,\plateh}_\text{3d}\vert_{\cA^\plateh}$ converges to
the effective energy $\mathcal E^\gamma$ restricted to the set
\begin{equation}\label{eq:BC}
  \cA
  \colonequals
  \Big\{\deform\in H^2_{\iso}(S;\R^3)\,:\,\deform=\widetilde{u}_D\text{ and }\nabla\deform=\nabla\widetilde{u}_D\text{ a.e.~on $\Gamma_D$}\Big\}.
\end{equation}
Showing this convergence requires that
\begin{equation}\label{prop:density}
  \text{$\cA\cap C^3(\overline S;\R^3)$ is $H^2$-dense in $\cA$.}
\end{equation}
A sufficient condition for this denseness is for instance:
\begin{assumption}
  \label{ass:domain}
  The midplane $S$ is a convex and bounded Lipschitz domain with piecewise $C^1$-boundary,
  the Dirichlet boundary $\Gamma_D\subset\partial S$ is a relatively open, non-empty line segment,
  and $\nabla\widetilde{u}_D$ is constant on~$\Gamma_D$.
\end{assumption}
We note that although the density of $C^\infty(\overline S;\R^3)\cap H^2_{\rm iso}(S;\R^3)$ in $H^2_{\iso}(S;\R^3)$
is known \cite{pakzad2004sobolev, HornungIsometries},
showing that the Assumptions~\ref{ass:domain} imply \eqref{prop:density} is nontrivial.
It is proved in \cite{bartels2023nonlinear}, and extended in \cite{griehl2024bending}
to slightly more general boundary conditions.

The effective problem is now a constrained minimization problem:
\begin{problem}[Effective problem]
  \label{prob:macminorig}
  \begin{equation*}
    \text{Minimize $\energy(\deform)$ subject to $\deform\in\cA$}.
  \end{equation*}
\end{problem}
With the direct method it is easy to see that this minimization problem admits a
(typically non-unique) solution if $\cA$ is a non-empty subset of $H^2_{\iso}(S;\R^3)$ that is closed with respect to weak convergence in $H^2$.
In particular, this is the case when we define $\cA$ by prescribing boundary conditions as in \eqref{eq:BC}.

\subsection{The effective properties of the two-dimensional plate}
\label{sec:homogenization_formulas}
In this section we define the homogenized quadratic form $Q_{\hom}^\gamma$
and the effective prestrain $B_{\eff}^\gamma$ that appear in the $\Gamma$-limit $\mathcal E^\gamma$.
Both quantities depend on the parameter $\gamma \colonequals \lim \frac{\plateh}{\e} \in(0,\infty)$.
The definitions of $Q_{\hom}^\gamma$ and $B^\gamma_{\eff}(\mac)$ are local in $\mac$:
For a fixed macroscopic point $\mac\in S$, the quantity $Q_{\hom}^\gamma(\mac,\cdot)$
only depends on the map $\Box\times\R^{3\times 3}\ni(y,G)\mapsto Q(\mac,y,G)$
of the three-dimensional model.
Likewise, $B^\gamma_{\eff}(\mac)$ depends only on the prestrain
map $\Box\ni y\mapsto B(\mac,y)$.

The precise definitions involve solutions of an elliptic boundary value problem
on the domain~$\Box$.
To formulate this problem we introduce the space of in-plane-periodic $H^1$-functions%
\footnote{Except for the constraint $\int_\Box \varphi \dd y = 0$,
this is the space denoted by $H^1_\gamma$ in \cite{boehnlein2023homogenized}.}
\begin{multline*}
  H^1_{\per}(\Box;\R^3)
  \colonequals
  \Big\{\varphi\in H^1(\Box;\R^3)
  \,:\, \int_\Box \varphi \dd y = 0,
  \\
  \text{$\varphi(0,\cdot,y_3)=\varphi(1,\cdot,y_3)$ and $\varphi(\cdot,0,y_3)=\varphi(\cdot,1,y_3)$}
  \\
  \text{in the sense of traces, for a.e.\ $y_3\in(-\tfrac12,\tfrac12)$}\,\Big\}.
\end{multline*}
The solutions of the elliptic problem are sums of in-plane affine and periodic microdeformations.
Define the embedding
\begin{equation*}
  \iota : \R^{2\times 2} \to \R^{3\times 3},
  \qquad
  \iota(G)\colonequals\sum_{i,j=1}^2 G_{ij}e_i\otimes e_j,
\end{equation*}
where $e_1, e_2, e_3$ is the canonical basis of $\R^3$. With this, define the function space
\begin{equation*}
  \mathcal{V}
  \colonequals
  \Big\{ \psi \in H^1(\Box;\R^3) : \ \psi(y) = \iota(M)y + \varphi(y),
  \ M \in \Rsym2,\  \psi \in H^1_\per(\Box;\R^3) \Big \}.
\end{equation*}
The space $\mathcal V$ contains displacements of the representative volume element $\Box$
that correct macroscopic strains of the form $y\mapsto\iota(y_3 G)$ (with $G\in\R^{2\times 2}_{\sym}$),
thereby ensuring force equilibrium on the level of the RVE. Such displacements
are called correctors. We denote them by $\vartheta(\mac, G)$ to indicate
their dependence on the macroscopic strain $G$ and the macroscopic position $\mac$.
For any given $\mac\in S$ and $G\in\Rsym2$ the corrector $\vartheta(s,G)$ is defined as the unique solution of the corrector problem
\begin{equation}\label{eq:corrector_equation}
  \int_{\Box} \Big\langle\mathbb L(\mac,y) \left(\iota(y_3 G)+\corr(\mac,G)\right), \nabla_\gamma v \Big\rangle\dd y=0 \quad \forall v\in \cV
\end{equation}
in the space $\cV$,
where $\nabla_\gamma\varphi \colonequals (\partial_{y_1}\varphi,\partial_{y_2}\varphi,\tfrac1\gamma\partial_{y_3}\varphi)$ is a transversally scaled gradient.

The following lemma from \cite{boehnlein2023homogenized} establishes the existence and uniqueness
of the corrector.

\begin{lemma}[Existence and uniqueness of a corrector field {\cite[Lemma~2.23]{boehnlein2023homogenized}}]
  For every $G\in \R^{2\times 2}_{\sym}$ and macroscopic material point $\mac\in\dom$ there exists a unique $\vartheta(s,G)\in \cV$ solving~\eqref{eq:corrector_equation}.
\end{lemma}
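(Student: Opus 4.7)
The plan is to rewrite the corrector equation as a variational problem on a Hilbert space and apply the Lax--Milgram theorem. Define the bilinear form and the linear functional
\begin{equation*}
  a(\psi,v) \colonequals \int_\Box \langle \mathbb L(s,y)\nabla_\gamma \psi,\nabla_\gamma v\rangle \dd y,
  \qquad
  \ell(v) \colonequals -\int_\Box \langle \mathbb L(s,y)\iota(y_3 G),\nabla_\gamma v\rangle \dd y.
\end{equation*}
Then \eqref{eq:corrector_equation} amounts to finding $\vartheta(s,G)\in\cV$ with $a(\vartheta(s,G),v)=\ell(v)$ for every $v\in\cV$.

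First I would verify that $\cV$, equipped with the inner product $(\psi_1,\psi_2)\mapsto \int_\Box \langle \nabla_\gamma \psi_1,\nabla_\gamma \psi_2\rangle \dd y$, is a Hilbert space. Uniqueness of the decomposition $\psi=\iota(M)y+\varphi$ follows because the in-plane periodicity of $\varphi$ and the zero-mean condition yield $\iota(M)=\int_\Box \sym\nabla\psi \dd y$ (upper-left block) while $\varphi=\psi-\iota(M)y$. The seminorm $\psi\mapsto\|\nabla_\gamma\psi\|_{L^2}$ is actually a norm: if $\nabla_\gamma\psi=0$, then $\psi$ is constant, which combined with the zero-mean of $\varphi$ and the linear structure of $\iota(M)y$ forces $M=0$ and $\varphi=0$. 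Since $\gamma$ is a fixed positive constant, this norm is equivalent to the restriction of the $H^1$-norm, and closedness of $\cV$ in $H^1(\Box;\R^3)$ then gives completeness.

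Continuity of $a$ and $\ell$ follows directly from the upper bound in \eqref{eq:Lbounds} and Cauchy--Schwarz, using that $|\iota(y_3G)|\le \tfrac12|G|$. The main step is coercivity. The lower bound in \eqref{eq:Lbounds} gives $a(\psi,\psi)\geq c_0^{-1}\int_\Box|\sym\nabla_\gamma\psi|^2 \dd y$. Writing $\psi=\iota(M)y+\varphi$ and noting that $\iota(M)\in \R^{3\times 3}_{\sym}$ is independent of $y$, we have $\sym\nabla_\gamma\psi=\iota(M)+\sym\nabla_\gamma\varphi$. The crucial observation is that the cross term vanishes:
\begin{equation*}
  \int_\Box \langle \iota(M),\sym\nabla_\gamma\varphi\rangle \dd y = 0,
\end{equation*}
because $\iota(M)$ has nonzero entries only in the first two columns, while $\int_\Box \partial_{y_i}\varphi \dd y=0$ for $i=1,2$ by the in-plane $\Z^2$-periodicity of $\varphi$. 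Hence
\begin{equation*}
  \int_\Box|\sym\nabla_\gamma\psi|^2 \dd y = |\iota(M)|^2+\int_\Box|\sym\nabla_\gamma\varphi|^2 \dd y.
\end{equation*}

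The main remaining obstacle is a Korn-type inequality on $H^1_{\per}(\Box;\R^3)$, namely $\int_\Box |\nabla_\gamma\varphi|^2 \dd y \leq C_\gamma\int_\Box|\sym\nabla_\gamma\varphi|^2 \dd y$ for $\varphi\in H^1_{\per}(\Box;\R^3)$. Existence of such a constant $C_\gamma<\infty$ follows from a standard compactness argument once one checks that the only $\varphi\in H^1_{\per}(\Box;\R^3)$ with $\sym\nabla\varphi=0$ is $\varphi\equiv 0$: such a $\varphi$ is an affine infinitesimal rigid motion $y\mapsto Ry+c$ with $R$ skew-symmetric, and the in-plane periodicity forces the first two columns of $R$ to vanish, which by skew-symmetry kills $R$ entirely, and the zero-mean constraint then kills $c$. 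Combining Korn's inequality with the identity above yields $a(\psi,\psi)\geq c(|\iota(M)|^2+\|\nabla_\gamma\varphi\|_{L^2}^2)$, and since $|\nabla_\gamma\psi|^2\le 2|\iota(M)|^2+2|\nabla_\gamma\varphi|^2$ pointwise, we conclude $a(\psi,\psi)\geq c'\|\nabla_\gamma\psi\|_{L^2}^2$. Lax--Milgram then produces the unique $\vartheta(s,G)\in\cV$ solving \eqref{eq:corrector_equation}.
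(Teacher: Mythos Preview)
Your argument is correct and follows the standard route: recast \eqref{eq:corrector_equation} as a variational problem on the closed subspace $\cV\subset H^1(\Box;\R^3)$, verify boundedness of $a$ and $\ell$ from \eqref{eq:Lbounds}, obtain coercivity via the orthogonality of $\iota(M)$ and $\sym\nabla_\gamma\varphi$ combined with a Korn inequality on $H^1_{\per}$, and conclude with Lax--Milgram. The paper does not give its own proof of this lemma (it is cited from \cite{boehnlein2023homogenized}), but your approach matches exactly the argument the paper uses for the discrete analogue in Lemma~\ref{L:approx:corr}, and is indeed how the cited result is established.

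One minor correction: in the compactness step for Korn you should check the kernel of $\sym\nabla_\gamma$, not of $\sym\nabla$. The two are related by the anisotropic rescaling $\psi(z)=\varphi(z_1,z_2,z_3/\gamma)$, under which $\nabla_z\psi=\nabla_\gamma\varphi$; then $\sym\nabla_\gamma\varphi=0$ forces $\psi$ to be an infinitesimal rigid motion, and your periodicity-plus-skew-symmetry argument goes through verbatim for $\psi$, yielding $\varphi\equiv 0$ as claimed.
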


In \cite{boehnlein2023homogenized} we gave coordinate-free
definitions of $\Qhom$ and $\Beff$ that were formulated using orthonormal projections
on certain subspaces of the space of strains on the composite scale. For practical computations,
it is more convenient to work in a coordinate system of $\Rsym2$.
We therefore introduce an orthonormal basis $G_1,G_2,G_3$
of $\Rsym2$, and we write $\widehat A_1,\widehat A_2,\widehat A_3\in\R$ for the coefficients of
a matrix $A\in\Rsym2$ relative to that basis. We then set
$\vartheta_i(s)\colonequals\vartheta(s,G_i) \in \mathcal V$ for the correctors associated with the
basis elements $G_1, G_2, G_3$. Note that the solutions of~\eqref{eq:corrector_equation}
depend linearly on $G$, and therefore
\begin{equation*}
  \vartheta(s,G)=\sum_{i=1}^3\widehat G_i\vartheta_i(s)\qquad\text{for all }G\in\Rsym2.
\end{equation*}

The effective bending stiffness and prestrain are now constructed from the three
basis correctors $\vartheta_i$, $i=1,2,3$.

\begin{definition}[Effective bending stiffness $\Qhom$]\label{def:effectiveStiffness}
  For $\mac\in S$ let $\Qhomc(\mac)\in\R^{3\times 3}_{\sym}$ be given
  componentwise by
  \begin{align}
    \label{Qintegral}
    \Qhomc_{ij}(\mac)
    &\colonequals
    \int_{\Box} \Big\langle \mathbb L(\mac,y) \big(\iota(y_3 G_i)+ \corr_i(\mac,y) \big), \big(\iota(y_3 G_j)+\corr_j(\mac,y)   \big) \Big\rangle \dd y,
    \quad
    i,j=1,2,3.
  \end{align}
  We then define for all $G\in\R^{2\times 2}$
  \begin{align*}
    \Qhom(s,G) & \colonequals\sum_{i,j=1}^3\Qhomc_{ij}\widehat{\sym G}_i\widehat{\sym G}_j,
  \end{align*}
  where $\widehat{\sym G}_i$ is the $i$-th component of the symmetric part of $G$.
  The effective elasticity tensor $\mathbb L^\gamma_{\hom}(\mac)\in\operatorname{Lin}(\R^{2\times 2};\R^{2\times 2})$ is given by the identity
  \begin{equation*}
    \forall F,G\in\R^{2\times 2}\,:\,
    \big\langle \mathbb L^\gamma_{\hom}(s,y) F, G \rangle
    \colonequals
    \frac12\Big(Q^\gamma_{\hom}(s,F+G)-Q^\gamma_{\hom}(s,F)-Q^\gamma_{\hom}(s,G)\Big).
  \end{equation*}
\end{definition}

\begin{definition}[Effective prestrain $\Beff$]\label{def:effectivePrestrain}
  For $\mac\in S$ let $\Bc(\mac) \in \R^3$ be given componentwise by
  \begin{align*}
    \Bc_i(\mac)
    &\colonequals
    \int_{\Box} \Big\langle \mathbb L(\mac,y)\big(\iota(y_3 G_i)+ \corr_i(s,y) \big), B \Big\rangle\dd y,
    \qquad
    i=1,2,3.
  \end{align*}
  We set
  \begin{align*}
    \Beff(s)  & \colonequals\sum_{i=1}^3 \widehat{B}_{\textup{eff}, i}(s) G_i \in \Rsym2,
  \end{align*}
  where $\widehat{B}_{\textup{eff}}(s)\in\R^3$ is the unique solution to
  \begin{equation}
    \Qhomc(\mac) \Beffc(\mac) = \widehat{B}(\mac).
    \label{eq:prestrainSystemA}
  \end{equation}
\end{definition}

We recall some properties of $\Qhom$ and $\Beff$ from \cite{boehnlein2023homogenized}.
The quadratic form $\Qhom(s,\cdot)$ is positive definite and satisfies
\begin{equation*}
  \forall G\in\Rsym2\,:\,\frac{1}{c_1}\abs{G}^2\leq\Qhom(s,G)\leq c_1|G|^2,
\end{equation*}
where $c_1 > 0$ only depends on the constant $c_0$ from the ellipticity~\eqref{eq:Lbounds},
and is independent of $\mac\in S$.  The corresponding
matrix $\Qhomc(\mac)$ defined in~\eqref{Qintegral} is positive definite
and bounded (both uniformly in $\mac$). As a consequence the linear
system \eqref{eq:prestrainSystemA} admits a unique solution and thus $\Beff(\mac)$ is well-defined.
Standard energy estimates for $\vartheta_i(\mac)$ yield
\begin{equation*}
  |\Beff(\mac)|\leq c_2\|B(\mac,\cdot)\|_{L^2(\Box)},
\end{equation*}
for a constant $c_2$ only depending on $c_0$.

\subsection{Reformulation of the effective two-dimensional problem}

We now discuss a reformulation of the effective minimization Problem~\ref{prob:macminorig}
that seems more suitable for finite element discretizations.
As already observed by \cite{bartels2017bilayer,bartels2022stable},
for isometry deformations $\deform \in H^2_\iso$ one can bring the effective plate
energy $\energy$ defined in~\eqref{eq:Energy}
into an equivalent form where the second fundamental form is replaced by the Hessian $\nabla^2\deform$.
The latter is a third-order tensor with entries $(\nabla^2\deform)_{ijk} \colonequals \partial_j \partial_k \deform_i$.
For any vector $a \in \R^3$ and matrix $A \in \R^{2\times 2}$ let $a \otimes A$ be the
3-tensor with entries $(a\otimes A)_{ijk} \colonequals a_iA_{jk}$.
For a three-tensor $\mathbf{A}\in \R^{3\times 2\times 2}$ and $a\in\R^3$, write $a \cdot \mathbf{A}$ for the matrix in $\R^{2\times 2}$ with entries $(a\cdot \mathbf{A})_{jk}\colonequals\sum_{i=1}a_i\mathbf{A}_{ijk}$.

The reformulation is based on the following classical identities (see \cite[Proposition~3]{Muller2005}):
\begin{lemma}
  For any isometric immersion $\deform\in H^2_{\iso}(S;\R^3)$,
  \begin{equation}\label{geometric:identities}
    \II_\deform=-n_\deform\cdot \nabla^2\deform,
    \qquad \text{and} \qquad
    n_{\deform}\otimes(n_\deform\cdot \nabla^2\deform)=\nabla^2\deform,
  \end{equation}
  where $n_\deform\colonequals \partial_1\deform\times \partial_2\deform$ is the unit normal of $\deform$.
  In particular, $-n_\deform\otimes\II_\deform=\nabla^2\deform$.
\end{lemma}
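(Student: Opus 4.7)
The plan is to exploit the pointwise orthonormal frame $\{\partial_1\deform,\partial_2\deform,n_\deform\}$ induced almost everywhere by the isometry constraint, and to show that the Hessian $\nabla^2\deform$ is purely normal, i.e.\ that $\partial_i\partial_j\deform$ is parallel to $n_\deform$ for all $i,j\in\{1,2\}$. Once this is established, both identities follow by a direct decomposition in that frame.

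First I would differentiate the isometry relation $\partial_i\deform\cdot\partial_j\deform=\delta_{ij}$. Since $\deform\in H^2(S;\R^3)$, each $\partial_i\deform$ lies in $H^1(S;\R^3)$, so the scalar products are a priori in $W^{1,1}(S)$ by Cauchy--Schwarz, and the product rule for weak derivatives yields
$$\partial_l\partial_i\deform\cdot\partial_j\deform+\partial_i\deform\cdot\partial_l\partial_j\deform=0\qquad\text{a.e.\ in }S,$$
for all $i,j,l\in\{1,2\}$. Setting $T_{l,ij}\colonequals\partial_l\deform\cdot\partial_i\partial_j\deform$, I combine the Schwarz symmetry $T_{l,ij}=T_{l,ji}$ with the antisymmetry $T_{l,ij}=-T_{i,lj}$ just derived. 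Applying the two relations alternately gives $T_{l,ij}=T_{j,li}$ on the one hand and $T_{l,ij}=-T_{j,li}$ on the other, so $T_{l,ij}=0$ for all indices a.e.\ in $S$.

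Since $\{\partial_1\deform,\partial_2\deform,n_\deform\}$ is an orthonormal basis of $\R^3$ almost everywhere, the vanishing of all tangential components forces
$$\partial_i\partial_j\deform=\bigl(n_\deform\cdot\partial_i\partial_j\deform\bigr)\,n_\deform\qquad\text{a.e.\ in }S.$$
Written componentwise, $\partial_j\partial_k\deform_i=(n_\deform)_i\,(n_\deform\cdot\partial_j\partial_k\deform)$, which is precisely the tensor identity $\nabla^2\deform=n_\deform\otimes(n_\deform\cdot\nabla^2\deform)$, giving the second formula. For the first formula, I would differentiate the tangency $n_\deform\cdot\partial_k\deform=0$, which gives $\partial_jn_\deform\cdot\partial_k\deform=-n_\deform\cdot\partial_j\partial_k\deform$; recognising the left-hand side as $(\II_\deform)_{jk}$ under the sign convention used in the paper yields $\II_\deform=-n_\deform\cdot\nabla^2\deform$. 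The concluding identity $-n_\deform\otimes\II_\deform=\nabla^2\deform$ is then obtained by substitution.

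There is no serious obstacle: the statement is the classical fact that a surface bends only in the normal direction under an isometric parametrisation. The only point requiring minor care is the product rule at $H^2$-regularity in two dimensions, where $H^1\not\subset L^\infty$; after this, the proof reduces to a brief algebraic manipulation of index symmetries and a frame decomposition.
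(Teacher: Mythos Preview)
Your argument is correct and is in fact the standard proof of this classical fact. The paper does not supply its own proof of the lemma; it simply cites \cite[Proposition~3]{Muller2005}. So there is nothing to compare against beyond noting that your argument is essentially the one found in the cited reference.

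One small remark: your caveat about the product rule ``at $H^2$-regularity in two dimensions, where $H^1\not\subset L^\infty$'' is unnecessary here. The isometry constraint $|\partial_i\deform|=1$ a.e.\ already forces $\partial_i\deform\in H^1\cap L^\infty$, and then $n_\deform=\partial_1\deform\times\partial_2\deform\in H^1\cap L^\infty$ as well, so the Leibniz rule applies without any appeal to Sobolev embeddings.
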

These identities allow to replace $\II_\deform$ by $\nabla^2 \deform$ in bending models.
However, the presence of a prestrain term makes things more complicated.
First, to reformulate the problem in terms of the third-order tensor $\nabla^2\deform$
we introduce an effective linearized material for third-order tensors $\mathbf{A}\in\R^{3\times 2\times 2}$ by
\begin{equation*}
  \bQhom(s,\mathbf{A})
  \colonequals
  \sum_{i=1}^3\Qhom(s,e_i \cdot \mathbf{A}).
\end{equation*}
Again, there is an associated stiffness tensor defined by
\begin{equation*}
  \forall \mathbf{F}, \mathbf{G} \in\R^{3\times 2\times 2}\,:\,
  \big\langle \boldsymbol{L}^{\gamma}_{\hom}(s) \mathbf{F}, \mathbf{G} \big\rangle
  \colonequals
  \frac12\Big(\bQhom(s,\mathbf{F}+\mathbf{G})-\bQhom(s,\mathbf{F})-\bQhom(s,\mathbf{G})\Big).
\end{equation*}
With these new quantities we can multiply out the density quadratic density $\Qhom$ of $\energy$.
Note that the following lemma does not make use of the precise definition of $\Qhom$,
and would also work for other quadratic forms.

\begin{lemma}
  Let $\deform \in H^2_{\iso}(\dom;\R^3)$, 
  then
  \begin{align}
    \nonumber
    \Qhom \big(\mac , \II_\deform-\Beff(\mac)\big)
    & =
    \bQhom \big(\mac,\nabla^2\deform+n_\deform\otimes\Beff(\mac) \big)
    \\
    \label{eq:reformulation}
    & =
    \bQhom \big(\mac,\nabla^2\deform \big)+2\big\langle\boldsymbol{L}^\gamma_{\hom}(\mac)\nabla^2\deform, n_\deform\otimes\Beff(\mac)\big\rangle+\Qhom(\mac,\Beff(\mac)).
  \end{align}
\end{lemma}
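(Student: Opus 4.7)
The proof rests entirely on the two geometric identities stated just before the lemma and on the quadratic (hence polarizable) structure of $\Qhom(s,\cdot)$ and $\bQhom(s,\cdot)$, so I would split it into two independent parts: first the geometric reformulation (the first equality), then the algebraic expansion (the second equality).

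For the first equality, the plan is to exploit that for any isometric immersion the Hessian has the tensor product structure $\nabla^2\deform = -n_\deform\otimes \II_\deform$. Substituting this into the argument of $\bQhom$, I would rewrite
\begin{equation*}
  \nabla^2\deform + n_\deform\otimes\Beff(s) \;=\; n_\deform\otimes\bigl(-\II_\deform+\Beff(s)\bigr) \;=\; -\,n_\deform\otimes\bigl(\II_\deform-\Beff(s)\bigr).
\end{equation*}
It therefore suffices to show a general identity of the form $\bQhom(s,n_\deform\otimes M)=\Qhom(s,M)$ for any $M\in\Rsym2$, since $\Qhom$ is quadratic and hence unchanged by the overall sign. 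To establish this identity, unfold the definition $\bQhom(s,\mathbf A)=\sum_{i=1}^3 \Qhom(s,e_i\cdot\mathbf A)$ and observe that $e_i\cdot(n_\deform\otimes M)=(n_\deform)_i\,M$. By quadratic scaling this gives $\Qhom(s,(n_\deform)_iM)=(n_\deform)_i^2\,\Qhom(s,M)$, so summing over $i=1,2,3$ yields $|n_\deform|^2\,\Qhom(s,M)$, which collapses to $\Qhom(s,M)$ because $n_\deform$ is a unit normal.

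For the second equality, I would simply apply the polarization identity associated with $\bQhom$ and its stiffness tensor $\boldsymbol{L}^\gamma_{\hom}$, namely
\begin{equation*}
  \bQhom(s,\mathbf F+\mathbf G) \;=\; \bQhom(s,\mathbf F) + 2\bigl\langle\boldsymbol{L}^\gamma_{\hom}(s)\mathbf F,\mathbf G\bigr\rangle + \bQhom(s,\mathbf G),
\end{equation*}
with $\mathbf F=\nabla^2\deform$ and $\mathbf G=n_\deform\otimes\Beff(s)$. The last term $\bQhom(s,n_\deform\otimes\Beff(s))$ is reduced to $\Qhom(s,\Beff(s))$ by exactly the same computation as in the first part, with $M=\Beff(s)$. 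The cross term is already in the desired form.

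There is essentially no hard step here: the whole statement is a direct algebraic consequence of the cited geometric identities combined with bilinearity and the unit-normal property. The only point that requires care is bookkeeping of the sign in $\II_\deform=-n_\deform\cdot\nabla^2\deform$, which is absorbed harmlessly because $\Qhom(s,\cdot)$ and $\bQhom(s,\cdot)$ are even functions of their argument.
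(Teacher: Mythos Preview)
Your proof is correct and follows essentially the same route as the paper: both establish the key identity $\bQhom(s,a\otimes M)=\Qhom(s,M)$ for a unit vector $a$ by unfolding the definition and using $\sum_i a_i^2=1$, combine it with the geometric identity $\nabla^2\deform=-n_\deform\otimes\II_\deform$ (absorbing the sign by evenness of the quadratic form), and then expand the quadratic form for the second equality. The only cosmetic difference is direction---you start from $\nabla^2\deform+n_\deform\otimes\Beff$ and rewrite it as $-n_\deform\otimes(\II_\deform-\Beff)$, whereas the paper starts from $\II_\deform-\Beff$ and tensors with $n_\deform$---but the content is identical.
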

\begin{proof}
  In view of the definition of $\bQhom$, for any $A\in\Rsym2$ and $a\in\R^3$ with $|a|_2^2=1$, we have
  \begin{equation*}
    \Qhom(\mac, A)
    =
    \sum_{i=1}^3(a\cdot e_i)^2\Qhom(\mac,A)
    =
    \sum_{i=1}^3\Qhom(\mac,e_i\cdot(a\otimes A))
    =
    \bQhom(\mac, a\otimes A).
  \end{equation*}
  Use this with $a = n_\deform$ and $A = \II_\deform-\Beff(\mac)$ to get
  \begin{align*}
    \Qhom\left(\mac,\II_\deform-\Beff(\mac)\right)
    =
    \bQhom\left(\mac,n_\deform\otimes\big(\II_\deform-\Beff(\mac)\big)\right).
  \end{align*}
  Combined with \eqref{geometric:identities} we obtain
  \begin{align*}
    \Qhom\left(\mac,\II_\deform-\Beff(\mac)\right)
    =
    \bQhom\big(\mac,\nabla^2\deform+n_\deform\otimes\Beff(\mac)\big),
  \end{align*}
  which is the first equality.
  By expanding the quadratic term on the right, the second identity follows.
\end{proof}

Since the last term in~\eqref{eq:reformulation} is independent of the deformation $\deform$, we can subtract it from $\energy$ without changing the set of minimizers
or the set of critical points, to get
\begin{equation}
  \label{eq:reformulated_energy}
  \tenergy
  \colonequals
  \int_S\bQhom(\mac,\nabla^2\deform)+2\Big\langle\boldsymbol{L}^\gamma_{\hom}(\mac)\nabla^2\deform,n_\deform\otimes\Beff(\mac)\Big\rangle \dd \mac.
\end{equation}
We arrive at the following minimization problem, which is equivalent to the
effective Problem~\ref{prob:macminorig}:
\begin{problem}[Reformulated problem]
  \begin{equation*}
    \text{Minimizer $\tenergy(\deform)$ subject to $\deform\in\cA$}.
  \end{equation*}
\end{problem}

This reformulation has several advantages:

\begin{itemize}
  \item First of all, it
  replaces the nonlinear second fundamental form $\II_\deform$ form by the Hessian $\nabla^2\deform$,
  which is a linear expression. The price to pay is the appearance of
  the nonlinearity $n_\deform\otimes\Beff$, which is however of lower order.
  Works like~\cite{bartels2022stable} have exploited this by treating the
  (linear) first part of~\eqref{eq:reformulated_energy} implicitly in a gradient flow algorithm, and the (nonlinear)
  second part explicitly.
  
  \item Since $\II_{jk} \colonequals \partial_jv\cdot\partial_k n$
  is cubic in the derivatives of $\deform$ but $\nabla^2v$ is only linear,
  such a reformulation lowers the degree of the integrand if $H^2_\text{iso}$ is approximated
  by a space of piecewise polynomials.
  
  \item Discretizing $\II_{\deform}$ in a way that preserves coercivity is delicate. 
  The coercivity argument in the continuum uses two crucial structural facts about the 
  second fundamental form: (i) $\II_{\deform}$ is symmetric, and (ii) 
  $|\II_{\deform}|^2 = |\nabla^2 \deform|^2$. In the proof of 
  Theorem~\ref{T:main} we consider a natural discretization of $\II_\deform$, namely $n_H \cdot \nabla\nabla_H \deform$, where $n_H$ is defined by interpolating the normal to the discrete partial derivatives at the nodes.  We prove that it converges to
  $\II_{\deform}$. However, this discretization does not provide the coercivity, because boundedness of 
  $\sym(n_H \cdot \nabla\nabla_H \deform)$ in $L^2$ does not imply boundedness of $\nabla\nabla_H\deform$.
  In contrast, the reformulated energy yields for each component $i=1,2,3$ control of $\sym\nabla\nabla_H\deform_i$ in $L^2$, and thus the coercivity is obtained by Korn's inequality.
\end{itemize}

Finally, we note that in actual numerical experiments the reformulated functional
appears to perform much better than the original one. However, the exact reason for this
remains unclear.

\section{Numerical approximation of the effective quantities}
\label{sec:MicroNumerics}
In this section, we introduce numerical approximations $\Qhomh(\mac,\cdot)$
and $\Beffh(\mac)$ of the effective quantities $\Qhom(\mac,\cdot)$ and $\Beff(\mac)$,
respectively. These approximations are used in the global discretization of
the energy functional $\tenergy$ presented in Chapter~\ref{sec:MacroDiscretization},
where they are evaluated at the points
of a quadrature rule for the integral $\int_S\dd s$.
Their definitions are based on a
finite element approximation of the corrector problem \eqref{eq:corrector_equation},
which yields approximate correctors $\vartheta^{h}(\mac,G)$.
The approximate quantities $\Qhomh(\mac,\cdot)$ and $\Beffh(\mac)$ are then defined 
analogously to the continuum setting (Definitions~\ref{def:effectiveStiffness} and~\ref{def:effectivePrestrain}),
with the difference that the integral $\int_\Box \,\dd y$ is replaced by a
numerical quadrature operator $\cG_{\Box}^{h}$.

As we shall explain below, under appropriate assumptions we obtain
\begin{align*}
  \norm{\vartheta^{h}(\mac) - \vartheta(\mac)}_{H^1(\Box;\R^3)} \to 0
\end{align*}
by a $\Gamma$-convergence argument,
and, as a corollary, the convergence of $\Qhomh$ and $\Beffh$ a.e.\ in~$S$.
This is less than the corresponding result in~\cite{rumpf2023two}, which shows even
convergence with a linear rate. This of course requires
stronger regularity assumptions on the coefficients~$\mathbb L$ than we make here,
and rules out piecewise constant composites. We settle for rate-less convergence
under weaker assumptions here, because our convergence proof for the overall discretization
(Theorem~\ref{T:main} below) does not need a rate for the convergence of
$\Qhomh(\mac,\cdot)$ and $\Beffh(\mac)$ as $h\to0$.

\subsection{Discretization of the corrector problem}
\label{sec:discreteCorr}

We now define the discretization of the corrector problem~\eqref{eq:corrector_equation}
using standard Lagrange finite elements.
Let $\microgrid$ be a shape-regular family of partitions of the domain $\Box$
into axis-aligned hexahedral elements $K\in \microgrid$, parametrized by a
microscopic discretization parameter $h$ representing the maximal edge length.
Note that not all elements have to have the same size, see Figure~\ref{fig:RVEmesh}.
Note that energy density and prestrain can vary smoothly inside the mesh cells $K\in\microgrid$.
\begin{wrapfigure}[20]{R}{4.0cm} 
  \label{fig:RVEmesh}
  \centering
  \begin{tikzpicture}
    \node[anchor=south west, rotate=0] at (0,0,0) {\includegraphics[width=0.25\textwidth]{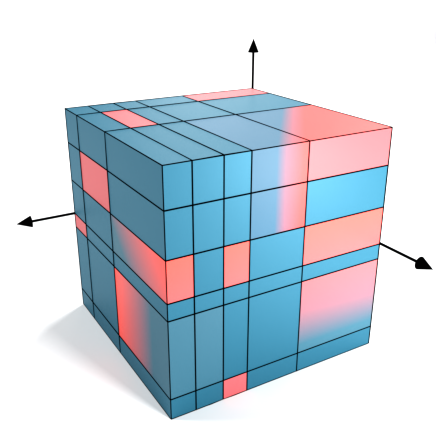}};;
    \node at (0.5,1.75,0) {$y_2$};
    \node at (3.85,1.4,0) {$y_1$};
    \node at (2.8,3.55,0) {$y_3$};
  \end{tikzpicture}
  \caption{Example of an admissible hexahedral partition $\microgrid$.
  The mesh resolves jumps in the coefficients. The coefficients can vary
  within each element, but have to remain Lip\-schitz continuous.}
\end{wrapfigure}
Denote the space of multilinear functions on $K\in\microgrid$ by
$\cQ_1(K)\colonequals\big\{f \in L^2(K)\,:\,f(y)=\prod_{i=1}^3(a_i+b_iy_i)\text{ with $a_i,b_i\in\R$} \big\}$,
and let
\begin{align*}
  \cQ_1
  \colonequals
  \left\{ v\in C^0(\Box;\R^3) \ : {v|}_K \in [\cQ_1(K)]^3 \text{ for all $K \in \microgrid$} \right\}
\end{align*}
be the space of continuous piecewise $\cQ_1(K)$ vector fields on $\Box$.
Denote its subspace of in-plane periodic vector fields with zero integral by
\begin{align*}
  \cQ_{1}^{\text{per}}\colonequals \cQ_1 \cap  H^1_\per(\Box;\R^3).
\end{align*}
The corrector finite element space $\cV^h \subset \cV$ consists of sums of affine deformations
and in-plane-periodic finite element vector fields
\begin{equation*}
  \mathcal{V}^h
  \colonequals
  \Big\{ \vartheta_h \in C^0(\Box;\R^3) : \vartheta_h(y) = \iota(M)y + \varphi_h(y), \ M \in \Rsym2,\  \varphi_h \in \cQ_1^{\text{per}} \Big\}.
\end{equation*}
This is evidently a subspace of the corrector space~$\cV$ of Section~\ref{sec:homogenization_formulas}.

To approximate integrals, for every $K\in \microgrid$ we consider a quadrature scheme
consisting of a set of quadrature points $r^K_j\in K$ and weights $\omega_j^K>0 $
for $j=1,\ldots,l$. We denote the set of all quadrature points on the grid $\microgrid$ by
\begin{equation*}
  \mathcal R_h(\Box)\colonequals \big\{r^K_j\,:\,K\in\microgrid,\,j=1,\ldots,n\big\},
\end{equation*}
and define the quadrature operators
\begin{alignat*}{2}
  \cG_K^h & : C^0(K;\R) \to \R, & \qquad       \cG_K^h[f] &\colonequals  \sum_{j=1}^{l} \omega_j^K f(r^K_j),
  \\
  \cG_\Box^h & : C^0(\Box;\R) \to \R, &     \cG_\Box^h[f] &\colonequals  \sum_{K\in \microgrid}\cG_K^h[f].
\end{alignat*}

Since we need our estimates to be uniform in $\mac\in S$ and $h > 0$, we assume that
the quadrature schemes for the $K\in\microgrid$ are all created from a single reference quadrature rule
$(\hat{r}_j,\hat{\omega}_j)$ on the reference element $\hat{K}\colonequals[0,1]^3$
by translation and dilation. We make the following assumptions (see Appendix~\ref{sec:appendixA} for details):
\begin{enumerate}[label=(\alph*)]
  \item The set $\{\hat{r}_{j}\}_{j=1,\ldots,l}$  contains a $\cQ_1(\hat{K})$-unisolvent subset.
  \item The quadrature rule $(\hat{r}_j,\hat{\omega}_j)$ is exact on $\cQ_1(\hat{K})$.
\end{enumerate}

The approximate corrector $\vartheta^h(\mac,G)$ can then be defined as the unique solution
in $\mathcal{V}^h$ of the linear system
\begin{equation}\label{eq:corrector_discrete}
  \cG_{\Box}^{h}\Big[\big \langle\mathbb{L}(\mac,\cdot)(\iota(y_3G)+\nabla_\gamma\vartheta^{h}(\mac,\cdot)),\,\nabla_\gamma \psi^h(\cdot) \big\rangle\Big]
  =0
  \qquad
  \text{for all $\psi^h\in\mathcal{V}^h$}.
\end{equation}
As previously we define $\vartheta_i^h(\mac) \colonequals \vartheta^h(\mac,G_i)$,
where $G_1, G_2, G_3$ is the chosen orthonormal basis of $\Rsym2$.

The quadrature operator $\cG^h_\Box$ invokes the evaluation of $\mathbb L(\mac,y)$ and $B(\mac,\cdot)$
at all points from the quadrature point set $R_h(\Box)$. 
For the time being we simply assume that this evaluation is well defined.
Later, in Proposition~\ref{prop:approxcor}, we will make a local regularity assumption
that enforces $\mathbb L(\mac,\cdot)$ and $B(\mac,\cdot)$ to have representatives that are continuous at all quadrature points $y\in R_h(\Box)$.

\subsection{Analysis of the approximate corrector problem}

\begin{lemma}[Existence and energy estimate]\label{L:approx:corr}
  Let $\mac\in S$ and assume that $\mathbb L(\mac,y)$ satisfies the ellipticity bound \eqref{eq:Lbounds} for all $y\in R_h(\Box)$. Then for every $G \in \Rsym2$ the corrector $\vartheta^h(\mac,G)$ exists,
  is unique and depends continuously on $G$
  \begin{equation}
    \norm{\vartheta^h(\mac,G)}_{H^1(\Box)}\leq C \abs{\sym G}, \label{eq:energyEstimate}
  \end{equation}
  where $C$ only depends on the constant $c_0$ of~\eqref{eq:Lbounds},
  the reference quadrature rule, and the limit $\gamma$.
\end{lemma}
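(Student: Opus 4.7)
The plan is a standard application of Lax--Milgram on the finite-dimensional Hilbert space $\cV^h$, equipped with the $H^1(\Box;\R^3)$-inner product. Define
\begin{align*}
a^h(u,v) &\colonequals \cG_\Box^h\bigl[\bigl\langle \mathbb L(\mac,\cdot)\nabla_\gamma u,\nabla_\gamma v\bigr\rangle\bigr], \\
F^h(v) &\colonequals -\cG_\Box^h\bigl[\bigl\langle \mathbb L(\mac,\cdot)\iota(y_3 G),\nabla_\gamma v\bigr\rangle\bigr],
\end{align*}
so that \eqref{eq:corrector_discrete} reads $a^h(\vartheta^h(\mac,G),\psi^h)=F^h(\psi^h)$ for all $\psi^h\in\cV^h$. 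Continuity of $a^h$ and $F^h$ will follow from the upper bound in \eqref{eq:Lbounds} applied pointwise at the quadrature nodes, Cauchy--Schwarz for the discrete sum, and a uniform estimate $\cG_\Box^h[|\sym\nabla_\gamma u|^2]^{1/2}\leq C\|u\|_{H^1(\Box)}$ for $u\in\cV^h$. The latter reduces, via translation and axis-aligned dilation to the reference cube $\hat K$, to norm equivalence on the finite-dimensional space spanned by $\sym\nabla_\gamma u$ with $u\in[\cQ_1(\hat K)]^3$. An analogous computation gives $|F^h(v)|\leq C|\sym G|\,\|v\|_{H^1(\Box)}$, using $|\sym\iota(y_3 G)|=|y_3|\,|\sym G|$ and boundedness of $\cG_\Box^h[y_3^2]$.

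The coercivity of $a^h$ is the main point. Pointwise ellipticity at the quadrature nodes yields $a^h(u,u)\geq c_0^{-1}\cG_\Box^h[|\sym\nabla_\gamma u|^2]$. The crucial step is then the reverse estimate
\begin{equation*}
\cG_\Box^h\bigl[|\sym\nabla_\gamma u|^2\bigr]\geq c\,\|\sym\nabla_\gamma u\|_{L^2(\Box)}^2\qquad\text{for all }u\in\cV^h,
\end{equation*}
with $c>0$ independent of $h$. This will be established element by element on $\hat K$: the $\cQ_1(\hat K)$-unisolvency of the quadrature-point subset, combined with positivity of the reference weights, forces the quadrature seminorm to be a genuine norm on the finite-dimensional space of matrix fields arising from $\sym\nabla_\gamma[\cQ_1(\hat K)]^3$, hence equivalent to the $L^2(\hat K)$-norm; the axis-aligned scaling to a general $K$ preserves the constant. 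Summing over elements and invoking a Korn--Poincaré inequality on $\cV$ (the affine part $\iota(M)y$ satisfies $|\sym\iota(M)|=|M|$, and the in-plane periodic zero-mean part is controlled by periodic Korn--Poincaré) then yields $a^h(u,u)\geq c'\|u\|_{H^1(\Box)}^2$.

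With continuity and coercivity in hand, Lax--Milgram on $\cV^h$ provides a unique $\vartheta^h(\mac,G)\in\cV^h$ solving \eqref{eq:corrector_discrete}. Testing with $\psi^h=\vartheta^h(\mac,G)$ and combining coercivity with the bound on $F^h$ gives \eqref{eq:energyEstimate}; linearity of the discrete problem in $G$ then delivers the (linear, hence continuous) dependence on $G$. The main obstacle is justifying the reverse quadrature estimate uniformly in $h$: the integrand $|\sym\nabla_\gamma u|^2$ is not itself in $\cQ_1$, so exactness of the quadrature on $\cQ_1$ is not directly useful; instead one must exploit the unisolvency of the quadrature points to reinterpret the discrete seminorm as a norm on the correct finite-dimensional matrix-field space, and then transfer the equivalence to arbitrary mesh elements by scaling, which is precisely the technical content deferred to Appendix~\ref{sec:appendixA}.
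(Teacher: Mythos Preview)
Your proposal is correct and follows essentially the same Lax--Milgram strategy as the paper's proof. One minor simplification in the paper: rather than treating the reverse quadrature estimate as a separate technical obstacle, the paper observes that for $u\in[\cQ_1(K)]^3$ the entries of $\sym\nabla_\gamma u$ are themselves in $\cQ_1(K)$, so the norm equivalence $\widehat C^{-1}\|g\|_{L^2(K)}\leq \cG_K[|g|^2]^{1/2}\leq \widehat C\|g\|_{L^2(K)}$ for $g\in\cQ_1(K)$ (derived in Appendix~\ref{sec:appendixA} exactly via the unisolvency argument you sketch) applies componentwise without further work; the paper then packages the Korn--Poincar\'e step into the single two-sided inequality $C_\gamma^{-1}(|\sym G|^2+\|\vartheta^h\|_{H^1(\Box)}^2)\leq \|\sym(\iota(y_3G)+\nabla_\gamma\vartheta^h)\|_{L^2(\Box)}^2\leq C_\gamma(|\sym G|^2+\|\vartheta^h\|_{H^1(\Box)}^2)$.
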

\begin{proof}
  For convenience introduce the space
  \begin{equation*}
    \cQ_1^\text{dG}(\Box; \R^{3 \times 3})
    \colonequals
    \Big\{\Phi\in L^2(\Box;\R^{3\times 3})\,:\,\Phi\vert_K\in [\cQ_1(K)]^{3 \times 3}\text{ for all $K\in\microgrid$}\Big\},
  \end{equation*}
  and the bilinear form
  \begin{align*}
    a^h(\mac,\cdot,\cdot) : \cQ_1^\text{dG}(\Box; \R^{3 \times 3}) \times \cQ_1^\text{dG}(\Box; \R^{3 \times 3}) \to\R,
    \qquad
    a^h(\mac,\Phi,\Psi)\colonequals \cG^h_{\Box}\big[\langle\mathbb L(\mac,\cdot)\Phi,\Psi\rangle\big].
  \end{align*}
  In Appendix~\ref{sec:appendixA} it is shown that by the unisolvency of the quadrature rule
  on $\cQ_1$, we have that
  \begin{equation*}
    \forall g\in\cQ_1(K)\,:
    \qquad
    \widehat C^{-1}\|g\|_{L^2(K)}\leq \cG_{K}[|g|^2]^\frac12\leq \widehat C\|g\|_{L^2(K)},
  \end{equation*}
  for a $\widehat C$ that can be chosen independently of $K\in\microgrid$ and the mesh size $h$.
  Using then the ellipticity and boundedness of $\mathbb L(\mac,\cdot)$, we deduce that
  \begin{align*}
    \frac{1}{\tilde C}\|\sym\Phi\|^2_{L^2(\Box)}\leq a^h(\mac,\Phi,\Phi),
    \qquad \text{and} \qquad
    |a^h(\mac,\Phi,\Psi)|\leq \tilde C\|\sym\Phi\|_{L^2(\Box)}\|\sym\Psi\|_{L^2(\Box)},
  \end{align*}
  where $\tilde C$ only depends on the reference quadrature rule and $c_0$ from \eqref{eq:Lbounds}.
  We further see that for any $G\in\R^{2\times 2}$ and $\vartheta^h\in\mathcal V^h$,
  we have $\iota(y_3G) \in \cQ_1^\text{dG}(\Box; \R^{3 \times 3})$ and $\nabla_\gamma\vartheta^h\in \cQ_1^\text{dG}(\Box; \R^{3 \times 3})$, and
  \begin{equation*}
    \frac{1}{C_\gamma}(|\sym G|^2+\|\vartheta^h\|_{H^1(\Box)}^2)\leq \|\sym(\iota(y_3G)+\nabla_\gamma\vartheta^h)\|_{L^2(\Box)}^2\leq C_\gamma(|\sym G|^2+\|\vartheta^h\|^2_{H^1(\Box)}),
  \end{equation*}
  where $C_{\gamma}$ only depends on $\gamma$. 
  We note that the corrector problem \eqref{eq:corrector_discrete} for $\vartheta^h(\mac,G)$ takes the form
  \begin{equation*}
    a^h(\mac,\nabla_\gamma\vartheta^h(\mac,G),\nabla_\gamma\psi^h)=-a^h(\mac,\iota(y_3G),\nabla_\gamma\psi^h)
    \qquad
    \text{for all $\psi^h\in\mathcal{V}^h$}.
  \end{equation*}
  In view of the estimates derived for the bilinear form $a^h$ and the boundedness of the right-hand side, we see that the existence, uniqueness and the estimate \eqref{eq:energyEstimate} follow from the Lax--Milgram theorem.
\end{proof}

With the help of the approximate correctors $\vartheta^h_i \colonequals \vartheta^h(s,G_i)$,
we define the approximate effective
bending stiffness $\Qhomh$ and the approximate effective prestrain $\Beffh$
in analogy to Definitions~\ref{def:effectiveStiffness} and~\ref{def:effectivePrestrain}.

\begin{definition}[Approximate effective bending stiffness $\Qhomh$]
  Let $\mac\in S$, and set $\widehat Q^h(\mac)\in\R^{3\times 3}_{\sym}$
  componentwise by
  \begin{align*}
    \widehat Q_{ij}^h(\mac)
    &\colonequals 
    \cG_\Box^h\Big[ \Big \langle\mathbb{L}(\mac,y)\big(\iota(y_3 G_i) + \nabla_\gamma \vartheta_{i}^h(y)\big),\,\big(\iota(y_3 G_j) + \nabla_\gamma \vartheta_{j}^h(y)\big) \Big \rangle\Big],
    \quad
    \forall i,j=1,2,3.
  \end{align*}
  Then define for all $G\in\R^{2\times 2}$
  \begin{align*}
    \Qhomh(s,G)
    \colonequals
    \sum_{i,j=1}^3\widehat Q_{ij}^h\widehat{\sym G_i}\widehat{\sym G_j}.
  \end{align*}
  The homogenized stiffness tensor $\mathbb L^{\gamma,h}_{\hom}(\mac)\in\operatorname{Lin}(\R^{2\times 2};\R^{2\times 2})$ is given by the identity
  \begin{equation*}
    \forall F,G\in\R^{2\times 2}\,:\quad
    \big\langle \mathbb L^{\gamma,h}_{\hom}(s) F, G \big\rangle
    \colonequals
    \frac12\Big(Q^{\gamma,h}_{\hom}(s,F+G)-Q^{\gamma,h}_{\hom}(s,F)-Q^{\gamma,h}_{\hom}(s,G)\Big).
  \end{equation*}
\end{definition}

Note that
\begin{equation*}
  \big\langle\mathbb L^{\gamma,h}_{\hom}(\mac)F,G \big\rangle
  =
  \Big\langle\mathbb L^{\gamma,h}_{\hom}(\mac)\sym F,\sym G \Big\rangle
\end{equation*}
for all $F, G \in \R^{2 \times 2}$, as for the original three-dimensional stiffness $\mathbb L$.

\begin{definition}[Approximate effective prestrain $\Beffh$]
  Let $\mac\in S$. We define $\widehat B^h(\mac)\in \R^3$ componentwise by
  \begin{align*}
    \widehat B_{i}^h(\mac)
    \colonequals \cG_\Box^h \Big[\Big\langle\mathbb{L}(\mac,y)\big(\iota(y_3G_i) + \nabla_\gamma\vartheta_{i}^h(y)\big),\,B(\mac,y) \Big\rangle\Big],
    \qquad
    i=1,2,3.
  \end{align*}
  We then set
  \begin{equation*}
    \Beffh(s)\colonequals\sum_{i=1}^3 \widehat{B}_{\textup{eff}, i}^h(s) G_i \in \Rsym2,
  \end{equation*}
  where $\widehat{B}_{\textup{eff}}^h(s)\in\R^3$ is the unique solution to
  \begin{equation*}
    \Qhomc(\mac) \widehat{B}_{\textup{eff}}^h(\mac) = \widehat{B}^h(\mac).
  \end{equation*}
\end{definition}

\begin{remark}[Variational characterization of the corrector]\label{rem:variational_characterization}
  The effective bending stiffness and its approximation can be equivalently characterized by the following minimization problems:
  \begin{align}
    \label{min:cont}
    Q^{\gamma}_{\hom}(\mac, G) & = \min_{\vartheta\in\mathcal V}
    \int_\Box Q(s,y,\iota(y_3G)+\nabla_\gamma\vartheta)\,\dd y,
    \\
    \label{min:discrete}
    Q^{\gamma,h}_{\hom}(\mac, G) & = \min_{\vartheta^h\in\mathcal V^h}
    \cG_\Box^h\Big[Q(s,y,\iota(y_3G)+\nabla_\gamma\vartheta^h)\Big].
  \end{align}
  Indeed, this follows since the minimization problems are convex and coercive,
  and the Euler--Lagrange equations are exactly \eqref{eq:corrector_equation} and \eqref{eq:corrector_discrete}, respectively.
  In particular, the minimizers of \eqref{min:cont} and \eqref{min:discrete} are the correctors $\vartheta(\mac,G)$ and $\vartheta^h(\mac,G)$, respectively.
\end{remark}

The approximate effective quantities inherit the ellipticity properties
of their three-dimensional counterparts.

\begin{lemma}[Boundedness and ellipticity]
  Let $\mac\in S$ and assume that $\mathbb L(\mac,y)$ satisfies
  the ellipticity bound \eqref{eq:Lbounds} for all $y\in R_h(\Box)$.
  Then for all $F, G \in \R^{2 \times 2}$ we have
  \begin{align}
    \nonumber
    \langle\mathbb L^{\gamma,h}_{\hom}(\mac)F,G\rangle
    & \leq
    C|\sym F||\sym G|,
    \\
    \label{eq:Lhom_ellipticity}
    \langle\mathbb L^{\gamma,h}_{\hom}G,G\rangle
    & \geq
    \frac1{C}|\sym G|^2,
    \\
    \nonumber
    |B^{\gamma,h}_{\rm eff}(\mac)|
    & \leq
    C\max_{y\in R_h(\Box)}|B(\mac,y)|,
  \end{align}
  where $C>0$ denotes a constant that only depends on the ellipticity constant $c_0$ from \eqref{eq:Lbounds}, the reference quadrature rule, and the limit $\gamma$.
\end{lemma}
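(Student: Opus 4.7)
The three estimates can all be deduced from (i) the variational characterization \eqref{min:discrete} of $Q^{\gamma,h}_{\hom}$ recalled in Remark~\ref{rem:variational_characterization}, (ii) the quadrature-$L^2$ norm equivalence on $\cQ_1^{\text{dG}}$ stated in the proof of Lemma~\ref{L:approx:corr} (and justified in Appendix~\ref{sec:appendixA}), and (iii) the corrector coercivity identity
\begin{equation*}
  \tfrac{1}{C_\gamma}\bigl(|\sym G|^{2}+\|\vartheta^h\|^2_{H^1(\Box)}\bigr)
  \leq
  \|\sym(\iota(y_3 G)+\nabla_\gamma \vartheta^h)\|_{L^2(\Box)}^{2}
  \leq
  C_\gamma\bigl(|\sym G|^{2}+\|\vartheta^h\|^{2}_{H^1(\Box)}\bigr),
\end{equation*}
also established inside the proof of Lemma~\ref{L:approx:corr}. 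The three claims will be proved in the order: boundedness and ellipticity of $Q^{\gamma,h}_{\hom}$, then the polarization to $\mathbb L^{\gamma,h}_{\hom}$, then the bound on $\Beffh$.

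\textbf{Upper bound for $\mathbb L^{\gamma,h}_{\hom}$.} Inserting $\vartheta^h = 0$ in \eqref{min:discrete} and using the pointwise bound $Q(\mac,y,F)\le c_0|\sym F|^2$ at quadrature points (which follows from \eqref{eq:Lbounds}) together with $|\sym \iota(y_3 G)|\leq |y_3||\sym G|$ yields
\begin{equation*}
  Q^{\gamma,h}_{\hom}(\mac,G) \leq c_0\,\cG^h_\Box\bigl[|\sym \iota(y_3 G)|^2\bigr] \leq C|\sym G|^2,
\end{equation*}
since the quadrature weights sum to $|\Box|=1$. Because $Q^{\gamma,h}_{\hom}(\mac,\cdot)$ is a nonnegative quadratic form acting only on symmetric parts, Cauchy--Schwarz applied to its polarization gives $\langle \mathbb L^{\gamma,h}_{\hom}(\mac)F,G\rangle\leq C|\sym F||\sym G|$ for all $F,G\in\R^{2\times 2}$.

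\textbf{Ellipticity of $\mathbb L^{\gamma,h}_{\hom}$.} Let $\vartheta^h=\vartheta^h(\mac,G)$ be the discrete corrector. Using first the pointwise ellipticity $Q(\mac,y,F)\geq \tfrac1{c_0}|\sym F|^2$ at quadrature points, then the fact that $\iota(y_3G)+\nabla_\gamma\vartheta^h \in \cQ_1^{\text{dG}}(\Box;\R^{3\times 3})$ so that the quadrature-$L^2$ norm equivalence applies, and finally the coercivity estimate from Lemma~\ref{L:approx:corr}, we obtain
\begin{equation*}
  Q^{\gamma,h}_{\hom}(\mac,G)
  \geq \tfrac1{c_0}\,\cG^h_\Box\bigl[|\sym(\iota(y_3 G)+\nabla_\gamma \vartheta^h)|^2\bigr]
  \geq \tfrac1{C}\,\|\sym(\iota(y_3 G)+\nabla_\gamma \vartheta^h)\|_{L^2(\Box)}^2
  \geq \tfrac1{C}|\sym G|^2,
\end{equation*}
which is precisely~\eqref{eq:Lhom_ellipticity}.

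\textbf{Bound on $\Beffh$.} From the definition of $\widehat B^h_i(\mac)$, the symmetry property \eqref{eq:L_symmetry} and the pointwise bound \eqref{eq:Lbounds} at quadrature points, together with the Cauchy--Schwarz inequality for the quadrature functional $\cG^h_\Box$ (which is a discrete inner product because weights are positive), we obtain
\begin{equation*}
  |\widehat B^h_i(\mac)|
  \leq
  c_0\,\cG^h_\Box\bigl[|\sym(\iota(y_3 G_i)+\nabla_\gamma\vartheta^h_i)|^2\bigr]^{1/2}
  \cG^h_\Box\bigl[|\sym B(\mac,\cdot)|^2\bigr]^{1/2}.
\end{equation*}
The first factor is $\leq C|\sym G_i| \leq C$ by the ellipticity just established and the identity $\cG^h_\Box[\langle \mathbb L(\iota(y_3G_i)+\nabla_\gamma\vartheta^h_i),\iota(y_3G_i)+\nabla_\gamma\vartheta^h_i\rangle]=Q^{\gamma,h}_{\hom}(\mac,G_i)$ combined with the upper bound just proved. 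The second factor is bounded by $\max_{y\in R_h(\Box)}|B(\mac,y)|$ since weights sum to one. Hence $|\widehat B^h(\mac)|\leq C\max_{y\in R_h(\Box)}|B(\mac,y)|$. Finally, the ellipticity of $\mathbb L^{\gamma,h}_{\hom}$ translates, via the basis $G_1,G_2,G_3$, into a uniform lower bound on the smallest eigenvalue of the symmetric positive-definite matrix $\widehat Q^h(\mac)$; inverting the system $\widehat Q^h(\mac)\widehat B^h_{\rm eff}(\mac)=\widehat B^h(\mac)$ and using $|\Beffh(\mac)|\simeq|\widehat B^h_{\rm eff}(\mac)|$ (orthonormality of the basis) yields the third claim.

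\textbf{Main obstacle.} No single step is hard once the tools are assembled, but the delicate chain is the ellipticity: one has to move back and forth between the pointwise bounds on $\mathbb L$, the discrete quadrature seminorm, and the continuous $L^2$-norm on the piecewise $\cQ_1$ strains, exploiting that the quadrature is exact enough on $\cQ_1$ so the two are equivalent uniformly in $h$.
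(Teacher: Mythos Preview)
Your proposal is correct and follows essentially the same approach as the paper: the paper only spells out the ellipticity bound~\eqref{eq:Lhom_ellipticity} explicitly (via pointwise ellipticity of $\mathbb L$, the quadrature-$L^2$ norm equivalence~\eqref{eq:equivnorm} on $\cQ_1^{\text{dG}}$, and the orthogonality that yields $\int_\Box|\sym(\iota(y_3G)+\nabla_\gamma\vartheta^h)|^2\dd y\geq \tfrac1{12}|\sym G|^2$, then infimizing over~$\vartheta^h$), and dismisses the other two estimates as ``a standard argument''. Your treatment of the upper bound (test with $\vartheta^h=0$ and polarize) and of the prestrain bound (Cauchy--Schwarz in the discrete inner product, then invert $\widehat Q^h$ using the uniform ellipticity) is exactly the standard argument the paper has in mind; the only minor imprecision is the phrase ``by the ellipticity just established'' when bounding the first factor in the $\widehat B^h_i$ estimate---what you actually use there is the pointwise lower bound on $\mathbb L$ (to pass from $|\sym F|^2$ to $\langle\mathbb L F,F\rangle$) together with the upper bound on $Q^{\gamma,h}_{\hom}$, not the ellipticity of $\mathbb L^{\gamma,h}_{\hom}$.
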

\begin{proof}
  The estimates follow from the corresponding bounds \eqref{eq:Lbounds} by a standard argument.
  We only prove the lower bound~\eqref{eq:Lhom_ellipticity}.
  Indeed, for $G\in\R^{2\times 2}$ and $\vartheta^h\in\mathcal V^h$
  set $F \colonequals \iota(y_3 G)+\nabla_\gamma\vartheta^h$, and note that $F$ and $\sym F$ belong to
  the space $\cQ_1^\text{dG}(\Box;\R^{3 \times 3})$ of discontinuous $\cQ_1$ matrix fields.
  Thus, by \eqref{eq:Lbounds} and the quadrature error bound \eqref{eq:equivnorm} we find that
  \begin{equation*}
    \cG_\Box^h\big[\langle \mathbb L(\mac)F,\,F\big\rangle]
    \geq
    \frac{1}{c_0}\cG_\Box^h\Big[\langle \sym F,\,\sym F\big\rangle\Big]
    \geq \frac{\tilde C}{c_0}\int_{\Box}|\sym F|^2\,\dd y,
  \end{equation*}
  where $\tilde C$ is a constant only depending on the reference quadrature rule.
  Multiplying out the expression $\abs{\sym F}^2 = \abs{\iota(y_3 G)+\nabla_\gamma\vartheta^h}^2$,
  and noting that the mixed terms vanish because $\sym G$ and $\sym\nabla_\gamma\vartheta^h$
  are orthogonal in $L^2(\Box)$,
  we find that $\int_{\Box}|\sym F|^2\,\dd y=|\sym G|^2+\int_{\Box}|\sym\nabla_\gamma\vartheta^h|^2\,\dd y$,
  and thus, in particular, $\cG_\Box^h\big[\langle \mathbb L(\mac)F,\,F\big\rangle]\geq\frac{\tilde C}{c_0}|\sym G|^2$.
  Next, using the minimization formulation \eqref{min:discrete} of $\Qhomh$, by infimizing over $\vartheta^h$
  we obtain $\Qhomh(\mac,G)\geq\frac{\tilde C}{c_0}|\sym G|^2$ and thus the claimed
  lower-bound follows with $C \colonequals \frac{\tilde C}{c_0}$.
\end{proof}

It is obvious that the constant $C$ of the previous lemma depends on the constant $c_0$.
However, it is less obvious that it also depends on the reference quadrature rule (a minor fact that is not mentioned in \cite{rumpf2023two}).
We note that if the reference quadrature rule was exact for
$\cQ_2(\hat K)$, then we could choose $\tilde C=1$.

\medskip

For the convergence as $h\to 0$ we require $\mathbb L$ and $B$ to be locally Lipschitz on the elements $K\in\microgrid$. 

\begin{proposition}[Convergence of the micro-discretization]\label{prop:approxcor}
  Let $\mac\in S$. Assume that $\mathbb L(\mac,\cdot)$ satisfies the ellipticity bound \eqref{eq:Lbounds}, and assume local Lipschitz regularity of the oscillations of $\mathbb L(s,\cdot)$ and $B(s,\cdot)$ with respect to the elements of $\microgrid$ in the sense that
  \begin{equation*}
    \lim_{h\to 0}h\max_{K\in\microgrid}\Big(\|\nabla_y\mathbb L(\mac,\cdot)\|_{L^\infty(K)}\big(1+\|B(\mac,\cdot)\|_{W^{1,\infty}(K)}\big)\Big)=0.
  \end{equation*}
  Then, as $h \to 0$,
  \begin{equation*}
    \Qhomh(s,\cdot) \to \Qhom(s,\cdot)
    \qquad \text{and} \qquad
    \Beffh(\mac)\to\Beff(\mac).
  \end{equation*}
  In particular,
  \begin{equation*}
    \mathbb L^{\gamma,h}_{\hom}(\mac)\to \mathbb L^{\gamma}_{\hom}(\mac)
    \qquad \text{and} \qquad
    \|\vartheta^h_i(\mac)-\vartheta_i(\mac)\|_{H^1(\Box)}\to 0,
    \quad \forall i=1,2,3.
  \end{equation*}
\end{proposition}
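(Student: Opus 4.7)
The plan is a Galerkin/$\Gamma$-convergence argument. First I would invoke the uniform $H^1$-bound from Lemma~\ref{L:approx:corr} to extract a weakly convergent subsequence $\vartheta_i^h(s) \rightharpoonup \vartheta_i^*$ in $H^1(\Box)$; weak closedness of $\cV$ gives $\vartheta_i^* \in \cV$. The heart of the proof is then to show that $\vartheta_i^*$ satisfies the continuous corrector equation~\eqref{eq:corrector_equation}, so that uniqueness of the continuous corrector forces $\vartheta_i^* = \vartheta_i(s)$ and promotes the weak convergence to the whole sequence.

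For this I would test~\eqref{eq:corrector_discrete} with $\psi^h \colonequals I_h \psi$ for smooth $\psi \in \cV \cap C^\infty(\overline\Box;\R^3)$ (a dense subset by mollification of the periodic part) and prove
\begin{equation*}
  \cG_\Box^h\big[\langle \mathbb L(s,\cdot) F_i^h, \nabla_\gamma \psi^h\rangle\big] \longrightarrow \int_\Box \langle \mathbb L(s,y) F_i^*, \nabla_\gamma \psi\rangle \dd y,
\end{equation*}
with $F_i^h \colonequals \iota(y_3 G_i) + \nabla_\gamma \vartheta_i^h$ and $F_i^* \colonequals \iota(y_3 G_i) + \nabla_\gamma \vartheta_i^*$. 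I would do this elementwise: on each $K$, first replace $\nabla_\gamma \psi^h$ by the smooth $\nabla_\gamma \psi$ at an $L^\infty$-cost of $O(h\|\psi\|_{C^2})$, absorbed by the norm equivalence on $\cQ_1^{\text{dG}}$; then replace $\mathbb L(s,\cdot)$ by the constant $\bar{\mathbb L}_K \colonequals \mathbb L(s, y_K)$, incurring an error controlled by the hypothesis $h\max_K\|\nabla_y\mathbb L\|_{L^\infty(K)}\to 0$; finally, handle the remaining quadrature of $\langle F_i^h, \bar{\mathbb L}_K^\top \nabla_\gamma\psi\rangle$ on $K$ by subtracting the constant $\bar{\mathbb L}_K^\top\nabla_\gamma\psi(y_K)$, against which the quadrature is exact since $F_i^h|_K \in \cQ_1(K)^{3\times 3}$, leaving a remainder of order $h\|\psi\|_{C^1}\|F_i^h\|_{L^2(K)}$. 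The limiting integral is then obtained by weak--strong pairing of $F_i^h \rightharpoonup F_i^*$ in $L^2$ against $\mathbb L\,\nabla_\gamma\psi \in L^2$.

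Convergence of the effective quantities then follows from a Galerkin orthogonality trick. Testing~\eqref{eq:corrector_discrete} against $\vartheta_j^h$ yields
\begin{equation*}
\Qhomch_{ij}(s) = \cG_\Box^h\big[\langle \mathbb L(s,\cdot) F_i^h, \iota(y_3 G_j)\rangle\big],
\end{equation*}
a pairing of the weakly convergent $F_i^h$ against the smooth polynomial $\iota(y_3 G_j)$. The same quadrature analysis gives $\Qhomch \to \Qhomc$ entrywise, hence $\Qhomh(s,\cdot)\to\Qhom(s,\cdot)$, and the polarisation identity yields $\Lhomh(s)\to\Lhom(s)$. Strong $H^1$-convergence $\vartheta_i^h \to \vartheta_i$ is then obtained by expanding the coercive form $\cG_\Box^h[\langle \mathbb L(F_i^h - \tilde F_i^{\delta,h}), F_i^h - \tilde F_i^{\delta,h}\rangle]$ with $\tilde F_i^{\delta,h}\colonequals\iota(y_3 G_i)+\nabla_\gamma I_h \tilde\vartheta_i^\delta$ for a smooth approximation $\tilde\vartheta_i^\delta \to \vartheta_i$ in $H^1$, sending $h\to 0$ then $\delta\to 0$ and using the just-established energy convergence. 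Finally, $\Bc^h(s)\to \Bc(s)$ follows from the analogous quadrature reasoning applied to $\langle \mathbb L(s,\cdot) F_i^h, B(s,\cdot)\rangle$ --- here the Lipschitz control of $B$ in the hypothesis is what allows replacing $B$ by its centre value on each $K$ at an $O(h\|B\|_{W^{1,\infty}(K)})$-cost --- and invertibility of $\Qhomc(s)$ gives $\Beffch(s)\to\Beffc(s)$.

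The main obstacle is precisely this quadrature analysis: $\cQ_1$-exactness does \emph{not} integrate $\langle F^h, G^h\rangle$ exactly for two $\cQ_1^{\text{dG}}$-fields even when the coefficient $\mathbb L$ is constant on the element, so one cannot pair two weakly convergent discrete fields directly under the quadrature. The resolution is the Galerkin orthogonality trick, which ensures that $F_i^h$ is always tested against a globally smooth object --- $\iota(y_3 G_j)$, $B(s,\cdot)$, or $\nabla_\gamma\psi$ --- so that the modulus of continuity of the test supplies the $O(h)$-factor needed to beat the inverse estimates of $F_i^h$ and close the argument.
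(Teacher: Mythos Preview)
Your proposal is correct, but it takes a genuinely different route from the paper. The paper exploits the variational characterization of the correctors (Remark~\ref{rem:variational_characterization}) via a sandwich argument: from $Q^\gamma_{\hom}(s,G) \le \int_\Box Q(s,y,\iota(y_3G)+\nabla_\gamma\vartheta^h_G)\,\dd y = Q^{\gamma,h}_{\hom}(s,G)-e^h \le \cG_\Box^h[Q(s,\cdot,\iota(y_3G)+\nabla_\gamma\tilde\vartheta^h)]-e^h$, where $\tilde\vartheta^h\in\cV^h$ is any recovery sequence for $\vartheta_G$, the right-hand side converges back to $Q^\gamma_{\hom}(s,G)$. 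This squeezes $Q^{\gamma,h}_{\hom}\to Q^\gamma_{\hom}$ and simultaneously exhibits $\vartheta^h_G$ as a minimizing sequence for the uniformly convex continuous functional, delivering strong $H^1$-convergence in one stroke---no preliminary weak-limit identification is needed. The quadrature errors are not handled by your freeze-the-coefficient argument but by directly invoking the Ciarlet-type estimate $|E^h_K[f\,\partial_ig^h\,\partial_j\hat g^h]|\le Ch\|f\|_{W^{1,\infty}(K)}\|\partial_ig^h\|_{L^2(K)}\|\partial_j\hat g^h\|_{L^2(K)}$ from Appendix~\ref{sec:appendixA}, which controls the full quadratic form at once. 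Your route---weak compactness, PDE identification of the limit, then the Galerkin-orthogonality rewrite $\Qhomch_{ij}=\cG_\Box^h[\langle\mathbb L F_i^h,\iota(y_3G_j)\rangle]$ to linearize the passage to the limit---is more elementary in that it avoids the packaged Ciarlet machinery, but it separates what the paper does in one sandwich into three stages (weak convergence, energy convergence, strong upgrade). Both approaches correctly recognize that one cannot pair two discrete fields under a $\cQ_1$-exact quadrature; you resolve this via Galerkin orthogonality against a smooth test, the paper via the minimization structure.
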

\begin{proof}
  Throughout the proof $C$ denotes a constant that can change from line to line,
  but which can be chosen independent of $h$ and $\mac\in\dom$.
  
  \begin{enumerate}[wide,label=\textbf{Step~\arabic*}, labelindent=0pt]
    \item \label{proofstep:quadrature_errors} (Quadrature estimate)
    
    For convenience define the general quadrature error operators
    $E^h_{\Box}[f]\colonequals\cG^h_\Box[f]-\int_\Box f\dd y$ and $E^h_{K}[f]\colonequals\cG^h_K[f]-\int_K f \dd y$.
    We claim that for any $G\in\R^{2\times 2}$ and any $u^h\in\mathcal V^h$ we have
    \begin{multline*}
      \abs[\Big]{E^h_{\Box}\Big[
      \Big\langle\mathbb{L}(\mac,y)\big(\iota(y_3 G) + \nabla_\gamma u^h(y)\big),\,\big(\iota(y_3 G) + \nabla_\gamma u^h(y)\big)\Big\rangle
      \Big]}
      \\
      \leq Ch\sup_{K\in\microgrid}\|\nabla_y\mathbb L\|_{\infty}(|G|^2+\|u^h\|^2_{H^1(\Box)}).
    \end{multline*}
    Since $E_\Box^h$ is linear we can multiply out the scalar product on the left
    to obtain three terms. With the triangle inequality it then suffices to show that
    \begin{align}
      \label{prop:approxcor:qe1}
      \Big|E^h_K\Big[\langle\mathbb L(s,\cdot)\iota(y_3G),\,\iota(y_3G)\rangle\Big]\Big|
      & \leq Ch\|\nabla_y\mathbb L\|_{L^\infty(K)}|K||G|^2,\\
      \label{prop:approxcor:qe2}
      \Big|E^h_K[\langle\mathbb L(s,\cdot)\nabla_\gamma u^h,\,\iota(y_3G)\rangle]\Big|
      & \leq Ch\|\nabla_y\mathbb L\|_{L^\infty(K)}|K|^\frac12|G|\|u^h\|_{H^1(K)},\\
      \label{prop:approxcor:qe3}
      \Big|E^h_K[\langle\mathbb L(s,\cdot)\nabla_\gamma u^h,\,\nabla_\gamma u^h\rangle]
      & \leq Ch\|\nabla_y\mathbb L\|_{L^\infty(K)}\|u^h\|^2_{H^1(K)}.
    \end{align}  
    These estimates follow from the quadrature error estimates given in Appendix~\ref{sec:appendixA}:
    For \eqref{prop:approxcor:qe1}, use \eqref{eq:quadest2} with $f=\mathbb L(s,\cdot)\iota(y_3G)$ and $g^h=\iota(y_3G)$.
    Estimate \eqref{prop:approxcor:qe2} follows from \eqref{eq:quadest0} with $f=\mathbb L(s,\cdot)^\top\iota(y_3G)$, $g^h=u^h$, and $\hat g^h=y_j$, $j=1,2$ (so that $\partial_j\hat g_j=1$).
    Finally, for \eqref{prop:approxcor:qe3} also use \eqref{eq:quadest0}, this time with $f=\mathbb L(s,\cdot)$, $g^h=\hat g^h=u^h$.
    
    \item (Convergence of the bending stiffness)\label{proofstep:convergenceQhom}
    
    We then show that for all $G\in\R^{2\times 2}$ we have
    \begin{equation*}
      \lim_{h\to 0}Q^{\gamma,h}_{\hom}(\mac, G)=Q^{\gamma}_{\hom}(\mac, G).
    \end{equation*}
    For the argument let $\vartheta_G\colonequals\vartheta(\mac, G)$
    and $\vartheta_G^h\in\mathcal V^h$ denote the exact and approximate correctors at~$\mac$
    associated with $G$, respectively.
    As argued in Remark~\ref{rem:variational_characterization},
    the corrector $\vartheta_G$ minimizes the functional
    \begin{equation}\label{eq:contfunct}
      \mathcal V\ni u\mapsto \int_\Box Q\big(s,y,\iota(y_3G)+\nabla_\gamma u\big)\dd y.
    \end{equation}
    Likewise, the approximate corrector $\vartheta^h_G$ minimizes the discrete functional
    \begin{equation*}
      \mathcal V^h\ni u^h\mapsto \cG^h_{\Box}\Big[ Q\big(s,y,\iota(y_3G)+\nabla_\gamma u^h\big)\Big].
    \end{equation*}
    We thus conclude that
    \begin{align}
      \label{eq:qhom_definition_recall}
      Q^{\gamma}_{\hom}(s,G)
      & = \int_\Box Q\big(s,y,\iota(y_3G)+\nabla_\gamma\vartheta_G \big)\dd y\\
      \label{eq:corrector_convergence}
      & \leq \int_\Box Q \big(s,y,\iota(y_3G)+\nabla_\gamma\vartheta_G^h \big)\dd y\\
      \label{eq:qhomh_convergence}
      & = \cG^h_{\Box}\Big[ \Big\langle \mathbb L(\mac,y)\big(\iota(y_3G)+\nabla_\gamma\vartheta_G^h\big),\,\big(\iota(y_3G)+\nabla_\gamma\vartheta_G^h\big)\Big \rangle\Big]-e^h,
    \end{align}
    where
    \begin{equation*}
      e^h
      \colonequals
      E^h_{\Box}\Big[\Big\langle \mathbb L(\mac,y)\big(\iota(y_3G)+\nabla_\gamma\vartheta_G^h\big),\,\big(\iota(y_3G)+\nabla_\gamma\vartheta_G^h\big)\Big \rangle\Big].
    \end{equation*}
    Then, choose a sequence $\widetilde{\vartheta}^h\in\mathcal V^h$ such that $\widetilde{\vartheta}^h\to \vartheta_G$ strongly in $H^1(\Box)$ as $h\to 0$.
    By appealing to the minimality of $\vartheta^h_G$, we get
    \begin{align}
      \nonumber
      Q^{\gamma}_{\hom}(s,G)
      & \leq \cG^h_{\Box}\Big[\Big\langle \mathbb L(\mac,y)\big(\iota(y_3G)+\nabla_\gamma\widetilde\vartheta^h\big),\,\big(\iota(y_3G)+\nabla_\gamma\widetilde\vartheta^h\big)\Big\rangle\Big]-e^h
      \\
      \label{eq:qhomh_bound_two_quad_errors}
      & = \int_\Box\Big\langle\mathbb L(\mac,y)(\iota(y_3G)+\nabla_\gamma\widetilde\vartheta^h),(\iota(y_3G)+\nabla_\gamma\widetilde\vartheta^h)\Big\rangle \dd y-e^h+\widetilde e^h,
    \end{align}
    where
    \begin{equation*}
      \widetilde e^h
      \colonequals
      E^h_{\Box}\Big[\Big\langle \mathbb L(\mac,y)\big(\iota(y_3G)+\nabla_\gamma\widetilde\vartheta^h\big),\,\big(\iota(y_3G)+\nabla_\gamma\widetilde\vartheta^h\big)\Big\rangle\Big].
    \end{equation*}
    Now, since $\widetilde\vartheta^h\to \vartheta_G$ strongly in $H^1(\Box)$,
    and because the quadrature errors  $e^h$ and $\widetilde e^h$ vanish thanks to~\ref{proofstep:quadrature_errors},
    we deduce that~\eqref{eq:qhomh_bound_two_quad_errors} converges to $Q^\gamma_{\hom}(s,G)$
    as $h \to 0$, and thus we conclude that all terms between \eqref{eq:qhom_definition_recall} and \eqref{eq:qhomh_bound_two_quad_errors}
    converge to $Q^\gamma_{\hom}(s,G)$ as $h\to 0$.
    In particular, this includes the numerical integral in \eqref{eq:qhomh_convergence},
    which is precisely $Q^{\gamma,h}_{\hom}(s,G)$. Hence, we conclude that $Q^{\gamma,h}_{\hom}(s,G)\to Q^{\gamma}_{\hom}(s,G)$ for all $G\in\R^{2\times 2}$ as claimed.
    
    As the terms $\mathbb L^{\gamma,h}_{\hom}(\mac)$ and $\mathbb L^\gamma_{\hom}(\mac)$
    depend continuously on $Q^{\gamma,h}_{\hom}(s,G)$ and $Q^{\gamma}_{\hom}(s,G)$,
    respectively, we conclude that $\mathbb L^{\gamma,h}_{\hom}(\mac)\to \mathbb L^\gamma_{\hom}(\mac)$
    as well.
    To show convergence of the correctors $\vartheta_i^h$, $i=1,2,3$,
    from the convergence of the integral~\eqref{eq:corrector_convergence} we conclude
    that $\vartheta^h_G$ is a minimizing sequence for the continuum functional \eqref{eq:contfunct}.
    Since the latter is uniformly convex in $\mathcal V$ we conclude that $\vartheta^h_G\to\vartheta_G$
    strongly in $H^1(\Box)$ for any $G \in \R^{2\times 2}_{\sym}$,
    in particular for the basis elements $G = G_1, G_2, G_3$.
    
    \item (Convergence of the effective prestrain)
    
    It remains to prove the convergence of $B^{\gamma,h}_{\rm eff}(s)$. To this end
    we first prove convergence of the prestrain coefficients $B^{\gamma,h}_{\textup{eff},i}$
    for $i=1,2,3$.
    The quadrature error results of Appendix~\ref{sec:appendixA} allow to bound
    \begin{equation*}
      \abs[\Big]{E^h_\Box\Big[\Big \langle\mathbb L(s,y)\big(\iota(y_3G_i)+\nabla_\gamma\vartheta_i^h\big),\,B(s,y)\Big\rangle\Big]}
      \leq
      Ch\max_{K\in\microgrid}\|\mathbb L(s,\cdot)\|_{W^{1,\infty}(K)}\big(1+\|B(s,\cdot)\|_{W^{1,\infty}(K)}\big).
    \end{equation*}
    This follows again by multiplying out the left-hand side, and then applying once
    \eqref{eq:quadest2} with $f=\mathbb L^\top(s,\cdot) B(s,\cdot)$, $g^h=\iota(y_3 G_i)$,
    and once \eqref{eq:quadest0} with $f=\mathbb L^\top(s,\cdot) B(s,\cdot)$, $g^h=\vartheta_i^h$, $\hat g^h=y_j$, $j=1,2$, combined with the continuity estimate~\eqref{eq:energyEstimate} for $\vartheta^h_i$.
    
    Since the right-hand side of this quadrature estimate vanishes for $h\to 0$
    by assumption on the regularity of $\mathbb L$ and $B$, and since the constant
    is independent of $h$, we conclude that
    \begin{align*}
      \lim_{h\to 0}B^{\gamma,h}_{\textup{eff},i}(s)
      & =
      \lim_{h\to 0}\int_\Box \Big\langle\mathbb L(s,y)\big(\iota(y_3G_i)+\nabla_\gamma\vartheta_i^h\big),\,B(s,y)\Big\rangle \dd y
      \\
      & =
      \int_\Box\Big\langle\mathbb L(s,y)\big(\iota(y_3G_i)+\nabla_\gamma\vartheta_i\big),\,B(s,y)\Big\rangle \dd y
      \\
      & =
      B^\gamma_{\textup{eff},i}(s),
    \end{align*}
    where we used the strong convergence of $\nabla_\gamma\vartheta^h_i\to \nabla_\gamma\vartheta_i$ in $L^2(\Box)$, which we have already established in \ref{proofstep:convergenceQhom}.
    Finally, of course, the convergence of the coefficients $B^{\gamma,h}_{\textup{eff},i}$
    implies convergence of $B^{\gamma,h}_{\textup{eff}}$ itself.
    \qedhere  
  \end{enumerate}
\end{proof}

\begin{remark}[Quantitative convergence]
  In \cite{rumpf2023two} the authors proved a stronger statement, namely convergence of $Q^{\gamma,h}_{\hom}$
  and $\vartheta^h_i$ as $h\to 0$ with a linear rate in $h$. This requires the higher regularity
  $\|\vartheta_i^h\|_{H^2(\Box;\R^3)}\leq C$ for a constant $C$ independent of $h$,
  which in turn requires to assume that $\mathbb L(s,\cdot)$ is globally Lipschitz on $\Box$.
  Unfortunately, this assumption is often too restrictive, as it excludes typical composite materials
  with discontinuous elastic moduli.
  
  Our $\Gamma$-convergence proof for the fully discrete problem does not require the
  approximate microstructure problems to converge with a rate. However, such a rate
  would certainly be an important ingredient for any potential future proof of
  more than mere $\Gamma$-convergence.
\end{remark}

For the discretization of the macroscopic problem we require the coefficients $\mathbb L^{\gamma,h}_{\hom}(s)$ and $\Beffh(s)$ to be sufficiently regular in $\mac$, with bounds that are uniform in $h$. The main point of the following lemma is that $\vartheta^h_i$, $\Qhomh$ and $\Beffh$ inherit the local regularity in the variable~$s$ from  $\mathbb L$ and $B$.
The set $U$ will later play the role of a mesh element~$T$ of the macroscopic triangulation.
\begin{lemma}[Differentiability in $\mac$]
  Let $U\subseteq S$ be open and convex.
  Let $\mathbb L(\mac)$ satisfy the ellipticity bound \eqref{eq:Lbounds} for all $\mac\in U$, and assume that
  \begin{equation}\label{L:diff:refass}
    \mathbb L(\cdot,r), B(\cdot,r)\in W^{1,\infty}(U)\qquad\text{for all quadrature points $r\in R_h(\Box)$.}
  \end{equation}
  Then $\vartheta^h_i(\cdot)\in W^{1,\infty}(U;\mathcal V^h)$, $i=1,2,3$,
  $\mathbb L^{\gamma,h}_{\hom}(\cdot),B^{\gamma,h}_{\rm eff}(\cdot)\in W^{1,\infty}(S)$ and there exists a constant $C$ only depending on $c_0$, the reference quadrature scheme,
  and the limit $\gamma$ such that
  \begin{align*}
    \|\nabla_s\vartheta^h_i\|_{L^\infty(U)}+\|\nabla_s\mathbb L^{\gamma,h}_{\hom}\|_{L^\infty(U)}
    & \leq
    C\max_{r\in R_h(\Box)}\|\nabla_s\mathbb L(\cdot,r)\|_{L^\infty(U)},
    \\
    \\
    \|\nabla_s B^{\gamma,h}_{\rm eff}\|_{L^\infty(U)}
    & \leq C\max_{r\in R_h(\Box)}\Big\||\nabla_s\mathbb L(\cdot,r)||B(\cdot,r)|+|\nabla_sB(\cdot,r)|\Big\|_{L^\infty(U)},
    \\
  \end{align*}  
\end{lemma}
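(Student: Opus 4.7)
The plan is to exploit three features of the discrete problem: (i) the space $\cV^h$ is finite-dimensional and does not depend on $\mac$; (ii) the quadrature operator $\cG_\Box^h$ touches $\mathbb L(\cdot,y)$ and $B(\cdot,y)$ only at the finite set of quadrature points $R_h(\Box)$, so that assumption~\eqref{L:diff:refass} makes the bilinear form $a^h(\mac,\cdot,\cdot)$ and the load of~\eqref{eq:corrector_discrete} Lipschitz in $\mac$ with constant controlled by $\max_r\|\nabla_s\mathbb L(\cdot,r)\|_{L^\infty(U)}$; and (iii) by Lemma~\ref{L:approx:corr} the form $a^h$ is coercive on $\cV^h$ uniformly in $\mac\in U$ and $h>0$. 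The proof then proceeds in three steps, in the order: correctors, effective stiffness, effective prestrain.

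\textbf{Step 1 (correctors).} Fix $\mac,\mac'\in U$ and set $\delta\colonequals\vartheta^h_i(\mac)-\vartheta^h_i(\mac')\in\cV^h$. Subtracting~\eqref{eq:corrector_discrete} at $\mac$ and $\mac'$, after adding and subtracting $a^h(\mac,\vartheta^h_i(\mac'),\psi^h)$, gives
\begin{equation*}
a^h(\mac,\nabla_\gamma\delta,\nabla_\gamma\psi^h)=-\cG_\Box^h\Bigl[\bigl\langle(\mathbb L(\mac,\cdot)-\mathbb L(\mac',\cdot))(\iota(y_3G_i)+\nabla_\gamma\vartheta^h_i(\mac')),\,\nabla_\gamma\psi^h\bigr\rangle\Bigr]
\end{equation*}
for all $\psi^h\in\cV^h$. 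Testing with $\psi^h=\delta$, invoking the coercivity of $a^h$, the pointwise bound $|\mathbb L(\mac,r)-\mathbb L(\mac',r)|\leq|\mac-\mac'|\max_r\|\nabla_s\mathbb L(\cdot,r)\|_{L^\infty(U)}$, the quadrature equivalence of Appendix~\ref{sec:appendixA}, and the $h$-uniform energy estimate~\eqref{eq:energyEstimate} on $\vartheta^h_i(\mac')$ yields
\begin{equation*}
\|\delta\|_{H^1(\Box)}\leq C\,|\mac-\mac'|\max_{r\in R_h(\Box)}\|\nabla_s\mathbb L(\cdot,r)\|_{L^\infty(U)}.
\end{equation*}
Convexity of $U$ then gives $\vartheta^h_i\in W^{1,\infty}(U;\cV^h)$ with the claimed bound.

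\textbf{Step 2 (effective stiffness).} I use the variational formula~\eqref{min:discrete}: inserting $\vartheta^h(\mac',G)$ as a competitor in the problem at $\mac$, and vice versa, produces
\begin{equation*}
\bigl|\Qhomh(\mac,G)-\Qhomh(\mac',G)\bigr|\leq C\,|\mac-\mac'|\,|G|^2\max_{r\in R_h(\Box)}\|\nabla_s\mathbb L(\cdot,r)\|_{L^\infty(U)}
\end{equation*}
via the same pointwise Lipschitz bound on $\mathbb L$, quadrature equivalence, and energy estimate as in Step~1. Polarization transfers this bound to the components of the tensor $\mathbb L^{\gamma,h}_{\hom}$.

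\textbf{Step 3 (effective prestrain).} Decompose
\begin{multline*}
\widehat B^h_i(\mac)-\widehat B^h_i(\mac')=\cG_\Box^h\bigl[\langle(\mathbb L(\mac)-\mathbb L(\mac'))(\iota(y_3G_i)+\nabla_\gamma\vartheta^h_i(\mac)),B(\mac)\rangle\bigr]\\
+\cG_\Box^h\bigl[\langle\mathbb L(\mac')\nabla_\gamma(\vartheta^h_i(\mac)-\vartheta^h_i(\mac')),B(\mac)\rangle\bigr]+\cG_\Box^h\bigl[\langle\mathbb L(\mac')(\iota(y_3G_i)+\nabla_\gamma\vartheta^h_i(\mac')),B(\mac)-B(\mac')\rangle\bigr],
\end{multline*}
and bound the three pieces using, respectively, the Lipschitz estimate on $\mathbb L$ coupled with $|B|$, the Step-1 estimate for $\vartheta^h_i$ coupled with $|B|$, and the Lipschitz estimate on $B$. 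This delivers the bound on $\nabla_s\widehat B^h$. Since $\widehat Q^h(\mac)$ is Lipschitz in $\mac$ by Step~2 with uniformly bounded inverse by Lemma~\ref{L:approx:corr}, differentiating $\widehat Q^h(\mac)\widehat B^{\gamma,h}_{\rm eff}(\mac)=\widehat B^h(\mac)$ and rearranging
\begin{equation*}
\nabla_s\widehat B^{\gamma,h}_{\rm eff}=(\widehat Q^h)^{-1}\bigl(\nabla_s\widehat B^h-(\nabla_s\widehat Q^h)\widehat B^{\gamma,h}_{\rm eff}\bigr)
\end{equation*}
gives the claimed estimate; the contribution of $\nabla_s\widehat Q^h$ carries a factor $|\widehat B^{\gamma,h}_{\rm eff}|\leq C\max_r|B(\cdot,r)|$ and is absorbed into $\max_r(|\nabla_s\mathbb L(\cdot,r)||B(\cdot,r)|+|\nabla_sB(\cdot,r)|)$. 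The main obstacle throughout is ensuring that constants remain independent of $h$; this is exactly what the $h$-uniform coercivity of Lemma~\ref{L:approx:corr} and the norm equivalence of Appendix~\ref{sec:appendixA} provide, and once Step~1 is established, Steps~2 and~3 are algebraic consequences of it combined with the ingredients already used in the proof of Lemma~\ref{L:approx:corr}.
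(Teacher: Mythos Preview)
Your proof is correct and rests on the same three pillars as the paper's argument: finite-dimensionality of $\cV^h$, the $h$-uniform coercivity of Lemma~\ref{L:approx:corr}, and Lipschitz dependence of the data on $\mac$ at the quadrature points. The execution, however, differs in flavour. The paper writes the discrete corrector problem abstractly as $A(\mac)\vartheta^h_i(\mac)=F(\mac)$ with $A(\mac)\in\operatorname{Lin}(\cV^h;\cV^h)$ and $F(\mac)\in\cV^h$, observes that assumption~\eqref{L:diff:refass} forces $A,F\in W^{1,\infty}(U)$ (here finite-dimensionality is essential), uses the uniform invertibility of $A$ to get $A^{-1}\in W^{1,\infty}(U)$, and then differentiates $A\vartheta^h_i=F$ directly to obtain $A\,\partial_{s_j}\vartheta^h_i=\partial_{s_j}F-(\partial_{s_j}A)\vartheta^h_i$, from which the bound on $\nabla_s\vartheta^h_i$ is read off. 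Your Step~1 reaches the same conclusion via difference quotients and the convexity of $U$; this is the finite-difference version of the paper's operator calculus and is equally valid. For Step~2 the paper appeals to the product rule and the already-established estimate on $\nabla_s\vartheta^h_i$, whereas your competitor argument based on~\eqref{min:discrete} is a slightly cleaner route that bypasses the correctors altogether. Step~3 is essentially identical in both treatments. What you gain is a more self-contained and elementary argument; what the paper gains is a cleaner algebraic bookkeeping once the operator framework is in place.
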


\begin{proof}
  In this proof we write $\langle \cdot,\,\cdot\rangle_{H^1(\Box)}$ for the $H^1$ scalar product
  in the finite-dimensional space~$\mathcal V^h$.
  Furthermore, we define $A(\mac)\in\operatorname{Lin}(\mathcal V^h;\mathcal V^h)$ and $F(\mac)\in\mathcal V^h$ by the identities
  \begin{align*}
    \langle A(\mac)u^h,\psi^h\rangle_{H^1(\Box)}
    & \colonequals \cG^h_\Box\Big[\big\langle\mathbb L(s,y)\nabla_\gamma u^h,\nabla_\gamma \psi^h\big\rangle\Big],
    \\
    \langle F(\mac),\psi^h\rangle_{H^1(\Box)}
    & \colonequals -\cG^h_\Box\Big[\big\langle\mathbb L(s,y)\iota(y_3 G_i),\nabla_\gamma \psi^h\big\rangle\Big],
  \end{align*}
  for every $u^h, \psi^h \in \mathcal V^h$.
  Since the space $\mathcal V^h$ has finite dimension, we conclude that \eqref{L:diff:refass}
  implies $A,F\in W^{1,\infty}(U)$, and for $j=1,2$ we have
  \begin{equation}\label{eq:idAderiv}
    \begin{aligned}
      \langle \partial_{s_j}A(\mac)u^h,\psi^h\rangle_{H^1(\Box)}
      & =
      \cG^h_\Box\Big[\big\langle \partial_{s_j}\mathbb L(s,y)\nabla_\gamma u^h,\nabla_\gamma \psi^h\big\rangle\Big],
      \\
      \langle \partial_{s_j} F(\mac),\psi^h\rangle_{H^1(\Box)}
      & =
      -\cG^h_\Box\Big[\big\langle \partial_{s_j}\mathbb L(s,y)\iota(y_3 G_i),\nabla_\gamma \psi^h\big\rangle\Big],
    \end{aligned}
  \end{equation}
  for a.e.~$\mac$ in $U$. Furthermore, by Lemma~\ref{L:approx:corr}, $A$ is uniformly invertible, i.e., for all $u^h\in\mathcal V^h$ we have $\|A^{-1}(\mac)u^h\|_{H^1(\Box)}\leq C\|u^h\|_{H^1(\Box)}$ where $C$ only depends on $c_0$ and on the local quadrature rule. Hence, since $A(\cdot)\in W^{1,\infty}(U)$, we conclude that $A^{-1}(\cdot)\in W^{1,\infty}(U)$ as well.
  
  To show the assertions, we first discuss the differentiability of the correctors $\vartheta^h_i(s)$ with respect to $s$.
  Note that their definition \eqref{eq:corrector_discrete} as the solution
  of a PDE takes the form $A(s)\vartheta^h_i(s)=F(s)$, and thus
  we have $\vartheta^h_i(s)=A^{-1}(\mac)F(\mac)$. The product rule for weak derivatives implies
  that $A^{-1}F\in W^{1,\infty}(U;\mathcal V^h)$, and thus the claimed regularity of $\vartheta^h_i$
  follows. To deduce the claimed estimates,
  we apply $\partial_{s_j}$ 
  to the equation $A(s)\vartheta_i^h=F$,
  and get
  \begin{align*}
    A\partial_{s_j}\vartheta^h_i & = \partial_{s_j}F-\partial_{s_j}A\vartheta^h.
  \end{align*}
  Now the claimed estimate for $\nabla_s\vartheta^h_i$ 
  follows from the bound
  on the operator norm of $A^{-1}$,
  the identities \eqref{eq:idAderiv}, and the definition of the quadrature scheme.
  The regularity and estimates for $\mathbb L^{\gamma,h}_{\hom}$ and $B^{\gamma,h}_{\rm eff}$ follow from the estimates on $\vartheta^h_i$ and the identities \eqref{eq:idAderiv} by a direct computation that again invokes the product rule for weak partial derivatives.
\end{proof}

\section{Discretization of the macroscopic problem}
\label{sec:MacroDiscretization}

In this section we present a discretization of the reformulated energy functional $\tenergy$
defined in~\eqref{eq:reformulated_energy},
which combines a discretization of the microscopic problem on a scale $h$
with a discretization of the macroscopic
bending problem on a mesh of size $H$. The discretization of $\tenergy$ will thus
be given by a discrete functional $\widetilde{\mathcal E}^{\gamma,h}_H$.
This new functional depends on $h$ via the approximate effective quantities,
but also on~$H$, because the discretization operates with approximate
differential operators and uses numerical quadrature.
We show that $\widetilde{\mathcal E}^{\gamma,h}_H$ $\Gamma$-converges to $\tenergy$
whenever $(h,H)\to 0$.

For the macroscopic discretization the main difficulty is the treatment of the
isometry constraint~\eqref{IsometryConstraint}. We follow Bartels \cite{bartels2013approximation,bartels2017bilayer} and use a
Discrete Kirchhoff Triangle (DKT) finite element discretization.
Such a discretization uses $H^1$-deformations that satisfy the isometry constraint exactly
at the mesh vertices, but not elsewhere.
The resulting discretization is therefore non-conforming in the sense that the
discrete deformations are neither elements of $H^2(S;\R^3)$
nor of $H^1_\iso(S;\R^3)$.

\subsection{Discrete Kirchhoff Triangle finite elements}\label{sec:discreteDKT}

We give a brief summary of discrete Kirchhoff triangles and refer to~\cite{batoz1980study,braessfem,bartels2013approximation} for more details.
Denote by $\macrogrid$ a shape-regular family of triangulations of $S$ parametrized by
the maximum edge length $H$.
We denote the set of vertices of $\macrogrid$ by $\cN_H$,
and by $H_T$ the diameter of any $T\in \macrogrid$.

The construction of discrete Kirchhoff triangles is based on two $H^1$-conforming finite element spaces 
that are coupled via an approximate first derivative $\nabla_H$ called
discrete gradient operator.
While the spaces are typically given for scalar-valued functions, we give here
a direct definition for vector fields.

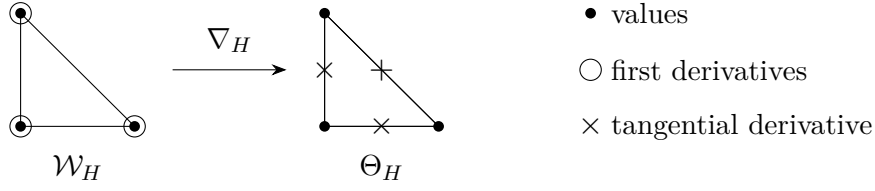
\begin{figure}
  \begin{center}
    \begin{tikzpicture}
      \coordinate (L1) at (0, 0);
      \coordinate (L2) at (1.5, 0);
      \coordinate (L3) at (0, 1.5);
      
      \draw (L1) -- (L2) -- (L3) -- cycle;
      
      \draw (L1) circle (4pt);
      \fill (L1) circle (2pt);
      \draw (L2) circle (4pt);
      \fill (L2) circle (2pt);
      \draw (L3) circle (4pt);
      \fill (L3) circle (2pt);
      
      \node[below=of L1.south, xshift=0.75cm, yshift=0.75cm] {$\cW_H$};
      
      \coordinate (R1) at (4, 0);
      \coordinate (R2) at (5.5, 0);
      \coordinate (R3) at (4, 1.5);
      
      \draw (R1) -- (R2) -- (R3) -- cycle;
      
      \fill (R1) circle (2pt);
      \fill (R2) circle (2pt);
      \fill (R3) circle (2pt);
      
      \draw (R1) -- (R2) node[midway] {\large$\times$};
      \draw (R2) -- (R3) node[midway, sloped] {\large$\times$};
      \draw (R3) -- (R1) node[midway] {\large$\times$};
      
      \node[below=of R1.south, xshift=0.75cm, yshift=0.75cm] {$\Theta_H$};
      
      \draw[-{Stealth[length=2mm]}] (2.0, 0.75) -- (3.5, 0.75);
      
      \node[above=0.1cm, xshift=2.75cm, yshift=0.75cm] {$\nabla_H$};
      
      \coordinate (Le1) at (7.5,0);
      \coordinate (Le2) at (7.5,0.75);
      \coordinate (Le3) at (7.5,1.5);
      
      \node at (Le1) {\large $\times$};
      \node [label=right:{tangential derivative}] at (Le1) {};
      \draw (Le2) circle (4pt);
      \node [label=right:{first derivatives}] at (Le2) {};
      \fill (Le3) circle (2pt);
      \node [label=right:{values}] at (Le3) {};
    \end{tikzpicture}
  \end{center}
  \caption{Degrees of freedom of the deformation space $\mathcal{W}_H$ and
  the image of $\nabla_H$ in the rotation space $\Theta_H$}
  \label{fig:DKT-spaces}
\end{figure}

Let $\cP_k(T;\R^3)$ be the space of $\R^3$-valued $k$th-order polynomials on the triangle~$T$.
Deformations are approximated in the space
\begin{multline*}
  \cW_H \colonequals \Big\{w_H \in C(\bar{S};\R^3)\; : \;  w_{H|T} \in \pRed(T;\R^3) \text{ for all $T\in \macrogrid$}, \\
  \text{and $\nabla w_H$ is continuous at the $\cN_H$}\Big\},
\end{multline*}
where $\pRed(T;\R^3)$ denotes the space of $\R^3$-valued reduced cubic polynomials defined on $T\in\macrogrid$ by
\begin{equation*}
  \pRed(T;\R^3)
  \colonequals
  \Big\{p \in \cP_3(T;\R^3) \; : \;
  6p(z_T) = \sum_{i=1}^{3} \big(2p(z_i)- \nabla p(z_i) \cdot (z_i - z_T)\big)  \Big\},
\end{equation*}
with $z_T$ the center of mass of $T$.

The local space $\pRed(T;\R^3)$ is 27-dimensional, and the function values
and partial derivatives at the nodes of the triangulation form a unisolvent set of degrees of freedom
for the space $\cW_H$ (see Figure~\ref{fig:DKT-spaces}).
Using these degrees of freedom, we can define a nodal interpolation operator
$\cI_H^{\cW}: H^3(S;\R^3) \to \cW_H$ on $T\in \macrogrid$ via the conditions
$\cI_H^\cW[w](z) = w(z)$ and $\nabla\cI_H^\cW[w](z) = \nabla w(z)$ for all $z \in \cN_H\ \cap T$.\footnote{Note that the interpolation $\cI_H^\cW$ is well-defined on $H^3(S)$ due to
the embedding $H^3(S) \hookrightarrow C^1(S)$.
} 
By \cite[Theorem~4.4.4]{brennerScott2008mathematical}
we have for all $w\in H^{3}(T;\R^3)$ and $T\in \macrogrid$ the interpolation estimate
\begin{equation}
  \label{eq:DKTinterpolationEstimate}
  \norm{w - \cI_H^{\cW}w}_{L^3(T)} + H_T  \norm{\nabla w - \nabla \cI_H^{\cW}w}_{L^3(T)} + H_T^2  \norm{\nabla^2w - \nabla^2\cI_H^{\cW}w}_{L^3(T)}
  \leq C H_T^3 \norm{\nabla^3w}_{L^3(T)}
\end{equation}
with a constant $C > 0$ independent of  $H_T>0$.

The main point of DKT elements is that derivatives of discrete deformations are
replaced by a particular approximation. For this, the DKT element uses a second space
\begin{align*}
  \Theta_H & \colonequals \left\{ \theta_H \in C(S; \R^{3 \times 2}) \; : \; \theta_{H}|_T \in \cP_2(T;\R^{3 \times 2})
  \text{ for all $T\in \macrogrid$} \right\},
\end{align*}
and defines a custom gradient operator $\nabla_H : \mathcal{W}_H \to \Theta_H$.
Let $\mathcal{E}_H$ be the set of edges of the triangulation, and for each edge $e \in \mathcal{E}_H$
call $z_e^1, z_e^2$ the endpoints, and  $t_e,n_e$, and $z_e$ the unit tangent,
unit normal vector and edge midpoint of $e$, respectively.

\begin{definition}[Discrete gradient operator $\nabla_H$]
  Let $w_H \in \cW_H$. Then the discrete gradient $\theta_H \colonequals \nabla_H w_H \in \Theta_H$ of $w_H$ is uniquely defined by the
  following conditions: 
  \begin{align*}
    \theta_H(z) &= \nabla w_H(z)  &\forall z &\in \cN_H \\
    \theta_H(z_e) \cdot t_e &= \nabla w_H(z_e) \cdot t_e  &\forall e &\in \cE_H\\ 
    \theta_H(z_e)\cdot n_e &= \frac{1}{2} \left(\nabla w_H(z_e^1) + \nabla w_H(z_e^2) \right) \cdot n_e
    &\forall e &\in \cE_H.
  \end{align*}
\end{definition}

The operator $\nabla_H : \cW_H \to \Theta_H$ is linear.
It is, however, not surjective: Its range is a subspace of $\Theta_H$
consisting of functions with an affine normal derivative along the edges.

The discrete gradient operator $\nabla_H$ admits several desirable properties, cf.~\cite{braessfem,bartels2015numerical,bartels2022stable}:
In particular,
\begin{align}\nonumber
  \frac{1}{C} \norm{\nabla w_H}_{L^2(\dom)} 
  &\leq \norm{\nabla_H w_H}_{L^2(\dom)}   \leq C \norm{\nabla w_H}_{L^2(\dom)},\\
  \shortintertext{and}
  \norm{\nabla_H w_H - \nabla w_H}_{L^2(T)}
  &\leq C H_T \norm{\dhess w_H}_{L^2(T)}\label{eq:discGradProp4}
\end{align}
for all $w_H\in \cW_H$ and $T\in\macrogrid$. The constant $C>0$ is independent of
the mesh and element sizes $H$ and $H_T$, respectively.

Note that the constants in the local estimates \eqref{eq:DKTinterpolationEstimate} and \eqref{eq:discGradProp4} depend on the shape-regularity of the element $T\in\macrogrid$. However, since we assume that the family of triangulations $\macrogrid$ is shape-regular, these constants can be chosen uniformly for all $T\in\macrogrid$ and all $H>0$.

\subsection{The fully discrete energy functional}

We now apply the DKT discretization to the reformulated energy functional $\tenergy$
of~\eqref{eq:reformulated_energy}, and obtain the discrete energy functional
\begin{equation}
  \label{eq:discreteEnergy}
  \dtenergy(\deform_H)
  \colonequals
  \begin{cases}
    \macroquad{\Big\langle\boldsymbol{L}^{\gamma,h}_{\hom}(\mac)\dhess \deform_H,\,\big(\dhess \deform_H+2n_H\otimes B^{\gamma,h}_{\rm eff}(\mac)\big)\Big\rangle} & \text{if $\deform \in \cA_{H}$}, \\
    +\infty & \text{else}.
  \end{cases}
\end{equation}
The symbol $\cG^H_S$ denotes a quadrature operator approximating integration over $S$.
Note how the discrete quantity $\dhess v_H$ replaces the Hessian $\nabla^2 v_H$.
Likewise, the actual surface normal is replaced by
\begin{equation} \label{eq:discreteNormalField}
  n_H \colonequals \lagint\big[\partial_1^H v_H \times \partial_2^H v_H\big],
\end{equation}
where $\partial_1^H v_H$ and $\partial_2^H v_H$ denote the first and second column
of $\dgrad v_H$, respectively.
The operator~$\lagint$ denotes standard first-order Lagrange interpolation on $\macrogrid$.
It is applied to ensure that $\abs{n_H} \le 1$ everywhere---equality will typically hold
only at the nodes of $\mathcal{T}_H$.
The admissible set $\cA_H$ consists of all discrete deformations that satisfy
the isometry constraint and the boundary conditions at the nodes of the triangulation $\macrogrid$
\begin{multline*}
  \cA_H \colonequals
  \Big \{
  v_H \in \cW_H \; : \;
  v_{H}(z) = \widetilde u_{D}(z),\
  \nabla v_H(z) = \nabla\widetilde u_D(z) \text{ for all } z\in \cN_H\cap \Gamma_D,
  \\
  \nabla v_H(z)^T \nabla v_H(z) =  I_{2\times 2} \ \text{ for all } z\in \cN_H
  \Big \}.
\end{multline*}

It remains to discuss the quadrature operator $\cG^H_S$ used
in~\eqref{eq:discreteEnergy} to approximate the integral $\int_S\dd\mac$.
To construct such an operator, for every triangle $T \in \macrogrid$ we fix a quadrature rule
$\{(\mu^T_i,q^T_i)\}_{i=1,\ldots,n}$ consisting of quadrature points $q^T_i$ and weights $\mu^T_i>0$. 
We then define the local and global quadrature operators by
\begin{alignat*}{2}
  \cG_T^H & : C^0(T;\R) \to \R,          & \qquad \cG_T^H[f] &\colonequals  \sum_{i=1}^{n} \mu_i^T f(r^T_i),
  \\
  \cG_\dom^H & : C^0(\dom;\R) \to \R,    &\cG_\dom^H[f] &\colonequals \sum_{T\in \macrogrid}^{} \cG_T^H[f].
\end{alignat*}

\begin{assumption}[Exactness of $\cG_T^H$ and $\cG_\dom^H$]
  \label{assumption:macroQuadrature}
  The element quadrature operator $\cG_T^H$ is exact on quadratic polynomials for all $T\in\macrogrid$.
\end{assumption}

\begin{remark}[Existence of minimizers of $\dtenergy$]
  The functional $\dtenergy$ is continuous on $\cA_H$, which is a non-empty and 
  closed subset of the finite-dimensional normed vector space $\cW_{H}$.
  From Theorem~\ref{T:main}\ref{item:compactness} below we further deduce that $\dtenergy$ is coercive.
  Hence, by the Weierstrass theorem, $\dtenergy$ attains its minimum on $\cA_H$.
\end{remark}

\subsection{\texorpdfstring{$\Gamma$}{Gamma}-convergence for \texorpdfstring{$(h,H)\to 0$}{(h,H) to 0}}

We now prove $\Gamma$-convergence of the fully discrete energy functionals $\dtenergy$
as $(h,H)\to 0$. 
Importantly, our argument does not require a convergence rate of the microstructure
discretization with respect to~$h$.
It turns out that with regard to the discretization on scale $h$, we only require the approximate bending stiffness $\mathbb L^{\gamma,h}_{\hom}$ to be uniformly (in $h$) elliptic,
the prestrain $B^{\gamma,h}_{\rm eff}$ to be uniformly bounded,
and both to converge pointwise to $\mathbb L^\gamma_{\hom}$
and $B^\gamma_{\rm eff}$, respectively---our argument does not rely on the details of the
approximation in $h$.
To highlight this in the theorem below, we formulate it in terms of general,
converging sequences $\mathbb L^h$ and $B^h$, which generalize the quantities
$\mathbb L^{\gamma,h}_{\hom}$ and $B^{\gamma,h}_{\rm eff}$ of Chapter~\ref{sec:MicroNumerics}.

In the following we use the DKT finite element discretization for the
macroscopic problem as introduced in the previous chapters, and
Assumptions~\ref{assumption:macroQuadrature} on the numerical quadrature.
Furthermore, for the handling of Dirichlet boundary conditions we suppose:
\begin{assumption}[Boundary conditions]
  \label{Assumption:BoundaryData}  \mbox{} 
  \begin{enumerate}[label=(\alph*)]
    \item $\dom\subseteq\R^2$ is a bounded polygonal domain.
    \item \label{item:bd_approximability}
    \emph{(Approximability)}.
    The set $\cA\cap H^3(S;\R^3)$ is dense in $\cA$ in the strong topology of $H^2(S;\R^3)$.
    \item \emph{(Compatibility)}.
    $\cI_H^{\cW}[\cA\cap H^3(S;\R^3)]\subset\cA_H$.
  \end{enumerate}
\end{assumption}

Recall that $\macrogrid$ is a shape-regular family of triangulations of $S$,
parametrized by the maximum edge length $H$.

\begin{theorem}[$\Gamma$-convergence]\label{T:main}
  For every $h\geq 0$ let $\mathbb L^h:S\to\operatorname{Lin}(\R^{2\times 2},\R^{2\times 2})$
  and $B^h:S\to\R^{2\times 2}_{\sym}$ be measurable, and such that
  $\langle\mathbb L^h(\mac)F,G\rangle=\langle\mathbb L^h(\mac)\sym F,\sym G\rangle$
  for all $\mac\in S$ and for all $F,G\in\R^{2\times 2}$.
  Further assume:
  \begin{enumerate}[(H1)]
    \item \label{ass:uniform_ellipticity_and_boundedness}
    (Uniform ellipticity and boundedness). For all $\mac\in S$ and for all $F,G\in\R^{2\times 2}$,
    \begin{align*}
      \langle\mathbb L^h(\mac)F,G\rangle
      & \leq
      c_S|\sym F||\sym G|,
      \\
      \langle\mathbb L^hG,G\rangle
      & \geq
      \frac1{c_S}|\sym G|^2,
      \\
      |B^h(\mac)|
      & \leq
      c_S,
    \end{align*}
    with a constant $c_S>0$ independent of $h$ and $\mac$. 
    \item (Local $W^{1,\infty}$-regularity on the meshes).
    \begin{equation*}
      \lim_{H\to 0}H\max_{T\in\mathcal T_{H}}\|\mathbb L^{h}\|_{W^{1,\infty}(T)}(1+\|B^{h}\|_{W^{1,\infty}(T)})\to 0\text{ uniformly in $h$},
    \end{equation*}
    where we assume that the Sobolev norms of $\mathbb{L}^h$ and $B^h$ are well-defined
    on all mesh elements of the family $\macrogrid$.
    
    \item (Convergence). $\mathbb L^h\to\mathbb L^0$ pointwise on $S$ and $B^h\to B^0$ in $L^2(S)$.
  \end{enumerate}
  Introduce the third-order stiffness $\boldsymbol{L}^h(\mac)\in\operatorname{Lin}(\R^{3\times 2\times 2},\R^{3\times 2\times 2})$ by
  \begin{equation*}
    \langle\boldsymbol{L}^h(\mac) \mathbf{F} , \mathbf{G}\rangle
    \colonequals
    \sum_{i=1}^3 \Big \langle\mathbb L^h(\mac) (e_i\cdot \mathbf{G}),\,e_i\cdot \mathbf{F}\Big\rangle
    \quad\text{for all $\mathbf F, \mathbf G\in\R^{3\times 2\times 2}$},
  \end{equation*}
  and define the energy functionals $\mathcal E^h_H,\mathcal E^0:L^2(S;\R^3)\to\R\cup\{+\infty\}$ by
  \begin{align*}
    \mathcal E^h_H(\deform_H)
    & \colonequals \cG^{H}_S\Big[\Big\langle \boldsymbol{L}^{h}(\mac)\nabla\nabla_{H}\deform_H,\,\nabla\nabla_{H}\deform_H+2n_H \otimes B^{h}(\mac)\Big\rangle\Big],
    \\
    \mathcal E^0(\deform)
    & \colonequals
    \int_S \Big\langle\boldsymbol{L}^0(\mac)\nabla^2\deform, \nabla^2\deform+2 n_{\deform}\otimes B^0(\mac)\Big\rangle\dd s,
  \end{align*}
  for $\deform_H\in\cA_{H}$ and $\deform\in\cA$, and $+\infty$ otherwise.
  Then for any sequence $(h_j,H_j)\to 0$ as $j\to\infty$ the following holds:
  \begin{enumerate}[label=(\alph*)]
    \item \label{item:compactness}
    \emph{(Compactness)}. Let $(\deform_j)$ be a sequence with $\deform_j\in\cA_{H_j}$ and equibounded energy, i.e., 
    \begin{equation}\label{eq:EquicoercivityBound}
      \limsup_{j\to\infty}\mathcal E^{h_j}_{H_j}(\deform_j)<\infty.
    \end{equation}
    Then
    \begin{equation*}
      \limsup\limits_{j\to\infty}\norm{\nabla\nabla_{H_j} \deform_j}_{L^2(\dom;\R^{3\times2\times2})}<\infty.
    \end{equation*}
    In particular, there exists a subsequence (not relabeled) and $\deform\in\cA$ such that
    \begin{equation}\label{eq:comp:conv1}
      \deform_j\to\deform\text{ strongly in }H^1(S),\qquad      \nabla_{H_j}\deform_j\wto \nabla \deform\text{ weakly in $H^1(S)$},
    \end{equation}
    and
    \begin{equation}\label{eq:comp:conv2}
      -n_{H_j}\cdot\nabla\nabla_{H_j}\deform_j\wto \II_{\deform}\qquad\text{weakly in }L^2(S).
    \end{equation}
    \item \label{item:lower_bound}
    \emph{(Lower bound)}. For all sequences $\deform_j\to \deform$ in $L^2(S;\R^3)$, we have
    \begin{equation*}
      \liminf_{j\to \infty} \mathcal E^{h_j}_{H_j}(\deform_j) \geq \mathcal E^0(\deform).
    \end{equation*}
    \item \label{item:recovery_sequence}
    \emph{(Recovery sequence)}. For all $\deform\in \cA$ there exists a sequence $\deform_j\in \cA_{H_j}$ with
    $\deform_j\to \deform$ strongly in $H^1(\dom;\R^3)$ such that
    \begin{equation*}
      \lim_{j\to\infty}\mathcal E^{h_j}_{H_j}(\deform_j)= \mathcal E^0(\deform).
    \end{equation*}
  \end{enumerate}
\end{theorem}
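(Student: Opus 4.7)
My plan is to prove the three parts in order, since the lower bound and recovery arguments rely on the a priori estimates established in the compactness step. Throughout, I would use the reformulation identity $\nabla^{2}v=n_{v}\otimes\II_{v}$ on $\cA$ to recognize the limit energy, and exploit that $\nabla_{H}v_{H}\in\Theta_{H}$ is piecewise $\mathcal P_{2}$, so $\nabla\nabla_{H}v_{H}$ is piecewise $\mathcal P_{1}$ and $|\sym\nabla\nabla_{H}v_{H}|^{2}$ is piecewise $\mathcal P_{2}$; Assumption~\ref{assumption:macroQuadrature} then makes $\cG^{H}_{S}$ exact on this quantity.

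For \textbf{compactness}, I would turn the energy bound into an $L^{2}$-bound on $\nabla\nabla_{H}v_{j}$. Using ellipticity from \ref{ass:uniform_ellipticity_and_boundedness} and Young's inequality, pointwise at every quadrature point,
\begin{equation*}
\big\langle\boldsymbol{L}^{h}\mathbf{F},\mathbf{F}+2\,n_{H}\otimes B^{h}\big\rangle\;\geq\;\tfrac{1}{2c_{S}}|\sym\mathbf{F}|^{2}-C,
\end{equation*}
with $C$ depending only on $c_{S}$ and $|n_{H}|\leq 1$. Exactness of $\cG_{S}^{H}$ on $|\sym\nabla\nabla_{H}v_{j}|^{2}$ then yields $\|\sym\nabla\nabla_{H_{j}}v_{j}\|_{L^{2}(S)}\leq C$. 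The nodal isometry in $\cA_{H}$ gives $|\nabla v_{j}(z)|=\sqrt 2$ at every node $z\in\cN_{H_{j}}$; combined with the fact that $\nabla_{H}v_{j}|_{T}\in\mathcal P_{2}(T)$ is pinned by nodal values and edge-midpoint values (the latter being convex combinations of nodal derivatives and bounded tangential derivatives), one obtains $\|\nabla_{H_{j}}v_{j}\|_{L^{\infty}(S)}\leq C$ uniformly in $j$. Componentwise Korn's inequality then yields $\|\nabla\nabla_{H_{j}}v_{j}\|_{L^{2}(S)}\leq C$, and~\eqref{eq:discGradProp4} implies $\|\nabla v_{j}-\nabla_{H_{j}}v_{j}\|_{L^{2}}\to 0$. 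Up to extraction one has $v_{j}\to v$ in $H^{1}$ and $\nabla_{H_{j}}v_{j}\rightharpoonup\nabla v$ in $H^{1}$; by the compact embedding $H^{1}\hookrightarrow L^{p}$ one can pass the nodal isometry and nodal Dirichlet conditions to the limit, giving $v\in\cA$. Finally,~\eqref{eq:comp:conv2} follows because $n_{H_{j}}$ is a continuous function of $\nabla_{H_{j}}v_{j}$ and converges strongly in $L^{p}$, whereas $\nabla\nabla_{H_{j}}v_{j}\rightharpoonup\nabla^{2}v$ weakly in $L^{2}$, and $-n_{v}\cdot\nabla^{2}v=\II_{v}$ by~\eqref{geometric:identities}.

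For the \textbf{lower bound}, if $\liminf\mathcal E^{h_{j}}_{H_{j}}(v_{j})=+\infty$ there is nothing to prove; otherwise compactness applies and I would split the energy into its quadratic part $\cG^{H_{j}}_{S}[\langle\boldsymbol L^{h_{j}}\nabla\nabla_{H_{j}}v_{j},\nabla\nabla_{H_{j}}v_{j}\rangle]$ and its cross term $2\cG^{H_{j}}_{S}[\langle\boldsymbol L^{h_{j}}\nabla\nabla_{H_{j}}v_{j},n_{H_{j}}\otimes B^{h_{j}}\rangle]$, and analyze each via the quadrature-error bounds of Appendix~\ref{sec:appendixA} combined with hypothesis (H2). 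For the quadratic part, after replacing $\cG^{H_{j}}_{S}$ by the integral modulo a vanishing error, I would write $\boldsymbol L^{h_{j}}=\boldsymbol L^{0}+(\boldsymbol L^{h_{j}}-\boldsymbol L^{0})$, use dominated convergence on the perturbation (uniform bound plus pointwise convergence), and conclude by weak lower semicontinuity of the convex quadratic form with coefficient $\boldsymbol L^{0}$. For the cross term, $n_{H_{j}}\otimes B^{h_{j}}\to n_{v}\otimes B^{0}$ strongly in $L^{2}$ by the strong convergences already identified, which pairs with weak $L^{2}$-convergence of $\nabla\nabla_{H_{j}}v_{j}$.

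For the \textbf{recovery sequence}, given $v\in\cA$ I would use Assumption~\ref{Assumption:BoundaryData}\ref{item:bd_approximability} to pick $v^{\varepsilon}\in\cA\cap H^{3}(S;\R^{3})$ with $v^{\varepsilon}\to v$ in $H^{2}$, set $v^{\varepsilon}_{H}\colonequals\cI^{\cW}_{H}v^{\varepsilon}\in\cA_{H}$ (by the compatibility part of Assumption~\ref{Assumption:BoundaryData}), and invoke~\eqref{eq:DKTinterpolationEstimate} together with the equivalence and approximation properties of $\nabla_{H}$ to obtain strong $L^{2}$-convergence $\nabla\nabla_{H}v^{\varepsilon}_{H}\to\nabla^{2}v^{\varepsilon}$ and strong convergence $n_{H}\to n_{v^{\varepsilon}}$ in every $L^{p}$. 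Combining this with dominated convergence for $\boldsymbol L^{h}\to\boldsymbol L^{0}$, the $L^{2}$-convergence $B^{h}\to B^{0}$, and the quadrature-error estimates, one obtains $\lim_{(h,H)\to 0}\mathcal E^{h}_{H}(v^{\varepsilon}_{H})=\mathcal E^{0}(v^{\varepsilon})$; continuity of $\mathcal E^{0}$ on $\cA$ then yields $\mathcal E^{0}(v^{\varepsilon})\to\mathcal E^{0}(v)$ as $\varepsilon\to 0$, and a standard Attouch-type diagonal argument produces the desired sequence. The \textbf{main obstacles} are the uniform $L^{\infty}$-bound on $\nabla_{H_{j}}v_{j}$ from nodal isometry, the passage from discrete nodal isometry to pointwise isometry of the limit, and keeping the quadrature errors controlled uniformly in $h$ using (H2) jointly with the $H^{1}$-bounds already established.
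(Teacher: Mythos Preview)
Your overall architecture matches the paper's proof closely, but two steps do not go through as written.

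\textbf{Compactness: the $L^\infty$ bound on $\nabla_{H_j}v_j$ is not available from nodal isometry.} You claim the edge-midpoint values of $\nabla_H v_j$ are ``convex combinations of nodal derivatives and bounded tangential derivatives''. The normal component at $z_e$ is indeed an average of nodal gradients, but the tangential component equals the actual tangential derivative $\partial_t v_j(z_e)$ of the underlying reduced cubic. On an edge $e$ from $z_1$ to $z_2$, Hermite interpolation gives
\[
\partial_t v_j(z_e)=\tfrac{3}{2|e|}\big(v_j(z_2)-v_j(z_1)\big)-\tfrac14\big(\partial_t v_j(z_1)+\partial_t v_j(z_2)\big),
\]
which depends on the \emph{vertex values} of $v_j$, and those are not controlled by $|\nabla v_j(z_i)|=\sqrt{2}$. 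So $\|\nabla_{H_j}v_j\|_{L^\infty}$ is not uniformly bounded in general, and your route to Korn's second inequality via an $L^\infty$ bound fails. The paper bypasses this: since $v_j\in\cA_{H_j}$ fixes $\nabla_{H_j}v_j$ at the nodes on $\Gamma_D$, one applies Korn's inequality \emph{with boundary data} componentwise to pass directly from $\|\sym\nabla\nabla_{H_j}v_j\|_{L^2}$ to $\|\nabla_{H_j}v_j\|_{H^1}$, and then Poincar\'e (using the prescribed nodal values of $v_j$ on $\Gamma_D$) for $\|v_j\|_{H^1}$.

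\textbf{Lower bound: the ``dominated convergence on the perturbation'' step is invalid.} After splitting $\boldsymbol L^{h_j}=\boldsymbol L^0+(\boldsymbol L^{h_j}-\boldsymbol L^0)$, you need
\[
\int_S\big\langle(\boldsymbol L^{h_j}-\boldsymbol L^0)\,\nabla\nabla_{H_j}v_j,\nabla\nabla_{H_j}v_j\big\rangle\,\dd s\to 0.
\]
But $\nabla\nabla_{H_j}v_j$ converges only weakly in $L^2$, so $|\nabla\nabla_{H_j}v_j|^2$ is merely bounded in $L^1$ and can concentrate; pointwise convergence plus a uniform bound on $\boldsymbol L^{h_j}-\boldsymbol L^0$ does \emph{not} force this integral to vanish (take coefficients that differ from $\boldsymbol L^0$ on a shrinking set where $|\nabla\nabla_{H_j}v_j|^2$ concentrates). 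Since the perturbation can contribute with a negative sign, the liminf inequality does not follow from your decomposition. The paper handles the varying coefficients without splitting: by the quadratic expansion
\[
\int_S\langle\boldsymbol L^{h_j}G_j,G_j\rangle\;\geq\;2\int_S\langle\boldsymbol L^{h_j}G,G_j\rangle-\int_S\langle\boldsymbol L^{h_j}G,G\rangle,
\]
both terms on the right are products of a strongly convergent factor ($\boldsymbol L^{h_j}G\to\boldsymbol L^0 G$ in $L^2$, by dominated convergence with the \emph{fixed} $G$) and a weakly convergent one, so one passes to the limit directly (this is the content of Lemma~\ref{L:lowerbound_abstract}).

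The remaining pieces of your outline --- the energy estimate via exactness of $\cG^H_S$ on $\mathcal P_2$, replacing $\cG^H_S$ by $\int_S$ using (H2) and the quadrature estimates, the cross term via strong $L^2$ convergence of $n_{H_j}\otimes B^{h_j}$, and the recovery sequence via $\cI^{\cW}_H$ on $\cA\cap H^3$ followed by a diagonal argument --- are the same as in the paper and are fine.
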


In the proof we first relate the fully discrete energies $\mathcal E^{h_j}_{H_j}$
to semi-discrete energies where the numerical quadrature $\cG^H_{S}$ is replaced
by actual integration. The associated quadrature error is estimated with the help of the following technical lemma, whose proof we postpone to the end of the section.

\begin{lemma}[Quadrature estimate]
  \label{lem:overall_convergence_quad_error}
  For integrable $f:S\to\R$ set $E_S^H[f]\colonequals\cG^H_S[f]-\int_Sf\,\dd s$.
  Under the assumptions of Theorem~\ref{T:main} for every $\deform_H\in\cA_{H}$ we have
  \begin{multline*}
    \left|E_S^H\Big[
    \Big\langle\boldsymbol L^{h}(\mac)\nabla\nabla_{H}\deform_H, \nabla\nabla_{H}\deform_H+2n_{H}\otimes B^{h}(\mac)\Big\rangle\Big]\right|
    \\
    \leq C H\max_{T\in\mathcal T_{H}}\Big(\|\mathbb L^{h}\|_{W^{1,\infty}(T)}\big(1+\|B^{h}\|_{W^{1,\infty}(T)}\big)\Big)\Big(\|\nabla\nabla_{H}\deform_H\|^2_{L^2(S)}+|S|^\frac12\|\nabla\nabla_{H}\deform_H\|_{L^2(S)}\Big),
  \end{multline*}
  where the constant $C$ is independent of $h$ and $H$.
\end{lemma}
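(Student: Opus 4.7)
The plan is to split the integrand via bilinearity into
\begin{equation*}
I_1 \colonequals \langle \boldsymbol L^h\dhess \deform_H,\,\dhess \deform_H\rangle,
\qquad
I_2 \colonequals 2\langle \boldsymbol L^h\dhess \deform_H,\, n_H\otimes B^h\rangle,
\end{equation*}
to analyse the error $E^H_T[I_j]$ on each $T\in\macrogrid$ separately, and then to sum over~$T$.

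The key structural observation is that on every $T$ both $\dhess \deform_H|_T$ and $n_H|_T$
are polynomials of degree at most one: the former because $\dgrad \deform_H|_T\in\cP_2(T;\R^{3\times 2})$
by the definition of~$\Theta_H$, the latter because $\lagint$ is first-order Lagrange interpolation.
Consequently the polynomial parts of $I_1$ and $I_2$ are at most quadratic on~$T$, and would be
integrated \emph{exactly} by $\cG^H_T$ (Assumption~\ref{assumption:macroQuadrature}) if
$\mathbb L^h$ and $B^h$ were constant on~$T$. I would therefore freeze the coefficients at some
$z_T\in T$, writing $\mathbb L^h(s)=\mathbb L^h(z_T)+R_T^{\mathbb L}(s)$ and similarly for $B^h$,
with $\|R_T^{\mathbb L}\|_{L^\infty(T)}\leq H_T\|\nabla\mathbb L^h\|_{L^\infty(T)}$
by the local $W^{1,\infty}$-regularity; only the remainder contributes to the quadrature error.

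For $I_1$, the generic bound $|E^H_T[g]|\leq C|T|\,\|g\|_{L^\infty(T)}$ combined with the
inverse estimate $\|p\|_{L^\infty(T)}\leq C|T|^{-1/2}\|p\|_{L^2(T)}$ applied to the
affine polynomial $p = \dhess \deform_H|_T$ in \emph{both} of its occurrences gives
\begin{equation*}
|E^H_T[I_1]| \leq C H_T \|\nabla\mathbb L^h\|_{L^\infty(T)}\,\|\dhess \deform_H\|_{L^2(T)}^2,
\end{equation*}
which upon summation produces the $\|\dhess \deform_H\|_{L^2(S)}^2$ contribution.
For $I_2$ I would freeze the Lipschitz coefficient $M\colonequals\mathbb L^h B^h$, whose $W^{1,\infty}$-norm
is bounded by $\|\mathbb L^h\|_{W^{1,\infty}(T)}(1+\|B^h\|_{W^{1,\infty}(T)})$, and use the pointwise
bound $|n_H|\leq 1$. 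The latter is valid everywhere because $n_H(z)=\partial_1\deform_H(z)\times\partial_2\deform_H(z)$
has unit length at every node $z\in\cN_H$ by the discrete isometry constraint, and first-order
Lagrange interpolation preserves the convex hull of the nodal values. Now only one factor of the
polynomial piece contributes an inverse-estimate factor $|T|^{-1/2}$, so
\begin{equation*}
|E^H_T[I_2]| \leq C H_T |T|^{1/2} \|M\|_{W^{1,\infty}(T)}\,\|\dhess \deform_H\|_{L^2(T)},
\end{equation*}
and a Cauchy--Schwarz step $\sum_T |T|^{1/2}\|\dhess \deform_H\|_{L^2(T)}\leq |S|^{1/2}\|\dhess \deform_H\|_{L^2(S)}$
produces the mixed term $|S|^{1/2}\|\dhess\deform_H\|_{L^2(S)}$ in the claim.

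The only genuinely delicate point is the scaling bookkeeping: one must pair the factor $|T|$ from the
generic quadrature error with the inverse estimates in the correct multiplicity, once for each affine
factor of the integrand. The asymmetry between $I_1$ (two affine factors of $\dhess\deform_H$,
hence an overall $|T|^{-1}$) and $I_2$ (one affine factor together with the \emph{bounded} $n_H$,
hence only $|T|^{-1/2}$) is precisely what produces the two distinct contributions---quadratic and
linear in $\|\dhess \deform_H\|_{L^2(S)}$, respectively---that appear in the final estimate.
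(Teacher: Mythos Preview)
Your proposal is correct and follows essentially the same approach as the paper: split into $I_1$ and $I_2$, exploit that $\dhess\deform_H|_T$ and $n_H|_T$ are affine so that the frozen-coefficient part is in $\mathcal P_2(T)$ and hence integrated exactly, bound the remainder elementwise, then sum (with Cauchy--Schwarz for $I_2$). The only minor technical difference is in how the elementwise remainder is converted from $L^\infty$ to $L^2$: you use the generic bound $|E^H_T[g]|\le C|T|\|g\|_{L^\infty(T)}$ together with inverse estimates, whereas the paper packages the elementwise estimate as Lemma~\ref{L:quadp1} and instead uses Cauchy--Schwarz for the discrete measure together with the identity $\cG^H_T[(g^h)^2]=\|g^h\|_{L^2(T)}^2$ for $g^h\in\mathcal P_1(T)$ (a consequence of exactness on $\mathcal P_2$).
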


We then treat the semi-discrete energies using the following
abstract (lower semi-)continuity result for parametrized quadratic functionals
on finite-dimensional vector spaces.

\begin{lemma}\label{L:lowerbound_abstract}
  Let $\widetilde{\mathbb L}_{j},\widetilde{\mathbb L}\in L^\infty(S;\operatorname{Lin}(\R^N;\R^N))$
  and $F_{j},{F}\in L^2(S;\R^N)$ for all $j \in \mathbb N$, and consider the functionals
  \begin{alignat*}{2}
    \mathcal F_{j} & :L^2(S;\R^N)\to\R, & \qquad\mathcal F_{j}(G)& \colonequals\int_S\langle\widetilde{\mathbb L}_{j}G,G+{F}_{j}\rangle\dd s,
    \\
    \mathcal F & :L^2(S;\R^N)\to\R, & \mathcal F(G) & \colonequals\int_S\langle\widetilde{\mathbb L}G,G+{F}\rangle\dd s.
  \end{alignat*}
  Assume that
  \begin{equation*}
    \limsup_{j\to \infty}\big(\|\widetilde{\mathbb L}_{j}\|_{L^\infty(S)}+\|{F}_{j}\|_{L^2(S)}\big)<\infty
    \qquad \text{and} \qquad
    \langle\widetilde{\mathbb L}_{j}G,G\rangle\geq 0,
  \end{equation*}
  and
  \begin{equation*}
    \widetilde{\mathbb L}_{j} \to\widetilde{\mathbb L}\qquad\text{a.e.\ in $S$},
    \qquad \qquad
    {F}_{j} \to{F}\qquad\text{in }L^2(S).
  \end{equation*}
  Then $G_j \wto G$ weakly in $L^2(S;\R^N)$ implies $\liminf\limits_{j \to \infty}\mathcal F_{j}(G_{j})\geq\mathcal F(G)$.
  If even $G_{j}\to G$ strongly in $L^2(S;\R^N)$, then $\lim\limits_{j \to \infty}\mathcal F_{j}(G_{j})=\mathcal F(G)$.
\end{lemma}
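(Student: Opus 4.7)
The plan is to decompose the functional into a quadratic and a linear part, and handle each by combining (pointwise) dominated convergence for the coefficient sequences with weak-strong pairing for the varying $G_j$. Write
\[
\mathcal F_j(G)
= \mathcal Q_j(G) + \mathcal L_j(G),
\quad
\mathcal Q_j(G) \colonequals \int_S \langle \widetilde{\mathbb L}_j G, G\rangle \dd s,
\quad
\mathcal L_j(G) \colonequals \int_S \langle \widetilde{\mathbb L}_j G, F_j\rangle \dd s,
\]
and similarly $\mathcal F = \mathcal Q + \mathcal L$.

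For the linear part $\mathcal L_j(G_j)$, I would first show that $\widetilde{\mathbb L}_j^\top F_j \to \widetilde{\mathbb L}^\top F$ strongly in $L^2(S;\R^N)$. This follows from splitting $\widetilde{\mathbb L}_j^\top F_j - \widetilde{\mathbb L}^\top F = \widetilde{\mathbb L}_j^\top (F_j - F) + (\widetilde{\mathbb L}_j - \widetilde{\mathbb L})^\top F$; the first term goes to zero in $L^2$ by the uniform $L^\infty$ bound on $\widetilde{\mathbb L}_j$ and $F_j \to F$ in $L^2$, and the second goes to zero in $L^2$ by dominated convergence, using that the integrand vanishes a.e.\ and is bounded by the $L^2$ majorant $2 c_S |F|$. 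Combined with the weak convergence $G_j \wto G$ in $L^2$, weak-strong pairing gives $\mathcal L_j(G_j) \to \mathcal L(G)$.

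For the quadratic part I would use the identity
\[
\langle \widetilde{\mathbb L}_j G_j, G_j\rangle
= \langle \widetilde{\mathbb L}_j (G_j - G), G_j - G\rangle
+ \langle \widetilde{\mathbb L}_j (G_j - G), G\rangle
+ \langle \widetilde{\mathbb L}_j G, G_j - G\rangle
+ \langle \widetilde{\mathbb L}_j G, G\rangle .
\]
Integrating over $S$: the first summand is nonnegative by hypothesis and can be discarded to obtain the liminf inequality; the two cross terms vanish in the limit by the same weak-strong argument as above (now applied with $\widetilde{\mathbb L}_j^\top G \to \widetilde{\mathbb L}^\top G$ and $\widetilde{\mathbb L}_j G \to \widetilde{\mathbb L} G$ strongly in $L^2$, paired against $G_j - G \wto 0$); and the last term converges to $\int_S \langle \widetilde{\mathbb L} G, G\rangle \dd s$ by dominated convergence, with dominating function $c_S |G|^2$. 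Summing yields $\liminf_j \mathcal Q_j(G_j) \geq \mathcal Q(G)$, hence $\liminf_j \mathcal F_j(G_j) \geq \mathcal F(G)$. For the strong-convergence statement, the discarded term is bounded above by $c_S \|G_j - G\|_{L^2}^2 \to 0$, so the inequality becomes an equality.

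There is no serious obstacle: the argument is a standard weak-lower-semicontinuity trick for nonnegative quadratic functionals with variable coefficients. The only point requiring care is that $\widetilde{\mathbb L}_j$ converges only a.e.\ (not uniformly), which is precisely why dominated convergence, rather than a crude $L^\infty$ estimate, is needed to upgrade $\widetilde{\mathbb L}_j^\top H \to \widetilde{\mathbb L}^\top H$ to strong $L^2$ convergence for fixed $H \in L^2(S;\R^N)$.
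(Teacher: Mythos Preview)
Your proof is correct and follows essentially the same strategy as the paper: split off the linear term and handle it via strong $L^2$-convergence of $\widetilde{\mathbb L}_j^\top F_j$, then expand the quadratic part around $G$, drop the nonnegative remainder $\int_S\langle\widetilde{\mathbb L}_j(G_j-G),G_j-G\rangle\dd s$, and pass to the limit in the remaining terms by weak--strong pairing combined with dominated convergence for $\widetilde{\mathbb L}_j G\to\widetilde{\mathbb L}G$ in $L^2$. The only cosmetic difference is that the paper collapses your two cross terms into one via symmetry of $\langle\widetilde{\mathbb L}_j\cdot,\cdot\rangle$ (not stated in the lemma but satisfied in all applications), whereas your four-term expansion works without that assumption.
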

The proof of this lemma will also appear at the end of this section.
We proceed with the proof of Theorem~\ref{T:main}.

\begin{proof}[Proof of Theorem~\ref{T:main}] \mbox{} 
  \begin{enumerate}[wide,label=\textbf{Step~\arabic*}, labelindent=0pt]
    \item (Compactness)
    In this first step, we prove an energy estimate and then the
    compactness part~\ref{item:compactness} of the theorem.
    \begin{enumerate}[wide,label=\theenumi\textbf{.\arabic*}, labelindent=0pt]
      \item \label{proofstep:macroscopic_energy_estimate} (Energy estimate). 
      We first claim that for all $u_H\in\cA_H$ we have a constant $\widetilde C > 0$
      depending only on the constant $c_S$ of Assumption~\ref{ass:uniform_ellipticity_and_boundedness} (and in particular not on $H$) such that
      \begin{equation}
        \label{eq:macroscopic_energy_estimate}
        \mathcal E^{h}_H(u_H)
        \geq \frac{1}{\widetilde C}\|\sym\dhess u_H\|_{L^2(S)}^2-\widetilde C,
      \end{equation}
      where $\sym\dhess u_H$ denotes the third-order tensor with coefficients $(\sym\dhess u_H)_{ijk}\colonequals\frac12(\partial_j\partial_{H,k}u_{H,i}+\partial_k\partial_{H,j}u_{H,i})$.
      To show~\eqref{eq:macroscopic_energy_estimate}, recall the definition of
      \begin{equation*}
        \mathcal E^h_H(\deform_H)
        \colonequals
        \cG^{H}_S\Big[\Big\langle \boldsymbol{L}^{h}(\mac)\nabla\nabla_{H}\deform_H,\,\nabla\nabla_{H}\deform_H+2n_H \otimes B^{h}(\mac)\Big\rangle\Big],
      \end{equation*}
      and note that it can be split into two parts. To bound the first part,
      by the ellipticity of $\mathbb L^h$ and the exactness of the quadrature scheme
      on second-order polynomials (Assumption~\ref{assumption:macroQuadrature})
      we obtain the lower bound
      \begin{align*}
        \macroquad{\Big\langle\boldsymbol{L}^h(\mac)\dhess u_H,\,\dhess u_H\Big\rangle}
        &\geq
        \frac{1}{c_s}\macroquad{\Big\langle\sym\dhess u_H,\,\sym\dhess u_H\Big\rangle}
        \\
        & =
        \frac{1}{c_s}\norm{\sym\dhess u_H}_{L^2(\dom)}^2.
      \end{align*}
      For the second part, using the bounds on $\mathbb L^h$, $B^h$, and the pointwise bound $|n_h|\leq 1$ (which holds by construction~\eqref{eq:discreteNormalField}), we conclude that
      \begin{multline*}
        \left|\macroquad{\langle\boldsymbol{L}^h(\mac)\dhess u_H,\,n_H\otimes B^h\rangle}\right|
        \leq c_S\left(\macroquad{|\sym\dhess u_H|^2}\right)^\frac12
        \left(\macroquad{|n_H\otimes B^h|^2}\right)^\frac12\\
        \leq c_S\left(\macroquad{|\sym\dhess u_H|^2}\right)^\frac12
        \|n_H\|_{L^\infty(S)}\|B^h\|_{L^\infty(S)}
        \leq c_S^2\|\sym\dhess u_H\|_{L^2(S)},
      \end{multline*}
      where we have used the exactness of the quadrature scheme for the first term, and the bounds $\|n_H\|_{L^\infty(S)}\leq 1$ and $\|B^h\|_{L^\infty(S)}\leq c_S$ (cf.~\ref{ass:uniform_ellipticity_and_boundedness}).
      The claim~\eqref{eq:macroscopic_energy_estimate} then follows by combining both estimates.
      
      \item (Compactness).
      We then argue that the equiboundedness of the energy $\mathcal E^{h_j}_{H_j}(\deform_j)$, i.e.,~\eqref{eq:EquicoercivityBound},
      implies
      that the sequence $(\deform_j)$ is bounded in $H^2(S)$. Indeed, by \ref{proofstep:macroscopic_energy_estimate}
      we have $$\limsup_{j\to\infty}\|\sym\nabla\nabla_{H_j}\deform_j\|_{L^2(S)}^2<\infty.$$
      Using that $\nabla_{H_j}\deform_j$ has prescribed boundary values on the nodes $\mathcal N_H\cap\Gamma_D$, we conclude
      with the help of Korn's inequality that
      \begin{equation}\label{eq:equib1}
        \limsup_{j\to\infty}\big(\|\nabla_{H_j}\deform_j\|_{L^2(S)}+\|\nabla\nabla_{H_j}\deform_j\|_{L^2(S)}\big)<\infty. 
      \end{equation}
      Furthermore, the approximation property~\eqref{eq:discGradProp4} of the
      discrete gradient $\nabla_{H_j}$ implies
      \begin{equation}\label{eq:discr}
        \|\nabla_{H_j}\deform_j-\nabla\deform_j\|_{L^2(S)}\leq CH_j\|\nabla\nabla_{H_j}\deform_j\|_{L^2(S)}.
      \end{equation}
      Together with \eqref{eq:equib1} and the triangle inequality we deduce that
      the sequence $(\nabla\deform_j)$ is bounded in $L^2(S)$. Since the values
      of the $\deform_j$ are prescribed on the nodes $\mathcal N_H\cap\Gamma_D$ as well,
      Poincar\'e's inequality applies, and we conclude that
      \begin{equation}\label{eq:equib2}
        \limsup_{j\to\infty}\|\deform_j\|_{H^1(S)}<\infty. 
      \end{equation}
      Note that~\eqref{eq:equib1} and \eqref{eq:equib2} allow us to extract a subsequence (not relabeled) such that
      \begin{align*}
        \deform_j\wto\deform\text{ and }
        \nabla_{H_j}\deform_j\wto \theta \text{ weakly in } H^1(S),
      \end{align*}
      with $\deform\in H^1(S;\R^3)$ and $\theta \in H^1(\dom;\R^{3\times2})$ satisfying $\deform=\widetilde u_D$ and $\theta = \nabla\widetilde u_D$ on $\Gamma_D$
      (thanks to the continuity of the trace operator.)
      Furthermore, \eqref{eq:discr} combined with \eqref{eq:equib1} implies that $\theta=\nabla\deform$.
      Since the embedding $H^1(S)\hookrightarrow L^2(S)$ is compact, we find that $\nabla_{H_j}\deform_j\to\nabla\deform$ strongly in $L^2(S)$.
      Now, \eqref{eq:discr} implies that $v_j\to v$ strongly in $H^1(S)$, which proves \eqref{eq:comp:conv1}.
      
      Following the argument of \cite{bartels2022stable}
      we then deduce that $\deform\in H^2_{\rm iso}(S;\R^3)$ and thus $\deform\in\cA$.
      To see this, let $\cI_{H_j}^1$ denote the standard first-order Lagrange interpolation
      operator on $\mathcal T_{H_j}$. Then, using that $\deform_j\in\cA_{H_j}$ is a
      nodewise isometry and an interpolation estimate, we have
      \begin{align*}
        \norm[\big]{(\nabla_{H_j} \deform_j)^\top\nabla_{H_j} \deform_j - I_{2\times2}}_{L^1(\dom)}
        &\leq
        \norm[\big]{(\nabla_{H_j} \deform_j)^\top\nabla_{H_j} \deform_j - I_{H_j}^1[(\nabla_{H_j} \deform_j)^\top\nabla_{H_j}\deform_j]}_{L^1(\dom)} \\
        &\quad +  \norm[\big]{I_{H_j}^1[(\nabla_{H_j} \deform_j)^\top\nabla_{H_j} \deform_j] - I_{2\times2}}_{L^1(\dom)} \\
        &\leq
        C H \norm[\big]{\nabla\big((\nabla_{H_j}\deform_j)^\top\nabla_{H_j} \deform_j \big)}_{L^1(\dom)}\\
        &\leq
        C H\norm[\big]{\nabla \nabla_{H_j} \deform_j}_{L^2(\dom)} \norm{\nabla_{H_j} \deform_j}_{L^2(\dom)},
      \end{align*}
      where the last estimate follows from the product rule and the Cauchy--Schwarz inequality.
      Thanks to \eqref{eq:equib1} the right-hand side converges to zero as $j\to\infty$. Since $(\nabla_{H_j} \deform_j)^\top\nabla_{H_j} \deform_j\to \nabla\deform^\top\nabla\deform$ strongly in $L^1(\dom;\R^{2\times 2})$, we deduce that $\nabla\deform^\top\nabla\deform=I_{2\times 2}$ a.e.\ in $S$.
      
      It remains to prove \eqref{eq:comp:conv2}. 
      Since $\nabla\nabla_{H_j}\deform_j$ is a bounded sequence in $L^2(S)$ and because
      $|n_{H_j}|\leq 1$ in~$S$, the sequence $n_{H_j}\cdot\nabla\nabla_{H_j}\deform_j$ is bounded in $L^2(S)$ as well. It thus converges weakly in $L^2(S)$ to some limit $A\in L^2(S)$ up to a subsequence. Since $n_{H_j}\to n_{\deform}$ a.e.~in $S$ (up to a subsequence) and $\nabla\nabla_{H_j}\deform_j\wto\nabla^2\deform$ weakly in $L^2(S)$, we conclude that $A=n_{\deform}\cdot \nabla^2\deform=-\II_{\deform}$. Since this uniquely determines $A$, the entire sequence converges weakly in $L^2(S)$.
      
      At this point we have shown assertion~\ref{item:compactness}.
    \end{enumerate}
    
    \item (Lower bound)
    
    We next prove part~\ref{item:lower_bound} of Theorem~\ref{T:main}, i.e.,
    the assertion that for all sequences $\deform_j\to \deform$ in $L^2(S;\R^3)$ we have
    \begin{equation*}
      \liminf_{j\to \infty} \mathcal E^{h_j}_{H_j}(\deform_j) \geq \mathcal E^0(\deform).
    \end{equation*}
    For this, it suffices to consider the case where the left-hand side is bounded, since otherwise the inequality is trivial.
    As the energy is bounded from below by \ref{proofstep:macroscopic_energy_estimate},
    we may pass to a subsequence (not relabeled)
    such that $\lim_{j\to\infty}\mathcal E^{h_j}_{H_j}(\deform_j) = \liminf_{j\to\infty}\mathcal E^{h_j}_{H_j}(\deform_j)$.
    By using the quadrature error bounds of Lemma~\ref{lem:overall_convergence_quad_error} we deduce that $\deform\in\cA$,
    and that $(\deform_j)$ converges to $\deform$ in the sense of \eqref{eq:comp:conv1}
    and~\eqref{eq:comp:conv2}.
    In particular, we conclude that as $j\to \infty$
    \begin{alignat*}{2}
      \nabla\nabla_{H_j}\deform_j & \wto \nabla^2\deform & \qquad\text{weakly in $L^2(S)$},
      \\
      n_{H_j}\otimes B^{h_j} & \to n_{\deform}\otimes B^0 & \text{strongly in $L^2(S)$}.
    \end{alignat*}
    An application of Lemma~\ref{L:lowerbound_abstract} with $G_j\colonequals\nabla\nabla_{H_j}\deform_j$
    and $F_j\colonequals n_{H_j}\otimes B^{h_j}$ thus yields
    \begin{multline*}
      \liminf_{j\to \infty}\int_S \Big\langle\boldsymbol{L}^{h_j}(\mac)\nabla\nabla_{H_j}\deform_j, \nabla\nabla_{H_j}\deform_j+2n_{H_j}\otimes B^{h_j}(\mac)\Big\rangle \dd\mac
      \\
      \geq
      \int_S \Big\langle\boldsymbol{L}^0(\mac) \nabla^2\deform, \nabla^2\deform+2n_{\deform}\otimes B^0(\mac)\Big\rangle \dd\mac
      =
      \mathcal E^0(\deform).
    \end{multline*}
    Combined with Lemma~\ref{lem:overall_convergence_quad_error} again,
    the asserted lower bound follows.
    
    \item
    (Recovery sequence)
    
    In this final step we prove assertion~\ref{item:recovery_sequence} of Theorem~\ref{T:main},
    i.e., we construct a recovery sequence.
    In the first substep, we only do this for the case
    that $\deform\in\mathcal A\cap H^3(S)$. To that end we construct
    the sequence elements $\deform_{j}$ by a suitable interpolation and prove
    strong convergence properties that imply the convergence of the energy functional
    along the sequence. In the second substep we construct a recovery sequence
    for general $\deform\in\mathcal A$ by appealing to a diagonal-sequence argument.
    
    \begin{enumerate}[wide,label=\theenumi\textbf{.\arabic*}, labelindent=0pt]
      \item (Recovery sequence for $\deform\in\mathcal A\cap H^3(S)$).\label{proofstep:recov_reg}
      Let $\deform\in\mathcal A\cap H^3(S)$. We first claim that there exists a sequence $\deform_{j}\in\cA_{H_j}$ such that
      \begin{alignat*}{2}
        \deform_{j} & \to \deform \qquad & \text{strongly in }H^1(S)\\
        \nabla\nabla_{H_j}\deform_{j} & \to \nabla^2 \deform & \text{strongly in }L^2(S).
      \end{alignat*}
      The construction of such a $(\deform_j)$ follows the arguments in~\cite{bartels2013approximation,bartels2022stable,rumpf2023two,bartels2017bilayer},
      which we recall here for the readers' convenience.
      Thanks to the $H^3$-regularity of $\deform$ the interpolation operator $\cI_H^{\mathcal W}$
      is well-defined, and we can set
      $\deform_j \colonequals \cI_{H_j}^{\mathcal W} \deform \in \cW_H \subset H^1(\dom;\R^3)$.
      
      By definition of the interpolation operator $\cI_H^{\mathcal W}$ we infer that $\deform_j$ satisfies the isometry constraint at the nodes and the boundary conditions, which implies $\deform_j \in \cA_{H_j}$.
      The local interpolation estimate~\eqref{eq:DKTinterpolationEstimate} then implies
      \begin{align*}
        \norm{\deform_j - \deform}_{H^1(\dom;\R^3)} \leq C H^2_j \norm{\deform}_{H^3(\dom;\R^3)},
      \end{align*}
      and thus $\deform_j\to\deform$ strongly in $H^1(\dom)$. Furthermore, \eqref{eq:DKTinterpolationEstimate} also implies
      \begin{equation*}
        \|\nabla\nabla_{H_j}\deform_j-\nabla ^2\deform\|_{L^2(S)}\leq CH_j\|\nabla^3\deform\|_{L^2(S)},
      \end{equation*}
      and thus $\nabla\nabla_{H_j}\deform_j\to \nabla^2\deform$ strongly in $L^2(S)$.
      
      It remains to show that $\mathcal E^{h_j}_{H_j}(\deform_j)\to \mathcal E^0(\deform)$.
      In view of Lemma~\ref{lem:overall_convergence_quad_error} it suffices to show that
      \begin{align*}
        &\lim_{j\to \infty}\int_S \Big\langle\boldsymbol{L}^{h_j}(\mac)\nabla\nabla_{H_j}\deform_j, \nabla\nabla_{H_j}\deform_j+2n_{H_j}\otimes B^{h_j}(\mac)\Big\rangle\dd\mac\\
        &\qquad\qquad=
        \int_S \Big\langle\boldsymbol{L}^0(\mac) \nabla^2\deform, \nabla^2\deform+2n_{\deform}\otimes B^0(\mac)\Big\rangle\dd\mac
        =
        \mathcal E^0(\deform),
      \end{align*}
      but this follows again by Lemma~\ref{L:lowerbound_abstract}.
      
      \item (Recovery sequence for general $\deform$)
      
      The construction of the recovery sequence for general $\deform\in\cA$ relies on a standard diagonal-sequence argument: For $k\in\N$ choose $\deform_{k}\in\mathcal A\cap H^3(S)$ with $\|\deform_k-\deform\|_{H^2(S)}<\frac1k$. Now, for each $k$ let $\deform^{k}_{j}\in\cA_{H_j}$ be the recovery sequence of \ref{proofstep:recov_reg}. Define
      \begin{equation*}
        c^{k}_{j}
        \colonequals
        \|\deform^{k}_{j}-\deform\|_{H^1(S)}+\|\nabla\nabla_{H_j}\deform_j-\nabla^2\deform\|_{L^2(S)}+|\mathcal E^{h_j}_{H_j}(\deform_j^k)-\mathcal E^0(\deform)|.
      \end{equation*}
      By \ref{proofstep:recov_reg} we have for all $k\in\N$,
      \begin{equation*}
        \lim_{j\to\infty}c^{k}_{j}
        =
        \|\deform_{k}-\deform\|_{H^1(S)}+\|\nabla^2\deform_{k}-\nabla^2\deform\|_{L^2(S)}+|\mathcal E^0(\deform^k)-\mathcal E^0(\deform)|.
      \end{equation*}
      Lemma~\ref{L:lowerbound_abstract} implies that $\deform\mapsto \mathcal E^0(\deform)$ is continuous on $\cA$ with respect to strong convergence in $H^2(S)$. Thus,
      \begin{equation*}
        \lim_{k\to\infty}\lim_{j\to\infty}c^{k}_{j}=0.
      \end{equation*}
      By appealing to Attouch's diagonalization lemma \cite[Lemma~1.15 \& Corollary~1.16]{attouch1984variational}, there exists a diagonal sequence $(k_j)_{j\in\N}$ increasing to $+\infty$ such that $\lim_{j\to\infty}c^{k_j}_{j}=0$. We conclude that $\deform_j\colonequals\deform^{k_j}_{j}$ is a suitable recovery sequence.
      \qedhere
    \end{enumerate}
  \end{enumerate}
\end{proof}

The above result deals with the case when $(h_j,H_j)\to 0$ simultaneously.
Convergence also holds when we pass to the limits in $h$ and $H$ consecutively.
To make this claim more precise, we define the two semi-discrete energy functionals
\begin{alignat*}{2}
  \mathcal E_H & :\mathcal A_H\to\R,& \qquad \mathcal E_H(\deform_H)& \colonequals \int_S\Big\langle\boldsymbol{L}^0 \nabla\nabla_H\deform_H,\,\nabla\nabla_H\deform_H+2n_H\otimes B^0\Big\rangle\dd \mac
  \\
  \mathcal E^{h} & :\mathcal A\to\R, & \mathcal E^{h}(\deform)& \colonequals 
  \int_S \Big\langle\boldsymbol{L}^h \nabla^2\deform,\,\nabla^2 \deform+2n_{\deform}\otimes B^h\Big\rangle\dd \mac,
\end{alignat*}
and we extend these functionals to $L^2(S;\R^3)$ by setting them to $+\infty$ outside of $\cA_H$ and $\cA$, respectively.
Then we get the following statement about the individual limits and their commutativity:

\begin{theorem}[Individual limits]
  The following diagram is well-defined and commutes in the sense of $\Gamma$-convergence in the topology of $L^2(S;\R^3)$:
  \begin{center}
    \begin{tikzcd}[row sep=large, column sep=huge]
      \mathcal E^{h}_H
      \arrow[r, "H \to 0"]
      \arrow[d, "h \to 0" left]
      \arrow[dr, "{(h,H)\to 0}" description]      
      & \mathcal E^{h}
      \arrow[d, "h \to 0"] \\
      \mathcal E_H
      \arrow[r, "H \to 0" below]
      & \mathcal E^0
    \end{tikzcd}
  \end{center}
\end{theorem}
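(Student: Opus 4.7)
The plan is to verify the five $\Gamma$-convergences corresponding to the edges and the diagonal of the diagram, noting that once they are all established with the same $\Gamma$-limit $\mathcal E^0$, the commutativity statement follows from the uniqueness of $\Gamma$-limits. The diagonal $\mathcal E^h_H\xrightarrow{\Gamma}\mathcal E^0$ as $(h,H)\to 0$ is exactly Theorem~\ref{T:main} applied to $\mathbb L^h\colonequals\mathbb L^{\gamma,h}_{\hom}$, $B^h\colonequals B^{\gamma,h}_{\rm eff}$, whose hypotheses (H1)--(H3) are furnished by the uniform boundedness and ellipticity bound from Section~\ref{sec:MicroNumerics}, the Differentiability-in-$\mac$ lemma (for (H2)), and Proposition~\ref{prop:approxcor} (for (H3)).

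For the two horizontal limits ($H\to 0$) I would reuse Theorem~\ref{T:main} with constant-in-$h$ coefficient sequences. The top horizontal limit $\mathcal E^h_H\xrightarrow{\Gamma}\mathcal E^h$ at fixed $h$ is obtained by setting the sequence in Theorem~\ref{T:main} to be the stationary one $(\mathbb L^{\gamma,h}_{\hom},B^{\gamma,h}_{\rm eff})$; (H1) and (H3) hold trivially, while (H2) is guaranteed by the Differentiability-in-$\mac$ lemma from Section~\ref{sec:MicroNumerics}. The bottom horizontal limit $\mathcal E_H\xrightarrow{\Gamma}\mathcal E^0$ is exactly Theorem~\ref{T:main} applied to the stationary sequence $(\mathbb L^0,B^0)=(\mathbb L^\gamma_{\hom},B^\gamma_{\rm eff})$.

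The two vertical limits ($h\to 0$) require separate arguments. For the bottom vertical $\mathcal E^h_H\xrightarrow{\Gamma}\mathcal E_H$ at fixed $H$, I would exploit the fact that $\mathcal A_H$ is a closed subset of the finite-dimensional space $\mathcal W_H$, on which all norms are equivalent. Since $\cG^H_S$ is a finite sum of point evaluations and Proposition~\ref{prop:approxcor} provides pointwise convergence of $\mathbb L^{\gamma,h}_{\hom}(\mac)$ and $B^{\gamma,h}_{\rm eff}(\mac)$ at every $\mac\in S$, we obtain pointwise convergence $\mathcal E^h_H(\deform_H)\to\mathcal E_H(\deform_H)$ on $\mathcal A_H$. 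Combined with the coercivity of $\mathcal E^h_H$ from step~\ref{proofstep:macroscopic_energy_estimate} of the proof of Theorem~\ref{T:main} (uniform in $h$) and the closedness of $\mathcal A_H$, this upgrades to continuous convergence on $\mathcal W_H$, which in a finite-dimensional setting implies $\Gamma$-convergence.

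For the top vertical $\mathcal E^h\xrightarrow{\Gamma}\mathcal E^0$ the admissible set $\mathcal A$ is independent of $h$, so only the coefficients change. For the $\liminf$ inequality, given $\deform_j\to\deform$ in $L^2(S;\R^3)$ with bounded energies, coercivity yields a subsequence with $\deform_j\wto\deform$ in $H^2$ and $\deform\in\mathcal A$; then $\nabla^2\deform_j\wto\nabla^2\deform$ in $L^2$, while Sobolev embedding gives strong $L^2$-convergence of $n_{\deform_j}\otimes B^{\gamma,h_j}_{\rm eff}\to n_\deform\otimes B^\gamma_{\rm eff}$, and Lemma~\ref{L:lowerbound_abstract} applied with $G_j\colonequals\nabla^2\deform_j$ and $F_j\colonequals 2n_{\deform_j}\otimes B^{\gamma,h_j}_{\rm eff}$ closes the inequality. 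A recovery sequence is simply the constant sequence $\deform_j\colonequals\deform$, for which the strong-convergence assertion of Lemma~\ref{L:lowerbound_abstract} gives $\mathcal E^{h_j}(\deform)\to\mathcal E^0(\deform)$. The main obstacle will be the strong $L^2$-convergence of the normal field $n_{\deform_j}$, which needs the compact embedding $H^2(S)\hookrightarrow W^{1,p}(S)$ for $p<\infty$ together with the quadratic dependence of $n_\deform$ on $\nabla\deform$; all other ingredients reduce to standard applications of dominated convergence and of Theorem~\ref{T:main} and Proposition~\ref{prop:approxcor}.
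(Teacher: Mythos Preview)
Your overall plan matches the paper's: the diagonal is Theorem~\ref{T:main}, the two horizontal arrows are special cases of Theorem~\ref{T:main} with stationary coefficient sequences, and the two vertical arrows need separate arguments. Your treatment of the right vertical $\mathcal E^h\to\mathcal E^0$ is correct and essentially reproduces what the paper outsources to \cite[Lemma~6.7]{boehnlein2023homogenized}; the constant recovery sequence together with Lemma~\ref{L:lowerbound_abstract} is exactly the right mechanism.

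There is, however, a genuine gap in your treatment of the left vertical $\mathcal E^h_H\to\mathcal E_H$ at fixed $H$. You argue that pointwise convergence of $\mathbb L^{\gamma,h}_{\hom}$ and $B^{\gamma,h}_{\rm eff}$ at the quadrature nodes yields pointwise convergence $\mathcal E^h_H(\deform_H)\to\mathcal E_H(\deform_H)$. But $\mathcal E^h_H$ is defined via the quadrature operator $\cG^H_S$, whereas the paper defines $\mathcal E_H$ as a genuine integral $\int_S$. The pointwise limit of $\mathcal E^h_H(\deform_H)$ as $h\to 0$ is therefore
\[
\cG^H_S\Big[\big\langle\boldsymbol L^0(\mac)\nabla\nabla_H\deform_H,\,\nabla\nabla_H\deform_H+2n_H\otimes B^0(\mac)\big\rangle\Big],
\]
which differs from $\mathcal E_H(\deform_H)$ by a quadrature error of order $H$ (cf.\ Lemma~\ref{lem:overall_convergence_quad_error}) that does \emph{not} vanish when $H$ is held fixed and the coefficients are not piecewise constant on $\macrogrid$. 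Your continuous-convergence argument on the finite-dimensional set $\cA_H$ thus identifies the $\Gamma$-limit as the quadrature-based functional, not the integral-based $\mathcal E_H$. The paper's own one-line proof (``the argument there can be reused literally'') is equally silent on this quadrature--integral discrepancy; the clean fix is to take $\mathcal E_H$ as defined with $\cG^H_S$ rather than $\int_S$, after which your finite-dimensional argument goes through verbatim and the bottom horizontal remains a special case of Theorem~\ref{T:main}. (A minor terminological point: you label the vertical arrows ``top'' and ``bottom'' where ``right'' and ``left'' are meant.)
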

\begin{proof}
  The diagonal arrow is precisely Theorem~\ref{T:main}.
  The two convergence results for $H\to 0$ are special cases of Theorem~\ref{T:main}.
  The convergence $\mathcal E^h \to  \mathcal E^{0}$
  along the right vertical arrow is shown in \cite[Lemma~6.7]{boehnlein2023homogenized}.
  The argument there can be reused literally to also prove the left vertical arrow,
  i.e., $\mathcal E^{h}_H \to  \mathcal E_H$.
  Commutativity follows because the objects at the four corners are all defined by
  explicit closed-form expressions, and not just as limits of sequences.
\end{proof}

We conclude this section by presenting the proofs of Lemma~\ref{lem:overall_convergence_quad_error} and Lemma~\ref{L:lowerbound_abstract}.
\begin{proof}[Proof of Lemma~\ref{lem:overall_convergence_quad_error}]
  Recall the shorthand notations
  $E_S^H[f]\colonequals\cG^H_S[f]-\int_Sf\,\dd s$ and
  $E_T^H[f]\colonequals\cG^H_T[f]-\int_Tf\,\dd s$ for the global and local quadrature errors, respectively.
  The claim follows from the following two quadrature estimates%
  \begin{align}
    \label{eq:quadestimate555}
    \left|E^{H}_S\Big[\Big\langle\boldsymbol{L}^{h}(\mac)\nabla\nabla_{H}\deform_H,\nabla\nabla_{H}\deform_H\Big\rangle\Big]\right|
    & \leq
    CH\max_{T\in\macrogrid}\|\mathbb L^{h}\|_{W^{1,\infty}(T)}\|\nabla\nabla_{H}\deform_H\|^2_{L^2(S)},
    \\
    \label{eq:quadestimate555b}
    \left|E^{H}_S\Big[\Big\langle\boldsymbol{L}^{h}(\mac)\nabla\nabla_{H}\deform_H,n_{H}\otimes B^{h}(\mac)\Big\rangle\Big]\right|
    & \leq CH|S|^\frac12\max_{T\in\macrogrid}(\|\mathbb L^{h}\|_{W^{1,\infty}(T)}\|B^h\|_{W^{1,\infty}(T)})\|\nabla\nabla_{H}\deform_H\|_{L^2(S)},
  \end{align}
  which we prove now. 
  We start with \eqref{eq:quadestimate555}.
  On any element $T$ the discrete gradient $\nabla\nabla_{H}\deform_H$
  is a first-order polynomial, and thus the quadrature error estimate \eqref{eq:quadestmac0} from the appendix yields%
  \begin{align*}
    \left|E^{H}_T\Big[\Big\langle\boldsymbol{L}^{h}(\mac)\nabla\nabla_{H}\deform_H,\nabla\nabla_{H}\deform_H\Big\rangle\Big]\right|
    \leq C H_{T}\|\mathbb{L}^{h}\|_{W^{1,\infty}(T)} \|\nabla\nabla_{H}\deform_H\|_{L^2(T)}^2.
  \end{align*}
  Summation over $T\in\macrogrid$ yields \eqref{eq:quadestimate555}.
  To prove the second quadrature estimate \eqref{eq:quadestimate555b}, note that by definition of $\boldsymbol L^{h}$
  \begin{equation*}
    \Big\langle\boldsymbol L^{h}\nabla\nabla_{H}\deform_H,\,n_{H}\otimes B^{h}\Big\rangle
    =
    \Big\langle\mathbb L^{h}\big(n_{H}\cdot \nabla\nabla_{H}\deform_H\big),\,B^{h}\Big\rangle
    =
    \Big\langle\big(n_{H}\cdot \nabla\nabla_{H}\deform_H\big),\,(\mathbb L^{h})^\top B^{h}\Big\rangle,
  \end{equation*}
  pointwise on $S$. It is sufficient to estimate the quadrature error of the right-hand side.
  Since $n_{H}$ and $\nabla\nabla_{H}\deform_H$ are first-order polynomials, we may apply the quadrature estimate \eqref{eq:quadestmac0} with $f,g,\hat g$ denoting components of $(\mathbb L^{h})^\top B^{h}$, $n_{H}$, and $\nabla\nabla_{H}\deform_H$, respectively. We obtain
  \begin{equation*}
    \left|E^{H}_T\Big[\Big\langle\big(n_{H}\cdot \nabla\nabla_{H}\deform_H\big),\,(\mathbb L^{h})^\top B^{h}\Big\rangle\Big]\right|
    \leq C H_{T}\|\mathbb L^{h}\|_{W^{1,\infty}(T)}\|B^{h}\|_{W^{1,\infty}(T)}    
    \|n_{H}\|_{L^2(T)}\|\nabla\nabla_{H}\deform_H\|_{L^2(T)}.
  \end{equation*}
  A summation over $T\in\macrogrid$ and the Cauchy--Schwarz inequality yields
  \begin{align*}
    \left|E^{H}_T\Big[\Big\langle\big(n_{H}\cdot \nabla\nabla_{H}\deform_H\big),\,(\mathbb L^{h})^\top B^{h}\Big\rangle\Big]\right|
    &\leq C H\left(\max_{T\in\macrogrid}\|\mathbb L^{h}\|_{W^{1,\infty}(T)}\|B^{h}\|_{W^{1,\infty}(T)}\right)\\
    &\qquad\qquad    
    \|n_{H}\|_{L^2(S)}\|\nabla\nabla_{H}\deform_H\|_{L^2(S)}.
  \end{align*}
  Since $|n_H|\leq 1$ pointwise in $S$, \eqref{eq:quadestimate555b} follows.
\end{proof}

\begin{proof}[Proof of Lemma~\ref{L:lowerbound_abstract}]
  We only prove the statement for the case of weak convergence $G_{j}\wto G$,
  since the strong convergence is trivial. We first note that
  \begin{align*}
    \int_S\langle\widetilde{\mathbb L}_{j}G_{j},{F}_{j}\rangle \dd \mac
    \to
    \int_S\langle\widetilde{\mathbb L}G,{F}\rangle \dd s,
  \end{align*}
  since $\big(\widetilde{\mathbb L}_{j}\big)^\top F_{j}\to \big(\widetilde{\mathbb L}\big)^\top F$ strongly in $L^2(S)$.
  Hence, it suffices to consider the quadratic term. By quadratic expansion and symmetry of $\langle\mathbb L\cdot,\cdot\rangle$ we have
  \begin{align*}
    \int_S\langle\widetilde{\mathbb L}_{j}G_{j},G_{j}\rangle \dd s
    & = 2\int_S\langle\widetilde{\mathbb L}_{j}G,G_{j}\rangle \dd s-
    \int_S\langle\widetilde{\mathbb L}_{j}G,G\rangle \dd s
    +\int_S\langle\widetilde{\mathbb L}_{j}(G_{j}-G),(G_{j}-G)\rangle \dd s\\
    & \geq 2\int_S\langle\widetilde{\mathbb L}_{j}G,G_{j}\rangle \dd s-
    \int_S\langle\widetilde{\mathbb L}_{j}G,G\rangle \dd s.
  \end{align*}
  On the right-hand side we can pass to the limit, since this is product of a weakly and a strongly convergent factor. We get
  \begin{align*}
    \liminf_{h\to 0}\int_S\langle\widetilde{\mathbb L}_{j}G_{j},G_{j}\rangle \dd \mac
    \geq
    2\int_S\langle\widetilde{\mathbb L}G,G\rangle \dd s- \int_S\langle\widetilde{\mathbb L}G,G\rangle \dd \mac
    =
    \int_S\langle\widetilde{\mathbb L}G,G\rangle \dd s,
  \end{align*}
  which is the assertion.
\end{proof}

\subsection{Convergence of the fully discrete energy with the microstructure discretization of Chapter~\ref{sec:MicroNumerics}}

In this section we apply the theory of the previous section to the effective model $\dtenergy$
with a microstructure discretization as in Section~\ref{sec:MicroNumerics}.
In the following we consider microscopic and macroscopic triangulations $\microgrid$ and $\macrogrid$ as introduced in Section~\ref{sec:MicroNumerics} and Section~\ref{sec:MacroDiscretization}, respectively. In particular, we assume that the associated quadrature operators satisfy the properties claimed in Section~\ref{sec:discreteCorr} and Section~\ref{sec:discreteDKT}.

With regard to the clamped boundary condition on $\Gamma_D$, we assume the conditions of Assumption~\ref{Assumption:BoundaryData}. We note that part~\ref{item:bd_approximability} of Assumption~\ref{Assumption:BoundaryData} could be replaced by the stronger, explicit conditions of Assumption~\ref{ass:domain}.
The elasticity tensor $\mathbb L$ and the prestrain $B$ of the three-dimensional model
are supposed to satisfy the following regularity conditions:
\begin{assumption} \mbox{} 
  \begin{enumerate}[(a)]
    \item (Uniform ellipticity). $\mathbb L$ is uniformly elliptic on $S\times\Box$ in the sense of \eqref{eq:Lbounds}.
    \item \label{item:microscopic_regularity}
    (Microscopic regularity). For all $s\in S$ there holds
    \begin{equation*}
      \lim_{h\to 0}h\max_{K\in\microgrid}\Big(\|\nabla_y\mathbb L(\mac,\cdot)\|_{L^\infty(K)}+\|\nabla_y B(\mac,\cdot)\|_{L^{\infty}(K)})\Big)=0.
    \end{equation*}
    \item \label{item:macroscopic_regularity}
    (Macroscopic regularity). There exists a constant $C$ independent of $h,H$ such that
    \begin{equation*}
      \max_{T\in\macrogrid}\max_{K\in\microgrid}\Big\||\nabla_s\mathbb L|+|B|+|\nabla_s B|\Big\|_{L^\infty(T\times K)}\leq C.
    \end{equation*}
  \end{enumerate}
\end{assumption}

We note that \ref{item:microscopic_regularity} allows for discontinuous composites, as long as the discontinuities are aligned with the microscopic mesh $\microgrid$.
With regard to \ref{item:macroscopic_regularity} we note that the condition not only
allows to treat graded composites where the microstructure may change in
dependence of $\mac$ in regular way, but also the case when the dependence in $\mac$
is discontinuous, as long as the jump set is resolved by the macroscopic mesh $\macrogrid$.

\begin{corollary}[$\Gamma$-convergence of $\dtenergy$]
  In the situation described above, $\dtenergy$ $\Gamma$-converges in the topology of $L^2(S;\R^3)$ to $\widetilde{\mathcal E}^\gamma$ as $(h,H)\to 0$. In particular, any sequence of mimimizers $\deform_H^h\in\operatorname{arg\,min}\dtenergy$ converges strongly in $H^1(S;\R^3)$, up to a subsequence, to a minimizer of $\widetilde{\mathcal E}^\gamma$.
\end{corollary}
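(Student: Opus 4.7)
The plan is to apply Theorem~\ref{T:main} with the explicit identifications $\mathbb L^h(\mac) \colonequals \mathbb L^{\gamma,h}_{\hom}(\mac)$ and $B^h(\mac) \colonequals B^{\gamma,h}_{\rm eff}(\mac)$ for $h>0$, and $\mathbb L^0 \colonequals \mathbb L^{\gamma}_{\hom}$, $B^0 \colonequals B^\gamma_{\rm eff}$. With these choices the functional $\mathcal E^h_H$ of Theorem~\ref{T:main} coincides with $\dtenergy$ on $\cA_H$, and the limit $\mathcal E^0$ coincides with the reformulated continuum energy $\widetilde{\mathcal E}^\gamma$. The whole task reduces to verifying that the hypotheses (H1)–(H3) of Theorem~\ref{T:main} are satisfied under the standing regularity assumptions.

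First I would check (H1): the Boundedness and Ellipticity Lemma of Section~\ref{sec:MicroNumerics} yields uniform (in $h$ and $\mac$) ellipticity and boundedness of $\mathbb L^{\gamma,h}_{\hom}$, as well as the bound $|B^{\gamma,h}_{\rm eff}(\mac)|\leq C\max_{y\in R_h(\Box)}|B(\mac,y)|$, which is uniformly bounded by the macroscopic regularity assumption~\ref{item:macroscopic_regularity}. Next I would verify (H2): the Differentiability Lemma bounds $\|\nabla_s\mathbb L^{\gamma,h}_{\hom}\|_{L^\infty(T)}$ and $\|\nabla_s B^{\gamma,h}_{\rm eff}\|_{L^\infty(T)}$ by sums of norms of $\nabla_s\mathbb L(\cdot,r)$ and $\nabla_s B(\cdot,r)$ at the microscopic quadrature points $r$. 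Assumption~\ref{item:macroscopic_regularity} supplies a uniform (in $h$ and in $T\in\macrogrid$) bound on the right-hand side, whence $\max_{T\in\macrogrid}\|\mathbb L^{\gamma,h}_{\hom}\|_{W^{1,\infty}(T)}(1+\|B^{\gamma,h}_{\rm eff}\|_{W^{1,\infty}(T)})$ is uniformly bounded in $h$, and multiplication by $H$ produces the required vanishing as $H\to 0$, uniformly in $h$.

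For (H3), Proposition~\ref{prop:approxcor} gives pointwise convergence $\mathbb L^{\gamma,h}_{\hom}(\mac)\to \mathbb L^\gamma_{\hom}(\mac)$ and $B^{\gamma,h}_{\rm eff}(\mac)\to B^\gamma_{\rm eff}(\mac)$ for every $\mac\in S$, thanks to the microscopic regularity hypothesis~\ref{item:microscopic_regularity}. The pointwise convergence of $B^{\gamma,h}_{\rm eff}$, combined with its uniform $L^\infty$ bound from (H1), upgrades to $L^2(S)$ convergence by dominated convergence. With (H1)–(H3) in hand, Theorem~\ref{T:main} directly yields $\Gamma(L^2)$-convergence $\dtenergy \to \widetilde{\mathcal E}^\gamma$ as $(h,H)\to 0$.

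Finally, for the convergence of minimizers $\deform^h_H\in\operatorname{arg\,min}\dtenergy$: by Assumption~\ref{Assumption:BoundaryData}\ref{item:bd_approximability} the set $\cA\cap H^3(S;\R^3)$ is non-empty, so fix any $\tilde\deform$ in it and construct its recovery sequence $\tilde\deform^h_H\in\cA_{H}$ as in step~(3.1) of the proof of Theorem~\ref{T:main}. Then $\dtenergy(\deform^h_H)\leq \dtenergy(\tilde\deform^h_H)$ is uniformly bounded, so the compactness part~\ref{item:compactness} of Theorem~\ref{T:main} provides a (not relabeled) subsequence converging strongly in $H^1(S;\R^3)$ to some $\deform\in\cA$. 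The standard $\Gamma$-convergence argument combining the lower bound~\ref{item:lower_bound} at $\deform$ with the recovery sequence~\ref{item:recovery_sequence} for any competitor identifies $\deform$ as a minimizer of $\widetilde{\mathcal E}^\gamma$. The main obstacle I anticipate is being precise about the uniformity in $h$ of the $W^{1,\infty}$-bound in (H2), since this requires tracking how the quadrature-point evaluations of $\mathbb L$ and $B$ behave on each macroscopic element; everything else is a direct assembly of already-proven ingredients.
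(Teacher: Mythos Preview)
Your proposal is correct and follows precisely the approach the paper intends: the corollary is stated without proof because it is meant as a direct application of Theorem~\ref{T:main} once (H1)--(H3) are checked via the Boundedness/Ellipticity Lemma, the Differentiability Lemma, and Proposition~\ref{prop:approxcor}, exactly as you outline. Your handling of the minimizer convergence via compactness plus the standard $\Gamma$-limit argument is also the expected one.
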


\section*{Acknowledgments}
The authors gratefully acknowledge the support by the Deutsche Forschungs\-gemeinschaft
(Funder DOI: \url{https://dx.doi.org/10.13039/501100001659})
in the Research Unit~3013 \emph{Vector- and Tensor-Valued Surface PDEs} within the sub-project \emph{TP3: Heterogeneous thin structures with prestrain}. The authors thank Martin Rumpf for helpful discussions regarding the quadrature estimate of Lemma~\ref{L:quadp1}.

\appendix

\section{Quadrature error estimates}
\label{sec:appendixA}

In this section we recall the quadrature estimates that we use in this paper.
We note that these estimates require the mesh, the finite element space and
the quadrature rule to satisfy certain conditions, which are all satisfied
in the situations of Section~\ref{sec:MicroNumerics} and Section~\ref{sec:MacroDiscretization}.
For the readers' convenience we briefly comment on these conditions and recall
the quadrature estimates in the specific form that we use in our proofs.

We first discuss the discretization of the microscopic problem introduced in Section~\ref{sec:MicroNumerics}. It consists of a hexahedral mesh $\microgrid$ of the unit cell $\Box$, the space of hexahedral finite elements $\cQ_1(\microgrid)$ and the quadrature operators $\cG^h_K$ for $K\in\microgrid$.
The quadrature operator for an element $K\in\microgrid$ is obtained by an affine transformation from a reference quadrature operator $\cG_{\hat K}[f]\colonequals\sum_{j=1}^l \hat{\omega}_j f(\hat{r}_j)$ for the reference element $\hat{K}\colonequals [0,1]^3$.
Here, $\{\hat{r}_j\}_{j=1,\ldots,l}\subset\hat{K}$ are the quadrature points and $\{\hat{\omega}_j\}_{j=1,\ldots,l}\subset(0,\infty)$ are the associated weights, and it is assumed that:
\begin{enumerate}[label=(\alph*)]
  \item \label{item:quadrature_unisolvency}
  The set $\{\hat{r}_{j}\}_{j=1,\ldots,l}$  contains a $\cQ_1(\hat{K})$-unisolvent subset.
  
  \item \label{item:quadrature_exactness}
  The quadrature rule $(\hat{r}_j,\hat{\omega}_j)$ is exact on $\cQ_1(\hat{K})$.
\end{enumerate}
Since any $K\in\microgrid$ is obtained from $\hat K$ by an affine transformation,
\ref{item:quadrature_exactness} implies that the quadrature rule $\cG_K$ is exact on $\cQ_1(K)$ for all $K\in\microgrid$.

Condition~\ref{item:quadrature_unisolvency} means that there exists a subset $J\subseteq\{1,\ldots,l\}$ such that any function in $\cQ_1(\hat{K})$ is uniquely determined by its values on the quadrature points $\{r_j\}_{j\in J}$. This condition in particular implies that $\cQ_1(\hat K)\ni g\mapsto \cG_{\hat K}[|g|^2]^\frac12$ defines a norm.
Since $\cQ_1(\hat K)$ is finite-dimensional, there exists a constant $\widehat C>0$ such that $\widehat C^{-1}\|g\|_{L^2(\hat K)}\leq \cG_{\hat K}[|g|^2]^\frac12\leq \widehat C\|g\|_{L^2(\hat K)}$ for all $g\in\cQ_1(\hat K)$.
As we assume that the family $\microgrid$ is shape-regular, we infer that 
\begin{equation}\label{eq:equivnorm}
  \forall g\in\cQ_1(K)\,:\qquad\widehat C^{-1}\|g\|_{L^2(K)}\leq \cG_{K}[|g|^2]^\frac12\leq \widehat C\|g\|_{L^2(K)},
\end{equation}
for a $\widehat C$ that can be chosen independently of $K\in\microgrid$ and the mesh size $h$.

Let $E^h_K[f]\colonequals \cG^h_K[f]-\int_K f(y)\dd y$ denote the quadrature error on
a microgrid element $K$.
The quadrature estimates used in this paper are based on two theorems from \textcite{ciarlet2002finite},
namely Theorems~4.1.4 and~4.1.5 there.
We note that these theorems are formulated for triangular mesh elements,
but they also hold for hexahedral elements, which can be checked following \cite[Exercise~4.1.7]{ciarlet2002finite}.

Under the conditions listed above, the theorems of \citeauthor{ciarlet2002finite}
have the following corollaries:

\begin{lemma}[Quadrature errors involving $\cQ_1$ functions]
  There exists a constant $\widehat C$ that only depends on the reference quadrature rule such that for all $K\in\microgrid$, $f\in W^{1,\infty}(K)$
  and $g^h, \hat g^h\in\cQ_1(K)$ we have
  \begin{align}
    \label{eq:quadest0}
    \big|E^h_K[f\partial_ig^h\partial_j\hat g^h]\big| & \leq \widehat C h\|f\|_{W^{1,\infty}(K)}\|\partial_i g^h\|_{L^2(K)}\|\partial_j\hat g^h\|_{L^2(K)},
    \qquad i,j=1,2,\\
    \label{eq:quadest2}
    \big|E^h_K[fg^h]\big| & \leq \widehat C h|K|^{\frac12}\|f\|_{W^{1,\infty}(K)}\|g^h\|_{H^1(K)}.
  \end{align}
\end{lemma}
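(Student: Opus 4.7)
My plan is to reduce both estimates to corresponding bounds on the reference element $\hat K = [0,1]^3$ by standard affine scaling, and then to establish the reference-element bounds via Bramble--Hilbert-type arguments that exploit the $\cQ_1$-exactness and unisolvency of the reference quadrature rule. For each $K \in \microgrid$ I would introduce the affine diffeomorphism $F_K : \hat K \to K$ with scaling factor $h_K$, and pull back $f$, $g^h$, $\hat g^h$ to $\hat f$, $\hat u$, $\hat v$ on $\hat K$, where $\hat u, \hat v \in \cQ_1(\hat K)$. A direct change-of-variables calculation gives $E_K^h[f g^h] = h_K^3 \hat E_{\hat K}[\hat f \hat u]$ and $E_K^h[f \partial_i g^h \partial_j \hat g^h] = h_K \hat E_{\hat K}[\hat f \, \hat \partial_i \hat u \, \hat \partial_j \hat v]$, where the reduction of the Jacobian from $h_K^3$ to $h_K$ in the derivative case is due to the two factors $h_K^{-1}$ coming from the pulled-back partial derivatives. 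Combined with the standard scalings of the $L^2$, $H^1$, and $W^{1,\infty}$ norms and the estimate $h_K \leq h$, the task reduces to proving the reference-element analogues of \eqref{eq:quadest0}--\eqref{eq:quadest2} with constants depending only on the reference quadrature rule.

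For \eqref{eq:quadest2} I would use the Taylor-type splitting $\hat f = c + \hat \rho$, where $c = \hat f(\hat r_0)$ for some fixed $\hat r_0 \in \hat K$ and $\|\hat \rho\|_{L^\infty(\hat K)} \leq C \|\hat \nabla \hat f\|_{L^\infty(\hat K)}$ (the Lipschitz estimate for $W^{1,\infty}$ functions). This yields $\hat E_{\hat K}[\hat f \hat u] = c \, \hat E_{\hat K}[\hat u] + \hat E_{\hat K}[\hat \rho \hat u]$, and the first summand vanishes because $\hat u \in \cQ_1(\hat K)$ and the reference quadrature is exact on $\cQ_1$. The second summand is bounded trivially by $\bigl(|\hat K| + \sum_j \hat\omega_j\bigr) \|\hat \rho\|_{L^\infty} \|\hat u\|_{L^\infty}$, and norm equivalence on the finite-dimensional space $\cQ_1(\hat K)$ gives $\|\hat u\|_{L^\infty(\hat K)} \leq C \|\hat u\|_{H^1(\hat K)}$. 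Scaling back to $K$ then produces \eqref{eq:quadest2}. For \eqref{eq:quadest0} I would proceed analogously: apply the Taylor splitting of $\hat f$ and then invoke Ciarlet's Theorems~4.1.4--4.1.5, using the observation that $\hat \partial_i \hat u$ and $\hat \partial_j \hat v$ lie in fixed finite-dimensional polynomial subspaces of $\cQ_1(\hat K)$ (functions independent of $\hat y_i$, resp.\ $\hat y_j$), so that norm equivalence on those subspaces converts $L^\infty$-bounds on the corresponding factors into the $L^2$-norms appearing on the right-hand side of \eqref{eq:quadest0}.

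The main obstacle is \eqref{eq:quadest0}: the product $\hat \partial_i \hat u \cdot \hat \partial_j \hat v$ is in general not contained in $\cQ_1(\hat K)$, so the simple cancellation used for \eqref{eq:quadest2}---namely that the constant-$\hat f$ contribution vanishes by $\cQ_1$-exactness---is no longer directly available. Resolving this requires the more careful Bramble--Hilbert machinery of \cite[Theorems~4.1.4--4.1.5]{ciarlet2002finite}, which I would adapt to hexahedra using the tensor-product structure of $\hat K$ together with the $\cQ_1$-unisolvency of the quadrature nodes. Once the reference-element bound is established, the remaining steps---collecting the contributions and scaling back to $K$---are routine and yield the claimed uniform constant $\widehat C$ depending only on the reference rule.
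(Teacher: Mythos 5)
Your strategy is essentially the same as the paper's: both ultimately rest on Ciarlet's Theorems~4.1.4 and~4.1.5 and their hexahedral extension in Exercise~4.1.7. The paper's actual proof is a two-line reference to those results (Theorem 4.1.4 with $k=1$ for \eqref{eq:quadest0}; Theorem 4.1.5 with $k=1$, $n=3$, $q=\infty$, $p=g^h$ for \eqref{eq:quadest2}) and says nothing about scaling or reference-element reduction, so in that sense your proposal is a superset of what the paper writes down. Your more elaborate presentation buys a few things: the affine pullback to $\hat K$ and the accompanying exponent bookkeeping (Jacobian $h_K^3$ in the undifferentiated case, $h_K$ in the $\partial_i g^h\partial_j\hat g^h$ case, and the $h_K^{\pm1/2}$, $h_K^{\pm3/2}$ norm scalings) are spelled out and correct, and for \eqref{eq:quadest2} you give a genuinely self-contained proof via the Taylor splitting $\hat f = c + \hat\rho$, $\cQ_1$-exactness to kill the constant part, and finite-dimensional norm equivalence $\|\hat u\|_{L^\infty(\hat K)}\lesssim\|\hat u\|_{H^1(\hat K)}$, which is cleaner than the citation. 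You also correctly flag the genuine subtlety in \eqref{eq:quadest0}: the product $\hat\partial_i\hat u\cdot\hat\partial_j\hat v$ is in general a $\cQ_2$- (not $\cQ_1$-) polynomial, so the constant-$\hat f$ cancellation that drives \eqref{eq:quadest2} does not follow from $\cQ_1$-exactness alone. This observation is hidden inside the paper's citation of Exercise~4.1.7 and is not surfaced there, so naming it is valuable; but you do not actually close it --- you defer to the same Ciarlet machinery that the paper defers to. One minor inconsistency in your writeup: you first say you would handle \eqref{eq:quadest0} ``analogously'' via Taylor splitting plus norm equivalence on the derivative subspaces, and in the very next paragraph explain why that analogous approach cannot go through; the second paragraph is the correct assessment, and the first should be dropped or rephrased.
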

\begin{proof}
  The bound \eqref{eq:quadest0} follows from Theorem~4.1.4 of~\cite{ciarlet2002finite} applied with $k=1$.
  Estimate \eqref{eq:quadest2} follows from Theorem~4.1.5 of~\cite{ciarlet2002finite}, with $k=1$, $n=3$, $q=\infty$, and $p=g^h$.
\end{proof}

Next we discuss the discretization of the macroscopic problem introduced in Section~\ref{sec:MacroDiscretization}.
It consists of the shape-regular family of triangulations $\macrogrid$ of $S$
and quadrature operators $\cG^H_T$ for $T\in\macrogrid$ that are exact on quadratic
polynomials $g\in\mathcal P_2(T)$, cf.~Assumption~\ref{assumption:macroQuadrature}.
Here $E^H_T[f]\colonequals \cG^T_H[f]-\int_T f(s)\dd s$ denotes the quadrature error on $T$.

Under these conditions we have:
\begin{lemma}[Quadrature errors involving $\mathcal P_1$ functions]\label{L:quadp1}
  There exists a constant $\widehat C$ that is independent of $H$,
  such that for all $T \in \macrogrid$, $f\in W^{1,\infty}(T)$
  and $g^h, \hat g^h\in\mathcal P_1(T)$ we have
  \begin{align}
    \label{eq:quadestmac0}
    \big|E^H_T[fg^h\hat g^h]\big| & \leq \widehat C H_T\|f\|_{W^{1,\infty}(T)}\|g^h\|_{L^2(T)}\|\hat g^h\|_{L^2(T)}.
  \end{align}
\end{lemma}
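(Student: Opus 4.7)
The plan is to exploit the exactness of the quadrature rule on quadratic polynomials, together with a suitable reduction by subtracting the mean value of $f$. Since the rule is assumed exact on $\mathcal{P}_2(T)$ (Assumption~\ref{assumption:macroQuadrature}) and $g^h\hat g^h\in\mathcal{P}_2(T)$, any part of $f$ that is constant contributes zero to the error. More precisely, let $\bar f\colonequals \fint_T f\,\dd s$ and split
\begin{equation*}
  E^H_T[fg^h\hat g^h] \;=\; E^H_T\big[(f-\bar f)g^h\hat g^h\big] + \bar f\, E^H_T[g^h\hat g^h].
\end{equation*}
The second term vanishes, so only $E^H_T[(f-\bar f)g^h\hat g^h]$ has to be estimated.

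For that term I would bound the quadrature functional and the exact integral separately by the triangle inequality. Since $T$ is a convex set of diameter $H_T$ and $f\in W^{1,\infty}(T)$, the elementary Lipschitz estimate yields $\|f-\bar f\|_{L^\infty(T)}\le H_T\|\nabla f\|_{L^\infty(T)}\le H_T\|f\|_{W^{1,\infty}(T)}$. Using this, the exact-integral part satisfies $\big|\int_T (f-\bar f)g^h\hat g^h\,\dd s\big|\le H_T\|f\|_{W^{1,\infty}(T)}\|g^h\|_{L^2(T)}\|\hat g^h\|_{L^2(T)}$ by Hölder's inequality. For the quadrature part I would first pull the $L^\infty$-norm of $f-\bar f$ outside, then apply the discrete Cauchy--Schwarz inequality (valid since the weights are non-negative) to get
\begin{equation*}
  \cG^H_T\big[|g^h\hat g^h|\big] \;\le\; \cG^H_T\big[(g^h)^2\big]^{1/2}\cG^H_T\big[(\hat g^h)^2\big]^{1/2}.
\end{equation*}
Because $(g^h)^2$ and $(\hat g^h)^2$ lie in $\mathcal{P}_2(T)$, the quadrature is exact on them and the discrete $L^2$-norms coincide with the continuous $L^2$-norms $\|g^h\|_{L^2(T)}$ and $\|\hat g^h\|_{L^2(T)}$.

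Combining the two bounds absorbs everything into a single constant $\widehat C$ (the factor~$2$ from the triangle inequality). The key structural ingredient on which everything hinges is the double use of $\mathcal{P}_2$-exactness: once to kill the constant part of $f$, and once to identify the discrete $L^2$-norms of $g^h,\hat g^h$ with their continuous counterparts. I expect no serious obstacle: the only subtle point is confirming that the constant can be chosen uniformly in $H$, which follows from shape-regularity of $\macrogrid$ (so that all local rules arise from a single reference rule on $\hat T$ via affine transformation, and the Poincaré/Lipschitz constant on $\hat T$ scales correctly under the affine map). If one prefers a more structural approach, the same result can be obtained by transporting everything to a reference triangle $\hat T$ and applying the Bramble--Hilbert lemma to the functional $f\mapsto \hat E[fg^h\hat g^h]$, which vanishes on constants by $\mathcal{P}_2$-exactness; but the direct argument above is shorter and keeps the dependence on $\|g^h\|_{L^2},\|\hat g^h\|_{L^2}$ explicit.
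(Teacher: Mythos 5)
Your proof is correct and takes essentially the same route as the paper: subtract the mean $\bar f$ of $f$ so that $\mathcal{P}_2$-exactness kills the constant part, bound the remaining error by triangle plus discrete Cauchy--Schwarz (again using $\mathcal{P}_2$-exactness to identify $\cG^H_T[(g^h)^2]$ with $\|g^h\|_{L^2(T)}^2$), and finish with the Lipschitz estimate $\|f-\bar f\|_{L^\infty(T)}\le H_T\|\nabla f\|_{L^\infty(T)}$. The only cosmetic difference is that you split off $\bar f E^H_T[g^h\hat g^h]$ explicitly rather than working directly with $\delta f := f-\bar f$ inside $E^H_T$, but the two are identical after one line.
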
       
\begin{proof}
  Set $\delta f\colonequals f-\frac1{|T|}\int_T f(s)\dd s$. Since the quadrature operator is exact on polynomials in $\mathcal P_2(T)$, we conclude that
  \begin{equation*}
    E^H_T[fg^h\hat g^h]=E^H_T[(\delta f)g^h\hat g^h].
  \end{equation*}
  Furthermore, by the triangle inequality and Cauchy--Schwarz inequality,
  \begin{align*}
    \big|E^H_T[(\delta f)g^h\hat g^h]\big| &\leq
    \|\delta f\|_{L^\infty(T)}\left(\cG^T_H[(g^h)^2]^\frac12\cG^T_H[(\hat g^h)^2]^\frac12+\|g^h\|_{L^2(T)}\|\hat g^h\|_{L^2(T)}\right)\\
    &\leq
    2\|\delta f\|_{L^\infty(T)}\|g^h\|_{L^2(T)}\|\hat g^h\|_{L^2(T)}.
  \end{align*}
  The claimed estimate now follows from the bound
  \begin{equation*}
    \forall s'\in T\,:\qquad |\delta f(s')|\leq \frac{1}{|T|}\int_T|f(s')-f(s)|\dd s\leq H_T\|\nabla f\|_{L^\infty(T)}.
    \qedhere
  \end{equation*}
\end{proof}

\printbibliography

\end{document}